\documentclass{amsart}
\usepackage[left=1.2in,right=1.2in,tmargin = 25mm,bmargin = 25mm]{geometry}
\usepackage[dvipsnames]{xcolor}
\usepackage[hypertexnames = false,
            colorlinks = true,
            linkcolor = MidnightBlue,
            urlcolor  = NavyBlue,
            citecolor = RedViolet,
            anchorcolor = JungleGreen]{hyperref}
\usepackage{amsthm}
\usepackage{thmtools}
\usepackage{cleveref}
\usepackage{tgpagella} 
\usepackage[utf8x]{inputenc}
\usepackage[T1]{fontenc}
\usepackage{graphicx} 
\usepackage{bm}
\usepackage{wrapfig} 
\usepackage{pgf} 
\usepackage{tikz-cd}
\usetikzlibrary{patterns}
\usepackage{bbm} 
\usepackage{fancyvrb} 
\usepackage{bm}
\usepackage{stmaryrd} 
\usepackage{comment}
\usepackage{ bbold }
\usepackage{enumitem}
\usepackage[main=english,greek]{babel}

\theoremstyle{definition}
\newtheorem{definition}{Definition}[section]
\newtheorem{remark}[definition]{Remark}

\newtheorem{example}[definition]{Example}

\theoremstyle{plain}
\newtheorem{proposition}[definition]{Proposition}
\newtheorem{lemma}[definition]{Lemma}
\newtheorem{theorem}[definition]{Theorem}
\newtheorem{corollary}[definition]{Corollary}

\newcommand{\CC}{\mathcal{C}}
\newcommand{\DD}{\mathcal{D}}

\newcommand{\HH}{\mathcal{H}}
\newcommand{\II}{\mathcal{I}}

\newcommand{\RR}{\mathcal{R}}

\newcommand{\N}{\mathbb{N}}

\newcommand{\Unit}{\mathsf{Unit}}
\newcommand{\tp}{\mathsf{tp}}
\newcommand{\Tm}{\mathsf{Tm}}
\newcommand{\Ty}{\mathsf{Ty}}
\newcommand{\lam}{\mathsf{lam}}
\newcommand{\unlam}{\mathsf{unlam}}
\newcommand{\fst}{\mathsf{fst}}
\newcommand{\fstProj}{\mathsf{fstProj}}
\newcommand{\snd}{\mathsf{snd}}
\newcommand{\sndProj}{\mathsf{sndProj}}
\newcommand{\refl}{\mathsf{refl}}
\newcommand{\Id}{\mathsf{Id}}

\newcommand{\var}{\mathsf{var}}
\newcommand{\compDom}{\mathsf{compDom}}
\newcommand{\pcomp}{\triangleright}
\newcommand{\pair}{\mathsf{pair}}

\newcommand{\Psh}{\mathsf{Psh}}
\newcommand{\sSet}{\mathsf{sSet}}
\newcommand{\iso}{\cong}
\newcommand{\id}{\mathsf{id}}
\newcommand{\op}{\mathsf{op}}
\newcommand{\Cat}{\mathsf{Cat}}
\newcommand{\Grpd}{\mathsf{Grpd}}
\newcommand{\Top}{\mathsf{Top}}

\renewcommand{\gg}{\mathbin{\text{\normalfont{»}}}}

\newcommand{\unit}{\mathsf{unit}}
\newcommand{\counit}{\mathsf{counit}}

\newcommand{\done}[1]{\href{#1}{$\boxbar$}}
\newcommand{\mathlib}[1]{\href{#1}{$\boxbar$}}

\title{Polynomial functors in $\pi$-clans
  for the semantics of type theory}
\author{Joseph Hua, Yiming Xu}
\date{}

\begin{document}
\begin{abstract}
  The category of contexts underlying a model of
  Martin-L\"of type theory with $\Unit$-, $\Sigma$-, and $\Pi$-types need not be locally Cartesian closed,
  but is necessarily a $\pi$-clan.
  We exploit this $\pi$-clan structure to build the theory of polynomial functors.
  This paper presents two equivalent notions of strict semantics for MLTT in this weaker setting,
  respectively \emph{elementary} models --
  reformulating categories with families (CwFs) --
  and \emph{algebraic} models -- reformulating natural models.
  These components fit into a practical sequence of steps for constructing models of MLTT: building an elementary model,
  extracting a $\pi$-clan from the elementary model,
  and then using polynomial functors built on the $\pi$-clan structure
  to convert the elementary model into an algebraic one.
\end{abstract}

\maketitle

\tableofcontents

\section{Introduction}
\subsection{Overview}
In this paper, we develop the theory of polynomial functors in $\pi$-clans \cite{joyal2017},
and investigate their role in the semantics of Martin-L\"of type theory (MLTT). 
We define two notions of strict semantics
(judgmentally equal types are interpreted as equal objects),
respectively called \emph{elementary} and \emph{algebraic} semantics.
The latter makes extensive use of polynomial functors.
We construct translations between the two notions,
showing that a model of MLTT can be presented in either style.

The theory of polynomial functors (or \emph{containers})
forms a general framework for
working with parametrized data in an abstract and organized way.
The diverse applications of this theory 
otherwise unaddressed in this paper
include the semantics of W-types \cite{dybjer1997} \cite{moerdijk2000},
generic programming \cite{abbott2003},
dynamical systems \cite{niu2024},
group cohomology \cite{tambara1993},
Witt vectors and cobordism \cite{brun2005} \cite{brun2007} \cite{bisson2025}, to name but a few.
Polynomial functors play a key role in Awodey's natural models \cite{awodey2018}.

Polynomial functors are most well-studied in
locally Cartesian closed (LCC) categories \cite{gambino2013}:
if $\CC$ is LCC, each map $f:E\to B$ in $\CC$
acts as the signature for a polynomial functor
$P_f : \CC \to \CC$.
Weber \cite{weber2015} generalizes this theory to work
in a category with pullbacks,
where polynomial signatures (\Cref{def:poly}) are required to be exponentiable maps.
This generalization admits examples such as $\Cat$ and $\Top$,
neither of which is LCC.
However,
an initial model of MLTT (with $\Unit$-, $\Sigma$- and $\Pi$-types)
is \emph{not} an example
of Weber's generalization:
context extensions $\Gamma . A \to \Gamma$ are only
exponentiable \emph{relative to other context extensions}.
This leads us to consider \emph{$\pi$-clans}
as an appropriate setting for polynomials.

The notion of a $\pi$-clan can be traced back to Paul Taylor \cite{taylor1987},
who called them \emph{classes of displays},
and later Joyal \cite{joyal2017},
from whom the terminology originates.
Both works justify $\pi$-clans as the \emph{minimal} categorical structure
required for a weak interpretation
(judgmentally equal types are interpreted as isomorphic objects)
of MLTT with $\Unit$-, $\Sigma$-, and $\Pi$-types.
Our work deals with strict models,
which, in particular are weak
and necessarily determine a $\pi$-clan.

In this paper, we define \emph{elementary semantics},
which are similar to CwFs \cite{clairambault2011}
in that the description of type formers closely resembles
a syntactic presentation of MLTT.
Each judgment corresponds to some explicit semantic fact
about arrows in the category of contexts.
As such, writing an interpretation function for MLTT syntax into elementary semantics is as straightforward as one could hope\footnote{
The domain specific language, interpretation function,
and certifying typechecker presented in SynthLean \cite{nawrocki2026}
has now been modified to target elementary semantics
instead of natural models.
In this paper, we will only discuss semantics
and not present this aspect of the overall project.
}.
Elementary semantics do not require a theory of polynomial functors
or $\pi$-clans.

On the other hand, \emph{algebraic semantics} are 
a reformulation of natural models:
admissibility of each type former can be concisely expressed
as the existence of a pullback square\footnote{
  $\Id$-types are not in fact expressed as a pullback square,
  but this is addressed by using $\mathsf{Path}$-types in \cite{awodey2026}.
} corresponding to a cartesian natural transformation between polynomial functors.
To reap the distinct benefits of elementary and algebraic semantics,
we provide translations back and forth between the two.
These results validate the correctness of our definition of algebraic models,
which is the main contribution of this paper.

\subsection{Formalization and design considerations}
This paper is part of the wider HoTTLean project \cite{hua2025},
where a domain-specific language for MLTT is built in Lean
and applied to the groupoid model.
As such, many of our results are formalized in Lean.
The definitions and theorems with formalized counterparts
are labeled with symbols $\boxbar$,
which contain embedded links 
to code in the HoTTLean repository and Mathlib \cite{mathlib2020}.
The HoTTLean repository can be found at
\begin{center}
\href{https://github.com/sinhp/HoTTLean/tree/TYPES2026}{\textsf{github.com/sinhp/HoTTLean/tree/TYPES2026}}
\end{center}
The original goal of this Lean library
was to simplify the formulation of the groupoid model,
as part of the HoTTLean project.
We originally defined the groupoid model using the theory of
natural models.
After experimenting with various setups,
including using Uemura's representable map categories \cite{uemura2023},
we settled on our current approach.
We outline our reasons for this decision in the following.
\begin{itemize}
  \item Let $\CC$ denote a category of contexts.
    HoTTLean is designed to handle multiple small
    (meaning in $\CC$) universes.
    Also adding a large (meaning in $\Psh(\CC)$ but not $\CC$)
    classifier of all types --
    as is the approach in natural models --
    creates an additional burden for the user.
    Our first observation is that one can get away
    with only working with small universes.
    Our second observation is that when doing so,
    one ends up working only with representable presheaves.
    This requires a proof that representables are closed under
    polynomial operations,
    and the theory of polynomials in $\pi$-clans can be seen as
    a systematic way of proving these closure conditions.
    Finally, our work simplifies expressions
    by doing away with
    presheaves and Yoneda embeddings,
    instead working directly in $\CC$.
  \item The move from $\Psh(\CC)$ to $\CC$ 
    brings us closer to an \emph{internal language} for $\CC$.
    Indeed, HoTTLean aims to build a domain-specific language
    for reasoning about the objects of $\CC$,
    rather than $\Psh(\CC)$.
  \item Another advantage of working in $\CC$
    over $\Psh(\CC)$ is in dealing with issues of size.
    Lean's universe level calculus for $\Psh(\CC)$ involves
    two universe variables for $\CC$ (one for the objects and one for the maps) and another for the category
    $\mathsf{Type} \, u$ that the presheaves take values in,
    and these three variables are constrained in certain ways
    that the user needs to determine.
  \item For the groupoid \cite{hofmann1995} and simplicial \cite{kapulkin2021} models of MLTT,
    Uemura's representable map categories provide a sufficiently convenient
    framework.
    Indeed, $\sSet$ is LCC and
    (split) isofibrations are exponentiable in
    $\Grpd$ \cite{vidmar2018}.
    However, to work with the initial model of MLTT
    in Uemura's setting,
    one would have to again take presheaves
    since fibrations are not exponentiable maps.
\end{itemize}
\subsection{Contributions}
Our main contributions are as follows:
\begin{itemize}
    \item We develop the theory of polynomial functors in $\pi$-clans, which is more general than the typical LCC setting.
    \item We apply our theory of polynomial functors in $\pi$-clans
    to the semantics of MLTT, 
    namely, to provide an algebraic presentation of the type formers in the style of Martin-L\"of algebras \cite{awodey2025}.
    \item We formalize the results from this paper that are relevant 
    to the HoTTLean project.
\end{itemize}

\subsection{Structure of the paper}
The paper is structured as follows. 
Starting from \Cref{sec:UnstrSem},
we define \emph{elementary semantics}.
In \Cref{sec:Poly},
we define $\pi$-clans and present polynomial functors in this setting.
Using polynomial functors,
in \Cref{sec:StrSem} we define \emph{algebraic semantics}.
Finally, we define translations back and forth between elementary
and algebraic semantics in \Cref{sec:Translation}.

\subsection{Acknowledgment}
We are indebted to Steve Awodey for suggesting this
line of inquiry in the first place,
and for his continued guidance and insight throughout.
We have also benefitted greatly from discussions with
Mario Carneiro, Sina Hazratpour,
Wojciech Nawrocki, Shuge Rong and Spencer Woolfson from the
HoTTLean project,
with whom we have put these ideas to the test.
We also thank Reid Barton for discussions on
further generalizations of our work.
This material is based on work supported by the Air Force Office of Scientific Research under MURI award number FA9550-21-1-0009.
The second author, Yiming Xu, is co-funded by the European Union (ERC, Nekoka, 101083038).
 Views and opinions expressed are however those of the authors only and do not
necessarily reflect those of the European Union or the European Research Council. Neither
the European Union nor the granting authority can be held responsible for them.

\section{Elementary semantics}\label{sec:UnstrSem}
In the following,
we fix a category $\CC$ for interpreting contexts as objects and
substitutions as morphisms.

\begin{definition}[\done{https://github.com/sinhp/HoTTLean/blob/TYPES2026/HoTTLean/Model/Unstructured/UnstructuredUniverse.lean\#L19}]
  A \emph{universe}
  in $\CC$ consists of a morphism
  $\tp : \Tm \to \Ty$
  and a chosen pullback $\Gamma . A$
  for each object $\Gamma$ and morphism $A : \Gamma \to \Ty$.
  \[\begin{tikzcd}
	{\Gamma.A} & \Tm \\
	\Gamma & \Ty
	\arrow["\var_A", from=1-1, to=1-2]
	\arrow["d_A", swap, from=1-1, to=2-1]
	\arrow["\lrcorner"{anchor=center, pos=0.125}, draw=none, from=1-1, to=2-2]
	\arrow["\tp", from=1-2, to=2-2]
	\arrow["A"', from=2-1, to=2-2]
  \end{tikzcd}\]
  We will view $A : \Gamma \to \Ty$,
  as a type in context $\Gamma$,
  and view the chosen pullback $\Gamma . A$
  as context extension $d_A : \Gamma . A \to \Gamma$.
  We also view a morphism $a : \Gamma \to \Tm$
  satisfying $a \gg \tp = A$ as a term of type $A$.\footnote{
    To match the formalization in Lean,
    we adopt a notation style similar to that used in the Lean library.
    In particular,
    the order of morphism composition will follow the ``arrows'' convention, denoted $f \gg g$ rather than $g \circ f$.
  }
\end{definition}

One can view $\tp : \Tm \to \Ty$ as the context extension
$d : (X : \Ty.\,x : X) \to (X : \Ty)$,
by taking the type $A : \Ty \to \Ty$ to be the identity
$\id_\Ty : \Ty \to \Ty$.

\begin{remark}
  In the following,
  suppose $\CC$ is a category of contexts for the initial model of MLTT,
  namely the syntactic category of contexts and substitutions.
  We compare natural models in $\Psh(\CC)$ with
  universes in $\CC$.
  
  Natural models $\dot{U} \to U$ are allowed to be ``large'',
  where the base $U$ may not be represented by an object in $\CC$.
  Consequently,
  $U$ does not necessarily correspond to any
  small universe from the syntax.
  This also means that a natural model can classify
  \emph{all dependent families} in $\CC$.
  On the other hand,
  a universe $\Tm \to \Ty$
  in the category of contexts must originate from some small universe appearing in the syntax,
  and can only classify ``small'' dependent families.

  A natural model is assumed to be a ``representable'' natural transformation,
  meaning that whenever $\Gamma$ is a representable presheaf,
  $\Gamma . A$ is representable, too.
  This ensures that contexts are closed under the operation of context extension.
  In our setting,
  we place no such condition on a universe,
  since all objects are contexts.
\end{remark}

\begin{definition}[\done{https://github.com/sinhp/HoTTLean/blob/TYPES2026/HoTTLean/Model/Unstructured/UnstructuredUniverse.lean\#L98}]
  For $\sigma : \Delta \to \Gamma$,
  a type $A : \Gamma \to \Ty$ and a term $a : \Delta \to \Tm$
  such that $a \gg \tp = \sigma \gg A$,
  we denote map into the context extension
  using the notation
  $\sigma . a : \Delta \to \Gamma . A$,
  induced by the pullback
\[\begin{tikzcd}
	\Delta \\
	& {\Gamma.A} & \Tm \\
	& \Gamma & \Ty
	\arrow["{\sigma.a}"{description}, dashed, from=1-1, to=2-2]
	\arrow["a", bend left, from=1-1, to=2-3]
	\arrow["\sigma"', bend right, from=1-1, to=3-2]
	\arrow[from=2-2, to=2-3]
	\arrow[from=2-2, to=3-2]
	\arrow["\lrcorner"{anchor=center, pos=0.125}, draw=none, from=2-2, to=3-3]
	\arrow["\tp", from=2-3, to=3-3]
	\arrow["A"', from=3-2, to=3-3]
\end{tikzcd}\]
\end{definition}

\begin{definition}[$\Unit$-types] \label{def:elementary-unit}
  An \emph{elementary $\Unit$-type structure} on a universe
  $\tp : \Tm \to \Ty$
  consists of the following
  \begin{enumerate}
  \item
    For any context $\Gamma$, there is a type $\Unit_\Gamma : \Gamma \to \Ty$.
  \item
    The construction $\Unit_\Gamma$ is stable under substitution,
    meaning that for any $\sigma : \Delta \to \Gamma$ we have
    \[
      \sigma \gg \Unit_\Gamma = \Unit_\Delta
    \]
  \item
    For any context $\Gamma$, there is a unique map $\unit_\Gamma : \Gamma \to \Tm$
    satisfying $\unit_\Gamma \gg \tp = \Unit_\Gamma$.
  \end{enumerate}
\end{definition}

\begin{lemma} \label{lem:elementary-unit-substitution-stable}
  The construction $\unit_\Gamma$ is stable under substitution,
  meaning that for any $\sigma : \Delta \to \Gamma$ we have
  \[ \sigma \gg \unit_\Gamma = \unit_\Delta \]
\end{lemma}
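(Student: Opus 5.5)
The plan is to use the uniqueness clause (3) of \Cref{def:elementary-unit} at the context $\Delta$. That clause says $\unit_\Delta$ is the \emph{only} map $\Delta \to \Tm$ whose composite with $\tp$ equals $\Unit_\Delta$. So it suffices to show that the composite $\sigma \gg \unit_\Gamma : \Delta \to \Tm$ also has this property; uniqueness then forces $\sigma \gg \unit_\Gamma = \unit_\Delta$.

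The key computation uses associativity of composition (in the diagrammatic order $\gg$), then the typing equation $\unit_\Gamma \gg \tp = \Unit_\Gamma$ from clause (3) at $\Gamma$, and finally stability of $\Unit$ under substitution from clause (2):
\[
  (\sigma \gg \unit_\Gamma) \gg \tp
  = \sigma \gg (\unit_\Gamma \gg \tp)
  = \sigma \gg \Unit_\Gamma
  = \Unit_\Delta .
\]
Hence $\sigma \gg \unit_\Gamma$ is a term of type $\Unit_\Delta$. By the uniqueness in clause (3) at $\Delta$, it coincides with $\unit_\Delta$.

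There is no real obstacle. The only point needing care is to read clause (3) as asserting both existence and uniqueness of $\unit_\Gamma$ for each context separately. It is the uniqueness at $\Delta$, not the existence at $\Gamma$, that carries the argument, and clause (2) is exactly what is needed to match the two types.
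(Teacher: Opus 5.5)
Your proof is correct and is essentially the paper's own argument. You compute $\sigma \gg \unit_\Gamma \gg \tp = \sigma \gg \Unit_\Gamma = \Unit_\Delta$ and then conclude by the uniqueness of $\unit_\Delta$ among maps $\Delta \to \Tm$ of type $\Unit_\Delta$, which is clause (3) of the definition applied at $\Delta$.
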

\begin{proof}
  This automatically follows from uniqueness of $\unit_\Delta$
  as a map satisfying $\unit_\Delta \gg \tp = \Unit_\Delta$:
  \[
    \sigma \gg \unit_\Gamma \gg \tp
    = \sigma \gg \Unit_\Gamma
    = \Unit_\Delta
  \]
\end{proof}

\begin{definition}[$\Pi$-types, \done{https://github.com/sinhp/HoTTLean/blob/TYPES2026/HoTTLean/Model/Unstructured/UnstructuredUniverse.lean\#L332}] \label{def:elementary-pi}
  An \emph{elementary $\Pi$-type structure}
  on a universe $\tp : \Tm \to \Ty$
  consists of the following
  \begin{enumerate}
  \item 
    For any context $\Gamma$, type $A : \Gamma \to \Ty$,
    and dependent type $B : \Gamma . A \to \Ty$
    there is a type $\Pi_A B : \Gamma \to \Ty$.
  \item
    The construction $\Pi_A B$ is stable under substitution,
    meaning that for any $\sigma : \Delta \to \Gamma$ we have
    \[
      \Pi_{\sigma \gg A} (\tilde{\sigma} \gg B) =
      \sigma \gg \Pi_{A} B
    \]
    where $\tilde{\sigma} := (d_{\sigma \gg A} \gg \sigma) . \var_{\sigma \gg A}$ as in
    \[
    \begin{tikzcd}
      {\Delta . {(\sigma \gg A)}} & {\Gamma . A} & \Tm \\
      \Delta & \Gamma & \Ty
      \arrow["{{\tilde{\sigma}}}", dashed, from=1-1, to=1-2]
      \arrow["{\var_{\sigma \gg A}}", bend left, from=1-1, to=1-3]
      \arrow["{d_{\sigma \gg A}}", from=1-1, to=2-1]
      \arrow["{\var_A}", from=1-2, to=1-3]
      \arrow["{d_A}", from=1-2, to=2-2]
      \arrow["\lrcorner"{anchor=center, pos=0.125}, draw=none, from=1-2, to=2-3]
      \arrow[from=1-3, to=2-3]
      \arrow["\sigma"', from=2-1, to=2-2]
      \arrow["A"', from=2-2, to=2-3]
    \end{tikzcd}
    \]
  \item
    Under the assumptions of (1),
    for any $b : \Gamma . A \to \Tm$
    such that $b \gg \tp = B$,
    there is a map $\lam \, b : \Gamma \to \Tm$
    satisfying $\lam \;b \gg \tp = \Pi_A B$.
  \item
    The construction $\lam \, b$ is stable under substitution.
    For any $\sigma : \Delta \to \Gamma$
    \[
      \lam (\tilde{\sigma} \gg b) =
      \sigma \gg \lam \, b
    \]
  \item
    Under the assumptions of (1),
    for any $f : \Gamma \to \Tm$
    such that $f \gg \tp = \Pi_A B$,
    there is a map $\unlam \, f : \Gamma . A \to \Tm$
    satisfying $\unlam \, f \gg \tp = B$.
  \item ($\beta$- and $\eta$-rules) The operations 
    $\lam$ and $\unlam$ are inverses of one another.
    \[ \unlam (\lam \, b) = b \quad \text{ and } 
    \quad \lam (\unlam \, f) = f\]
  \end{enumerate}
\end{definition}

\begin{lemma}[\done{https://github.com/sinhp/HoTTLean/blob/TYPES2026/HoTTLean/Model/Unstructured/UnstructuredUniverse.lean\#L360}]
Stability of $\unlam$ can be derived from $\lam$ being an inverse
that is stable under substitution.
\end{lemma}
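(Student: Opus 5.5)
The claim to prove is: for $\sigma : \Delta \to \Gamma$ and $f : \Gamma \to \Tm$ with $f \gg \tp = \Pi_A B$,
\[ \unlam(\sigma \gg f) = \tilde{\sigma} \gg \unlam\, f. \]
Here $\unlam(\sigma \gg f)$ is taken with respect to the substituted data $\sigma \gg A$ and $\tilde{\sigma} \gg B$. The plan is to reduce this to the stability of $\lam$ (item (4) of \Cref{def:elementary-pi}) by applying $\lam$ to both sides and then using that $\lam$ is injective, since it has a left inverse $\unlam$ by the $\beta$-rule.

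First I check that both sides are well-typed, so that $\lam$ can be applied to them in the context $\Delta . (\sigma \gg A)$. By stability of $\Pi$ (item (2)), $\sigma \gg f \gg \tp = \sigma \gg \Pi_A B = \Pi_{\sigma \gg A}(\tilde{\sigma} \gg B)$. Hence $\unlam(\sigma \gg f)$ is defined, and by item (5) it lies over $\tilde{\sigma} \gg B$. Also $\tilde{\sigma} \gg \unlam\, f \gg \tp = \tilde{\sigma} \gg B$, using $\unlam\, f \gg \tp = B$. So both sides are terms of the dependent type $\tilde{\sigma} \gg B$ over $\Delta . (\sigma \gg A)$.

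Next I apply $\lam$ to both sides:
\[
  \lam(\tilde{\sigma} \gg \unlam\, f) = \sigma \gg \lam(\unlam\, f) = \sigma \gg f = \lam(\unlam(\sigma \gg f)).
\]
The first equality is stability of $\lam$ (item (4)) applied to $b = \unlam\, f$. The second and third equalities are the $\eta$-rule, used at $\Gamma$ and at $\Delta$ respectively.

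Finally I apply $\unlam$ to both ends of this chain and use the $\beta$-rule $\unlam(\lam\, b) = b$ to cancel. This gives $\tilde{\sigma} \gg \unlam\, f = \unlam(\sigma \gg f)$, as required.

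The only delicate point is bookkeeping rather than mathematics. The operations $\lam$ and $\unlam$ are indexed by the data $(\Gamma, A, B)$, so I must make sure the $\lam$ on the left of the chain (coming from item (4)) is the one for $(\Delta, \sigma \gg A, \tilde{\sigma} \gg B)$. I must also make sure the typing equation $\sigma \gg f \gg \tp = \Pi_{\sigma \gg A}(\tilde{\sigma} \gg B)$ is exactly the hypothesis $\unlam$ requires, which is an instance of item (2). In a dependently typed formalization this may need a transport along that equality, but it poses no conceptual obstacle.
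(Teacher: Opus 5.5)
Your proof is correct and is essentially the paper's argument. The paper rewrites $f = \lam(\unlam\, f)$ by $\eta$, pushes $\sigma$ inside by stability of $\lam$, and cancels with $\beta$, giving $\unlam(\sigma \gg f) = \unlam(\lam(\tilde{\sigma} \gg \unlam\, f)) = \tilde{\sigma} \gg \unlam\, f$ directly. Your detour through $\lam(\unlam(\sigma \gg f))$ (an $\eta$ at $\Delta$ that you later undo with $\beta$) is harmless but unnecessary, since applying $\unlam$ to $\lam(\tilde{\sigma} \gg \unlam\, f) = \sigma \gg f$ already suffices.
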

\begin{proof}
\begin{align*}
  & \unlam (\sigma \gg f) \\
  = \, & \unlam (\sigma \gg \lam (\unlam \, f)) \\
  = \, & \unlam (\lam (\tilde{\sigma} \gg \unlam \, f)) \\
  = \, & \tilde{\sigma} \gg \unlam \, f
\end{align*}
\end{proof}

\begin{lemma}\label{lem:substituted-lam}
  Suppose $\tp$ has elementary $\Pi$-types,
  and fix maps $A : \Gamma \to \Ty$,
  $B : \Gamma . A \to Ty$ and $\sigma : \Delta \to \Gamma$.
  There is a bijection between commutative squares of the following form
  \begin{equation}
    \begin{tikzcd}
    {\Delta . (\sigma \gg A)} & \Tm && \Delta & \Tm \\
    {\Gamma . A} & \Ty && \Gamma & \Ty
    \arrow["b", from=1-1, to=1-2]
    \arrow["{\tilde \sigma}"', from=1-1, to=2-1]
    \arrow["\tp", from=1-2, to=2-2]
    \arrow["f", from=1-4, to=1-5]
    \arrow["\sigma"', from=1-4, to=2-4]
    \arrow["\tp", from=1-5, to=2-5]
    \arrow["B"', from=2-1, to=2-2]
    \arrow["{\Pi_A B}"', from=2-4, to=2-5]
  \end{tikzcd}
  \end{equation}
  given by $b \mapsto \lam \, b$ and $\unlam \, f \mapsfrom f$.
\end{lemma}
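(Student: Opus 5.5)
The plan is to identify both sets of squares with hom-sets of terms in the context $\Delta$, and then transport the bijection of \Cref{def:elementary-pi} (6) along substitution stability. A square on the left is a map $b : \Delta.(\sigma \gg A) \to \Tm$ with $b \gg \tp = \tilde\sigma \gg B$. A square on the right is a map $f : \Delta \to \Tm$ with $f \gg \tp = \sigma \gg \Pi_A B$.

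First, rewrite the target types using stability of $\Pi$ from \Cref{def:elementary-pi} (2):
\[ \sigma \gg \Pi_A B = \Pi_{\sigma \gg A}(\tilde\sigma \gg B). \]
Hence right-hand squares are exactly terms $f : \Delta \to \Tm$ of type $\Pi_{\sigma\gg A}(\tilde\sigma\gg B)$ in context $\Delta$. Likewise, left-hand squares are exactly terms $b$ of type $\tilde\sigma \gg B$ in context $\Delta.(\sigma\gg A)$. This puts us in the situation of \Cref{def:elementary-pi} with the context $\Delta$, the type $\sigma \gg A : \Delta \to \Ty$, and the dependent type $\tilde\sigma \gg B : \Delta.(\sigma\gg A) \to \Ty$.

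Next, apply (3) and (5) to these data. Item (3) sends $b$ to $\lam\, b$ with $\lam\, b \gg \tp = \Pi_{\sigma\gg A}(\tilde\sigma\gg B) = \sigma \gg \Pi_A B$, which gives a right square. Item (5) sends $f$ to $\unlam\, f$ with $\unlam\, f \gg \tp = \tilde\sigma \gg B$, which gives a left square. Finally, (6) says the two maps are mutually inverse, so they form a bijection.

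The only subtlety is bookkeeping. The $\lam$ and $\unlam$ in the statement must be read as the operations at $\Delta$ with the substituted type and family, not at $\Gamma$. The equality in (2) has to be used to retype $f$, which is harmless because we are working with equality of morphisms rather than transport along a type. I expect no real obstacle beyond stating this instantiation correctly.
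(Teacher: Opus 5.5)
Your proposal is correct and matches the paper's proof: you retype $f$ using the stability equation $\sigma \gg \Pi_A B = \Pi_{\sigma\gg A}(\tilde\sigma\gg B)$, apply $\lam$ and $\unlam$ at $\Delta$ with $\sigma\gg A$ and $\tilde\sigma\gg B$, and conclude from the $\beta$- and $\eta$-rules that the two maps are mutually inverse. Your remark that $\lam$ and $\unlam$ are taken at $\Delta$ rather than $\Gamma$ is also how the paper's typing computation reads them.
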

\begin{proof}
  We first check that
  $\lam (b)$ is correctly typed
  \[
    \lam (b) \gg \tp = \Pi_{\sigma \gg A} (\tilde \sigma \gg B)
    = \sigma \gg \Pi_A B
  \]
  Conversely, since $f$ is of the type
  \[f \gg \tp = \sigma \gg \Pi_A B = \Pi_{\sigma \gg A} (\tilde \sigma \gg B)\]
  we have $\unlam (f) \gg \tp = \tilde \sigma \gg B$.
  It follows from the $\beta$- and $\eta$-rules that these are inverses of each other.
\end{proof}

\begin{definition}[$\Sigma$-types, \done{https://github.com/sinhp/HoTTLean/blob/TYPES2026/HoTTLean/Model/Unstructured/UnstructuredUniverse.lean\#L219}]
  An \emph{elementary $\Sigma$-type structure}
  on a universe $\tp : \Tm \to \Ty$
  consists of the following
  \begin{enumerate}
  \item 
    For any context $\Gamma$, type $A : \Gamma \to \Ty$,
    and dependent type $B : \Gamma . A \to \Ty$
    there is a type $\Sigma_A B : \Gamma \to \Ty$.
  \item
    The construction $\Sigma_A B$ is stable under substitution,
    meaning that for any $\sigma : \Delta \to \Gamma$ we have
    \[
      \Sigma_{\sigma \gg A} (\tilde{\sigma} \gg B) =
      \sigma \gg \Sigma_{A} B
    \]
    where $\tilde{\sigma}$ is the same as in \Cref{def:elementary-pi}.
  \item
    Under the assumptions of (1),
    for any $a : \Gamma \to \Tm$ and $b:\Gamma\to \Tm$
    such that $a \gg \tp = A$ and $b \gg \tp = \id_\Gamma . a \gg B$,
    there is a map $\pair (a,b) : \Gamma \to \Tm$
    satisfying $\pair (a,b) \gg \tp = \Sigma_A B$. 
    \[
\begin{tikzcd}
	\Gamma \\
	& {\Gamma . A} & \Tm \\
	& \Gamma & \Ty
	\arrow["{\id_\Gamma . a}"{description}, from=1-1, to=2-2]
	\arrow["a", bend left, from=1-1, to=2-3]
	\arrow["{\id_\Gamma}"', bend right, from=1-1, to=3-2]
	\arrow[from=2-2, to=2-3]
	\arrow[from=2-2, to=3-2]
	\arrow["\tp", from=2-3, to=3-3]
	\arrow["A"', from=3-2, to=3-3]
\end{tikzcd}
    \]
  \item
    The construction $\pair (a, b)$ is stable under substitution.
    For any $\sigma : \Delta \to \Gamma$
    \[
      \pair (\sigma \gg a,\sigma \gg b) =
      \sigma \gg \pair(a , b)
    \]
  \item
    Under the assumptions of (1),
    for any $s : \Gamma \to \Tm$
    such that $s \gg \tp = \Sigma_A B$,
    there are maps $\fst \, s : \Gamma  \to \Tm$
    and $\snd \, s : \Gamma  \to \Tm$
    satisfying $\fst\; s\gg \tp = A$
    and $\snd \; s\gg \tp = (\id_\Gamma . \fst\; s) \gg B$.
  \item ($\beta$-rules) The operations 
    $\fst$ and $\snd$ deconstruct a term formed using $\pair$.
    \[ \fst \;(\pair(a, b)) = a \quad \text{ and } 
    \quad \snd \; (\pair (a, b)) = b\]
  \item ($\eta$-rule) Conversely, the operation $\pair$
    reconstructs the term split up by $\fst$ and $\snd$.
  \[\pair (\fst \;s, \snd \;s) = s\]
  \end{enumerate}
\end{definition}

\begin{lemma}[\done{https://github.com/sinhp/HoTTLean/blob/TYPES2026/HoTTLean/Model/Unstructured/UnstructuredUniverse.lean\#L318},
\done{https://github.com/sinhp/HoTTLean/blob/TYPES2026/HoTTLean/Model/Unstructured/UnstructuredUniverse.lean\#L324}]
Stability of $\fst$ and $\snd$
under substitution can be derived from stability of $\pair$ under substitution
and the $\beta$- and $\eta$-rules.
\end{lemma}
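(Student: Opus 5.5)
We mirror the argument used for stability of $\unlam$: rewrite the term using the $\eta$-rule, push the substitution inside $\pair$ by its stability, and finish with the $\beta$-rules. Fix $\sigma : \Delta \to \Gamma$, $A : \Gamma \to \Ty$, $B : \Gamma . A \to \Ty$, and $s : \Gamma \to \Tm$ with $s \gg \tp = \Sigma_A B$. We want to show
\[ \fst(\sigma \gg s) = \sigma \gg \fst\, s \quad\text{and}\quad \snd(\sigma \gg s) = \sigma \gg \snd\, s, \]
where on the left $\fst$ and $\snd$ are taken with respect to $\sigma \gg A$ and $\tilde\sigma \gg B$. This typing is legitimate because
\[ \sigma \gg s \gg \tp = \sigma \gg \Sigma_A B = \Sigma_{\sigma\gg A}(\tilde\sigma \gg B) \]
by stability of $\Sigma$.

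The computation runs as follows:
\begin{align*}
  \fst(\sigma \gg s)
  &= \fst(\sigma \gg \pair(\fst\, s, \snd\, s)) \\
  &= \fst(\pair(\sigma \gg \fst\, s, \sigma \gg \snd\, s)) \\
  &= \sigma \gg \fst\, s .
\end{align*}
The first step uses the $\eta$-rule, the second stability of $\pair$, and the third the first $\beta$-rule. The same chain with $\snd$ and the second $\beta$-rule gives the other equation.

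The only real work is checking that the middle expression is well formed. We must verify that the pair $\sigma \gg \fst\, s$, $\sigma \gg \snd\, s$ satisfies the hypotheses of clause (3) of the definition relative to $\sigma \gg A$ and $\tilde\sigma \gg B$, since this is implicit in clause (4).

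The first hypothesis is immediate: $\sigma \gg \fst\, s \gg \tp = \sigma \gg A$. For the second we need
\[ \sigma \gg (\id_\Gamma . \fst\, s) \gg B = (\id_\Delta . (\sigma \gg \fst\, s)) \gg \tilde\sigma \gg B. \]
This reduces to the identity
\[ \sigma \gg (\id_\Gamma . a) = (\id_\Delta . (\sigma \gg a)) \gg \tilde\sigma \]
of maps $\Delta \to \Gamma . A$. We prove it by the uniqueness part of the pullback defining $\Gamma . A$, comparing the two components.

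\begin{itemize}
  \item After composing with $d_A$, the left side gives $\sigma$. The right side gives $\id_\Delta \gg d_{\sigma\gg A} \gg \sigma = \sigma$.
  \item After composing with $\var_A$, the left side gives $\sigma \gg a$. The right side gives $\var_{\sigma \gg A}$ precomposed with $\id_\Delta . (\sigma\gg a)$, which is $\sigma \gg a$.
\end{itemize}

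This bookkeeping identity is the main obstacle, albeit a mild one. Everything else is a direct rewrite along the lines of the proof for $\unlam$.
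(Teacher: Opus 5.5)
Your proof is correct and matches the paper's: rewrite $s$ by the $\eta$-rule, push $\sigma$ inside $\pair$ using its stability, then apply the $\beta$-rules, for both $\fst$ and $\snd$. Your extra well-typedness check for $\pair(\sigma \gg \fst\, s, \sigma \gg \snd\, s)$, via the identity $\sigma \gg (\id_\Gamma . a) = (\id_\Delta . (\sigma \gg a)) \gg \tilde\sigma$, is valid; the paper leaves it implicit.
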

\begin{proof}
\begin{align*}
  & \fst (\sigma \gg s) \\
  = \, & \fst (\sigma \gg \pair (\fst \; s, \snd \;s)) \\
  = \, & \fst (\pair (\sigma \gg \fst \; s, {\sigma} \gg \snd \; s)) \\
  = \, & \sigma \gg \fst \; s \\
  \\
  & \snd (\sigma \gg s) \\
  = \, & \snd (\sigma \gg \pair (\fst \; s, \snd \;s)) \\
  = \, & \snd (\pair (\sigma \gg \fst \; s, {\sigma} \gg \snd \; s)) \\
  = \, & {\sigma} \gg \snd \; s
\end{align*}
\end{proof}

\begin{lemma}\label{lem:substituted-pair}
  Suppose $\tp$ has elementary $\Sigma$-types,
  and fix a map $\sigma : \Delta \to \Gamma$.
  There is a bijection between pairs of commutative squares
  \begin{equation}\label{eq:substituted-pair-lhs}
  \begin{tikzcd}
    \Delta & \Tm && \Delta & \Tm \\
    \Gamma & \Ty && {\Gamma . A} & \Ty
    \arrow["a", from=1-1, to=1-2]
    \arrow["\sigma"', from=1-1, to=2-1]
    \arrow["\tp", from=1-2, to=2-2]
    \arrow["b", from=1-4, to=1-5]
    \arrow["{\sigma . a}"', from=1-4, to=2-4]
    \arrow["\tp", from=1-5, to=2-5]
    \arrow["A"', from=2-1, to=2-2]
    \arrow["B"', from=2-4, to=2-5]
  \end{tikzcd}
  \end{equation}
  and squares
  \begin{equation}
  \begin{tikzcd}\label{eq:substituted-pair-rhs}
      \Delta && \Tm \\
      \Gamma && \Ty
      \arrow["p", from=1-1, to=1-3]
      \arrow["\sigma"', from=1-1, to=2-1]
      \arrow["\tp", from=1-3, to=2-3]
      \arrow["{\Sigma_A B}"', from=2-1, to=2-3]
  \end{tikzcd}
  \end{equation}
  given by applying $(a, b) \mapsto \pair(a,b)$ forwards,
  and $(\fst \, p, \snd \, p) \mapsfrom p$ backwards.
\end{lemma}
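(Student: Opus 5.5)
The plan mirrors the proof of \Cref{lem:substituted-lam}. The idea is to reduce everything to the case $\sigma = \id$ over the context $\Delta$, using substitution stability of $\Sigma_A B$, and then appeal to the $\beta$- and $\eta$-rules. The key observation is that a pair $(a,b)$ as in \eqref{eq:substituted-pair-lhs} is exactly the data required by clause (3) of the definition of elementary $\Sigma$-types. That data lives over $\Delta$, for the type $\sigma \gg A : \Delta \to \Ty$ and the dependent type $\tilde\sigma \gg B : \Delta . (\sigma \gg A) \to \Ty$.

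First I check that the pair data matches. The condition $a \gg \tp = \sigma \gg A$ says $a$ is a term of type $\sigma \gg A$. For $b$, I need the identity
\[ \id_\Delta . a \gg \tilde\sigma = \sigma . a \]
as maps $\Delta \to \Gamma . A$. I prove it by the universal property of the pullback $\Gamma . A$. Both sides composed with $d_A$ give $\sigma$, since $\id_\Delta . a \gg d_{\sigma\gg A} \gg \sigma = \sigma$. Both sides composed with $\var_A$ give $a$, since $\tilde\sigma \gg \var_A = \var_{\sigma\gg A}$ and $\id_\Delta . a \gg \var_{\sigma \gg A} = a$. It follows that
\[ b \gg \tp = \sigma . a \gg B = \id_\Delta . a \gg (\tilde\sigma \gg B), \]
so $\pair(a,b)$ is defined with respect to $\sigma\gg A$ and $\tilde\sigma \gg B$. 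Its type is
\[ \pair(a,b) \gg \tp = \Sigma_{\sigma\gg A}(\tilde\sigma \gg B) = \sigma \gg \Sigma_A B, \]
which is the square \eqref{eq:substituted-pair-rhs}.

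Conversely, given $p$ with $p \gg \tp = \sigma \gg \Sigma_A B$, stability gives $p \gg \tp = \Sigma_{\sigma\gg A}(\tilde\sigma\gg B)$. Clause (5) then yields $\fst\,p$ with $\fst\,p \gg \tp = \sigma \gg A$. It also yields $\snd\,p$ with $\snd\,p \gg \tp = \id_\Delta . \fst\,p \gg \tilde\sigma \gg B$, and by the identity above (with $a = \fst\,p$) this equals $\sigma . \fst\,p \gg B$. So the backward map lands in pairs of squares of the form \eqref{eq:substituted-pair-lhs}. Finally, the $\beta$-rules give $\fst(\pair(a,b)) = a$ and $\snd(\pair(a,b)) = b$, and the $\eta$-rule gives $\pair(\fst\,p,\snd\,p) = p$, so the two maps are mutually inverse.

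The main obstacle is the pullback identity $\id_\Delta . a \gg \tilde\sigma = \sigma . a$. Every step depends on it, and it is the only place where the universal property of the chosen pullback is actually used rather than the axioms alone. One subtlety to note: the operations $\pair$, $\fst$ and $\snd$ here are taken with respect to $\sigma\gg A$ and $\tilde\sigma\gg B$ over $\Delta$. The type equality from stability is needed only to make the hypotheses match, and no transport of the operations themselves is required.
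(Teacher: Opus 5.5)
Your proposal is correct and follows the same route as the paper. The paper also rewrites $\sigma \gg \Sigma_A B = \Sigma_{\sigma \gg A}(\tilde\sigma \gg B)$, applies $\pair$, $\fst$ and $\snd$ over $\Delta$, and relies on the $\beta$- and $\eta$-rules. You additionally check explicitly, via the universal property of $\Gamma . A$, the identity $(\id_\Delta . a) \gg \tilde\sigma = \sigma . a$, which the paper uses without comment when typing $\snd(p)$ and tacitly in the forward direction.
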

\begin{proof}
  Suppose \Cref{eq:substituted-pair-lhs}.
  Then $\pair (a,b)$ is a term of the
  substituted $\Sigma$-type
  $\sigma \gg \Sigma_A B = \Sigma_{\sigma \gg A} (\tilde{\sigma} \gg B)$
  \[
    \begin{tikzcd}
      \Delta && \Tm \\
      \Gamma && \Ty
      \arrow["{{\pair(a,b)}}", from=1-1, to=1-3]
      \arrow["\sigma"', from=1-1, to=2-1]
      \arrow["\tp", from=1-3, to=2-3]
      \arrow["{\Sigma_A B}"', from=2-1, to=2-3]
    \end{tikzcd}
  \]
  Conversely, given \Cref{eq:substituted-pair-rhs},
  $p$ is a term of the
  substituted $\Sigma$-type
  $\sigma \gg \Sigma_A B = \Sigma_{\sigma \gg A} (\tilde{\sigma} \gg B)$,
  then the first component is typed correctly $\fst(p) \gg \tp = \sigma \gg A$ and $\snd(p)$
  can be typed as follows:
  \[
    \snd(p) \gg \tp = (\id_\Delta . \fst(p)) \gg \tilde \sigma \gg B = (\sigma . \fst(p)) \gg B
  \]
\end{proof}

\begin{definition}[$\Id$-types, \done{https://github.com/sinhp/HoTTLean/blob/TYPES2026/HoTTLean/Model/Unstructured/UnstructuredUniverse.lean\#L439}]
  An \emph{elementary $\Id$-type structure}
  on a universe $\tp : \Tm \to \Ty$
  consists of the following
  \begin{enumerate}
  \item 
    For any context $\Gamma$, type $A : \Gamma \to \Ty$,
    and terms $a_0, a_1 : \Gamma \to \Tm$ such that
    $a_i \gg \tp = A$,
    there is a type $\Id_A (a_0,a_1) : \Gamma \to \Ty$.
  \item
    The construction $\Id_A (a_0,a_1)$ is stable under substitution.
  \item
    For any context $\Gamma$, type $A : \Gamma \to \Ty$,
    and term $a : \Gamma \to \Tm$ such that
    $a \gg \tp = A$,
    there is a term $\refl \, a : \Gamma \to \Tm$
    satisfying
    $\refl \, a \gg \tp = \Id_A (a,a)$.
  \item The construction $\refl$ is stable under substitution.
  \item
    Under the assumptions of (3)
    one can construct the context $\Gamma . (x : A) . \Id_A(a,x)$
    by taking context extension on $\Gamma . A$ for the type
    $\Id_A(a,x)$ given by (1),
    and the substitution
    \[\rho_a := \id_\Gamma . a . \refl \, a : \Gamma \to \Gamma . (x : A) . \Id_A(a,x)\]
    For any \emph{motive} $C : \Gamma .(x : A) . \Id_A(a,x) \to \Ty$
    -- that means $C$ is parametrized by a family of paths with a fixed starting point $a$ --
    and map $c_\refl : \Gamma \to \Tm$ satisfying
    \[ c_\refl \gg \tp = \rho_a \gg C\]
    there is a term
    $j (C, c_\refl) : \Gamma . (x : A) . \Id_A(a,x) \to \Tm$
    satisfying $j \gg \tp = C$ and
    $\rho_a \gg j (C, c_\refl) = c_\refl$
    \[\begin{tikzcd}
	   \Gamma & \Tm \\
	   {\Gamma . (x : A) . \Id_A(a,x)} & \Ty
	   \arrow["{c_\refl}", from=1-1, to=1-2]
	   \arrow["{\rho_a}"', from=1-1, to=2-1]
	   \arrow["\tp", from=1-2, to=2-2]
	   \arrow["{j(C,c_\refl)}"{description}, dashed, from=2-1, to=1-2]
	   \arrow["C"', from=2-1, to=2-2]
    \end{tikzcd}\]
  \item The construction $j$ is stable under substitution.
  \end{enumerate}
\end{definition}

We will see eventually
(by combining 
\Cref{lem:coherent-weak-pullback},
\Cref{prop:elementary-id-to-algebraic-id}
and \Cref{prop:algebraic-id-to-elementary-id})
that the last condition (6) can be obtained by an adjustment of $j$.
If $j$ is not stable under substitution,
we can always replace it with another $j'$ that is.

\begin{remark}
In the previous exposition of type formers,
we have exclusively worked with a single universe $\tp$,
for the sake of simplicity.
However,
we can generalize the definitions for $\Pi$-types and
$\Sigma$-types
to hold with $A$ in a universe $\tp_0$,
$B$ in a second universe $\tp_1$,
and $\Pi_A B$ in a third universe $\tp_2$.
For example, this more general formulation can
be used to describe Lean's ``universe polymorphic'' $\Pi$-types.
One can find the more general formulation in our formalization.

In the presence of multiple universes,
one typically requires that identity types are
\emph{large eliminating},
meaning that the motive for identity elimination $C$
need not live in the same universe as the type $A$ on which
identities are taken.
In some systems one might even require that $\Id_A (a_0,a_1)$ lives
in a different universe.
For example, in Lean's type theory $\Id_A (a_0,a_1)$
lives in a universe of propositions \textsf{Prop}.
\end{remark}

As we will see in \Cref{sec:StrSem},
working with a single universe can lead to \textsf{Type : Type} paradoxes,
which we resolve by considering universe lifts.
Typically, one might consider an infinite sequence
of universes lifts such that each type
-- including universes -- are terms of some large enough universe.
The following definition presents semantics for
non-cumulative universe lifts
``\`a la Coquand'' \cite{coquand2013, gratzer2020, favonia2023}.

\begin{definition}[\done{https://github.com/sinhp/HoTTLean/blob/TYPES2026/HoTTLean/Model/Unstructured/UHom.lean\#L23},
\done{https://github.com/sinhp/HoTTLean/blob/TYPES2026/HoTTLean/Model/Unstructured/UHom.lean\#L77}]
  A \emph{morphism of universes} $l : \tp_0 \to \tp_1$ 
  consists of a pair of maps $l_\Ty : \Ty_0 \to \Ty_1$
  and $l_\Tm : \Tm_0 \to \Tm_1$ such that 
  the following square is a pullback
  \[\begin{tikzcd}
	{\Tm_0} & {\Tm_1} \\
	{\Ty_0} & {\Ty_1}
	\arrow["{{{l_\Tm}}}", from=1-1, to=1-2]
	\arrow["{{{\tp_0}}}"', from=1-1, to=2-1]
	\arrow["\lrcorner"{anchor=center, pos=0.125}, draw=none, from=1-1, to=2-2]
	\arrow["{\tp_1}", from=1-2, to=2-2]
	\arrow["{{{l_\Ty}}}"', from=2-1, to=2-2]
\end{tikzcd}\]
  Suppose $\CC$ has a terminal object $1$.
  A \emph{universe lift} consists of
  a morphism of universes $l : \tp_0 \to \tp_1$
  and a pair of maps $U_0 : 1 \to \Ty_1$
  and $\mathsf{asTm} : \Ty_0 \to \Tm_1$
  such that the following square is a pullback
\[\begin{tikzcd}
	{\Ty_0} & {\Tm_1} \\
	1 & {\Ty_1}
	\arrow["{{\mathsf{asTm}}}", from=1-1, to=1-2]
	\arrow[from=1-1, to=2-1]
	\arrow["\lrcorner"{anchor=center, pos=0.125}, draw=none, from=1-1, to=2-2]
	\arrow["{{{\tp_1}}}", from=1-2, to=2-2]
	\arrow["{{{U_0}}}"', from=2-1, to=2-2]
\end{tikzcd}\]
\end{definition}
The maps $l_\Ty$ and $l_\Tm$ allow types and terms
from the smaller universe
to be lifted to the larger one.
The first pullback square ensures that universe lifts commute 
up to isomorphism with context extension.
\[ \Gamma . A \iso \Gamma . (A \gg l_\Ty) \]
The second pullback square provides a type in the larger universe
that represents the smaller universe,
and is equivalent to providing an isomorphism
\[ 1.U_0 \iso \Ty_0 \]
One could also place additional requirements on
morphisms of universes,
such as requiring that $l_\Ty$ is monomorphic,
or that the above isomorphisms are strict equalities
$\Gamma . A = \Gamma . (A \gg l_\Ty)$ and $1.U_0 = \Ty_0$.

\section{Polynomial functors in \foreignlanguage{greek}{π}-clans}\label{sec:Poly}

If $\CC$ is LCC,
any map $f:E\to B$ in $\CC$
can be considered a \emph{polynomial signature}.
Each such polynomial signature induces a \emph{polynomial functor},
denoted as $P_f:\CC\to \CC$, as the composition
\[
\begin{tikzcd}
    \CC \ar[r,"E^*"]& \CC/E\ar[r,"f_*"] & \CC/B\ar[r,"B_!"] & \CC
\end{tikzcd}
\]

In this section, we show that $\pi$-clans form
a good setting for developing a general theory of polynomial functors.
In this setting, we also define a polynomial functor as a three-fold
composition,
but restricted to suitable subcategories.

\begin{remark}\label{rmk:univariate}
  To simplify the exposition,
  we are only presenting polynomial functors
  ``in a single variable''.
  In the Lean formalization,
  we first build polynomial functors ``in many variables''
  and then specialize to the single variable case.
  One can find a presentation of polynomial functors
  ``in many variables'' in \cite{gambino2013}.
\end{remark}

\begin{definition}[\done{https://github.com/sinhp/HoTTLean/blob/TYPES2026/HoTTLean/ForMathlib/CategoryTheory/Adjunction/PartialAdjoint.lean\#L16}]
  Fix a functor $L : \CC \to \DD$.
  A partial right adjoint of $L$,
  denoted $L \dashv_\partial \partial R$,
  consists of two full subcategories
  $\partial\CC \hookrightarrow \CC$
  and $\partial\DD \hookrightarrow \DD$,
  a functor $\partial R : \partial \DD \to \partial \CC$, as depicted below,
  as well as a hom-set bijection for all
  $X \in \CC^\op$ and $Y \in \partial\DD$
      \[ \DD(L (X), Y) \iso \CC(X,\partial R (Y))\]
  natural in both $X$ and $Y$.
  \[\begin{tikzcd}
	\CC & {\partial\CC} \\
	\DD & {\partial \DD}
	\arrow[""{name=0, anchor=center, inner sep=0}, "L"', from=1-1, to=2-1]
	\arrow[hook', from=1-2, to=1-1]
	\arrow[""{name=1, anchor=center, inner sep=0}, "{\partial R}"', from=2-2, to=1-2]
	\arrow[hook', from=2-2, to=2-1]
	\arrow["{\dashv_\partial}"{description}, draw=none, from=0, to=1]
\end{tikzcd}\]
\end{definition}

The condition of being a partial right adjoint
is stronger than being a right adjoint of a restriction of $L$,
\[\partial L : \partial \CC \to \partial \DD\]
when such a restriction exists.
Indeed, consider the right-hand side of the isomorphism, whereas the domain of the map in $\CC$ is only allowed to be in $\partial C$ for the restricted adjunction, we allow it to be any object in $C$.

The following definition is adapted from Joyal's notes \cite{joyal2017},
and can be traced further back to \cite{taylor1987}.
The combination of conditions does not appear as a 
single definition in Lean,
to avoid creating redundant hypotheses.
Links to the formalization are therefore provided for each individual condition.
\begin{definition} \label{def:clan}
  A \emph{preclan}\footnote{
    We have introduced the terminology ``preclan'' for convenience,
    and it does not appear in the literature.}
  $(\CC,\RR)$ consists of a category $\CC$ with a class of maps $\RR$ in $\CC$
  whose members will be called \emph{$\RR$-maps},
  satisfying the following
  \begin{enumerate}
    \item Pullbacks of $\RR$-maps along all morphisms exist and are $\RR$-maps. (\mathlib{https://leanprover-community.github.io/mathlib4_docs/Mathlib/CategoryTheory/MorphismProperty/Limits.html\#CategoryTheory.MorphismProperty.IsStableUnderBaseChangeAlong},
    \mathlib{https://leanprover-community.github.io/mathlib4_docs/Mathlib/CategoryTheory/MorphismProperty/Limits.html\#CategoryTheory.MorphismProperty.HasPullbacksAlong})
    \item All isomorphisms are in $\RR$. (\mathlib{https://leanprover-community.github.io/mathlib4_docs/Mathlib/CategoryTheory/MorphismProperty/Composition.html\#CategoryTheory.MorphismProperty.isomorphisms_le_of_containsIdentities})
    \item $\RR$ is closed under composition. (\mathlib{https://leanprover-community.github.io/mathlib4_docs/Mathlib/CategoryTheory/MorphismProperty/Composition.html\#CategoryTheory.MorphismProperty.IsStableUnderComposition})
  \end{enumerate}
  
  Following Joyal,
  we use $\RR(X)$ to denote the full subcategory of the slice
  $\CC / X$
  consisting of objects with an underlying map in $\RR$.
  For any $f : Y \to X$ we have a functor $f_! : \CC / Y \to \CC / X$ between the slice categories,
  and a partial right adjoint
  \[\begin{tikzcd}
	{\CC / Y} & {\RR(Y)} \\
	{\CC / X} & {\RR(X)}
	\arrow[""{name=0, anchor=center, inner sep=0}, "{f_!}"', from=1-1, to=2-1]
	\arrow[hook', from=1-2, to=1-1]
	\arrow[""{name=1, anchor=center, inner sep=0}, "{\partial f^*}"', from=2-2, to=1-2]
	\arrow[hook', from=2-2, to=2-1]
	\arrow["{\dashv_\partial}"{description}, draw=none, from=0, to=1]
\end{tikzcd}\]
  where $\partial f^*$ takes an $\RR$-map over $X$
  to its pullback along $f$.
  If $f$ is an $\RR$-map then $\partial f^*$
  extends to a pullback functor $f^* : \CC / X \to \CC / Y$
  between the slice categories,
  and $f_!$ restricts to a map
  $\partial f_! : \RR(Y) \to \RR(X)$.
  Furthermore,
  \[f_! : \CC / Y \dashv \CC / X : f^* \quad \text{and} \quad
  \partial f_! : \RR(Y) \dashv \RR(X) : \partial f^*\]

  A preclan $(\CC,\RR)$ will be called a
  \emph{$\pi$-preclan} when additionally
  \begin{enumerate}[resume]
    \item $\RR$ is closed under pushforward. (\done{https://github.com/sinhp/HoTTLean/blob/TYPES2026/HoTTLean/ForMathlib/CategoryTheory/MorphismProperty/OverAdjunction.lean\#L222},
      \done{https://github.com/sinhp/HoTTLean/blob/TYPES2026/HoTTLean/ForMathlib/CategoryTheory/MorphismProperty/OverAdjunction.lean\#L229}).
      This means that for any $\RR$-map $f : Y \to X$
      there is a partial right adjoint to pullback
      \[
    \begin{tikzcd}
	{\CC / X} & {\RR(X)} \\
	{\CC / Y} & {\RR(Y)}
	\arrow[""{name=0, anchor=center, inner sep=0}, "{f^*}"', from=1-1, to=2-1]
	\arrow[hook', from=1-2, to=1-1]
	\arrow[""{name=1, anchor=center, inner sep=0}, "{\partial f_*}"', from=2-2, to=1-2]
	\arrow[hook', from=2-2, to=2-1]
	\arrow["{\dashv_\partial}"{description}, draw=none, from=0, to=1]
    \end{tikzcd}
      \]
    
  \end{enumerate}
  It follows that for $f : Y \to X$ in a $\pi$-preclan $\RR$,
  we have an adjunction
  \[\partial f^* : \RR(X) \dashv \RR(Y) : \partial f_*\]
  
  Suppose $\CC$ has a terminal object $1$.
  We will refer to objects $X$ such that the unique map $X \to 1$
  is in $\RR$ as \emph{$\RR$-objects}.
  A ($\pi$-)preclan is called a ($\pi$-)clan when additionally
  \begin{enumerate}[resume]
    \item All objects are $\RR$-objects. (\done{https://github.com/sinhp/HoTTLean/blob/TYPES2026/HoTTLean/ForMathlib/CategoryTheory/MorphismProperty/Limits.lean\#L75})
  \end{enumerate}

  Following Joyal \cite{joyal2017},
  for a preclan $(\CC,\RR)$
  we will call $(\RR(X),\RR |_{\RR(X)})$
  the \emph{local clan} at $X$.
\end{definition}

\begin{example}
We provide some examples of the foregoing definitions.
\begin{itemize}
  \item If $\CC$ is an LCC category, then (4) can be phrased as
    $\RR$ being closed under the ambient pushforward operation.
    In particular, if $\RR$ consists of all the maps in $\CC$,
    then $(\CC,\RR)$ forms a $\pi$-clan.
  \item If $\CC$ is a Quillen model category and $\RR$ is
    the class of fibrations,
    then $(\CC,\RR)$ forms a preclan.
    If additionally $\CC$ is LCC and the model structure satisfies
    the Frobenius property,
    then $(\CC,\RR)$ is a $\pi$-preclan \cite{gambino2017}.
    Of course, in either case
    restricting to the full subcategory of fibrant objects
    results in a ($\pi$-)clan.
  \item It follows from the previous example that the
    classical model structure on the category of small categories
    forms a clan $(\Cat,\II)$,
    where $\II$ consists of (split)
    isofibrations \cite{joyal2017}.
    This restricts to a $\pi$-clan on the category
    of small groupoids $(\Grpd,\II |_\Grpd)$ \cite{joyal2017}. 
    That $(\Grpd,\II |_\Grpd)$ is a $\pi$-clan
    has been formalized in Lean
    (\done{https://github.com/sinhp/HoTTLean/blob/TYPES2026/HoTTLean/Groupoids/SplitIsofibration.lean\#L42},
    \done{https://github.com/sinhp/HoTTLean/blob/TYPES2026/HoTTLean/Groupoids/SplitIsofibration.lean\#L47},
    \done{https://github.com/sinhp/HoTTLean/blob/TYPES2026/HoTTLean/Groupoids/SplitIsofibration.lean\#L69},
    \done{https://github.com/sinhp/HoTTLean/blob/TYPES2026/HoTTLean/Groupoids/SplitIsofibration.lean\#L293}).
  \item The Kan-Quillen model structure on $\sSet$
    produces an $\pi$-preclan \cite{joyal2017}.
  \item A model of MLTT with sufficient structure
    (enough universes, $\Unit$-, $\Sigma$-, and $\Pi$-types)
    forms a $\pi$-clan.
    A precise statement of this fact is given in
    \Cref{cor:elementary-universe-hierarchy-to-algebraic}.
\end{itemize}
\end{example}

Henceforth we fix a $\pi$-clan $(\CC,\RR)$ and an $\RR$-map $f : E \to B$.

\begin{definition}[\done{https://github.com/sinhp/HoTTLean/blob/TYPES2026/HoTTLean/ForMathlib/CategoryTheory/Polynomial.lean\#L649},
\done{https://github.com/sinhp/HoTTLean/blob/TYPES2026/HoTTLean/ForMathlib/CategoryTheory/Polynomial.lean\#L701}]
  \label{def:poly}
  The \emph{polynomial functor} $P_f : \CC \to \CC$
  associated with the
  \emph{polynomial signature} $f \in \RR$ is defined by the following composition
\[\begin{tikzcd}
	\CC & {\RR(1)} & {\RR(E)} & {\RR(B)} & {\RR(1)} & \CC
	\arrow["\simeq"{description}, draw=none, from=1-1, to=1-2]
	\arrow["{Y^*}", from=1-2, to=1-3]
	\arrow["{f_*}", from=1-3, to=1-4]
	\arrow["{X_!}", from=1-4, to=1-5]
	\arrow["\simeq"{description}, draw=none, from=1-5, to=1-6]
\end{tikzcd}\]
\end{definition}

\begin{proposition}[\done{https://github.com/sinhp/HoTTLean/blob/TYPES2026/HoTTLean/ForMathlib/CategoryTheory/Polynomial.lean\#L915},
\done{https://github.com/sinhp/HoTTLean/blob/TYPES2026/HoTTLean/ForMathlib/CategoryTheory/Polynomial.lean\#L929},
\done{https://github.com/sinhp/HoTTLean/blob/TYPES2026/HoTTLean/ForMathlib/CategoryTheory/Polynomial.lean\#L946}] \label{prop:poly-up}
  A polynomial functor is characterized by its universal property:
  there is a bijection
  \[ \CC(\Gamma, P_f X) \iso
    \sum_{\fst \in \CC(\Gamma,B)}\CC(\fst \times_B f, X)\]
  that is natural in both $\Gamma \in \CC^\op$ and $X \in \CC$.
  For a given $t : \Gamma \to P_f X$,
  an element of the dependent sum can be visualized as
\[\begin{tikzcd}
	X & {\fst (t) \times_B f} & E \\
	& \Gamma & B
	\arrow["\snd (t)"', from=1-2, to=1-1]
	\arrow[from=1-2, to=1-3]
	\arrow[from=1-2, to=2-2]
	\arrow["\lrcorner"{anchor=center, pos=0.125}, draw=none, from=1-2, to=2-3]
	\arrow["f", from=1-3, to=2-3]
	\arrow["\fst (t)"', from=2-2, to=2-3]
\end{tikzcd}\]
  Of particular interest is the natural bijection applied to
  the identity $\id \in \CC(P_f X, P_f X)$,
  which results in a {universal} pair
  $\fstProj = \fst (\id) : P_f X \to B$
  and $\sndProj = \snd (\id) : \fstProj \times_B f \to X$,
  through which all others factor
\[\begin{tikzcd}[column sep = large]
	& X \\
	{\fst(t) \times_B f} & {\fstProj \times_B f} & E \\
	\Gamma & {P_f X} & B
	\arrow["{\snd(t)}", from=2-1, to=1-2]
  \arrow["{t \times_B f}"', from=2-1, to=2-2]
	\arrow[from=2-1, to=3-1]
	\arrow["\lrcorner"{anchor=center, pos=0.125}, draw=none, from=2-1, to=3-2]
	\arrow["{\sndProj}"', from=2-2, to=1-2]
	\arrow[from=2-2, to=2-3]
	\arrow[from=2-2, to=3-2]
	\arrow["\lrcorner"{anchor=center, pos=0.125}, draw=none, from=2-2, to=3-3]
	\arrow["f", from=2-3, to=3-3]
	\arrow["t"', from=3-1, to=3-2]
	\arrow["{\fst(t)}"', bend right, from=3-1, to=3-3]
	\arrow["{\fstProj}"', from=3-2, to=3-3]
\end{tikzcd}\]
\end{proposition}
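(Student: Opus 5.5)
We prove the bijection by chasing $\CC(\Gamma, P_f X)$ through the three functors of \Cref{def:poly}, one adjunction at a time. Since $(\CC,\RR)$ is a $\pi$-clan, every object is an $\RR$-object, so $\CC \simeq \RR(1)$. Under this equivalence $\Gamma$ and $X$ become the $\RR$-maps $\Gamma \to 1$ and $X \to 1$. The $Y^*$ in the definition is pullback along $E \to 1$, which exists and stays in $\RR$ by condition (1). The $X_!$ is postcomposition with $B \to 1$, which stays in $\RR$ by condition (3).

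First, a map $t : \Gamma \to P_f X$ in $\CC$ is the same as a map $\Gamma \to B_!\, f_* (E^* X)$ in $\CC/1$. By the adjunction $B_! \dashv B^*$ on full slices (valid since $B \to 1$ is an $\RR$-map), this is the same as a pair consisting of
\begin{itemize}
  \item a map $\fst(t) : \Gamma \to B$, and
  \item a map $\Gamma \to \partial f_* (E^* X)$ over $B$, where $\Gamma$ is viewed as an object of $\CC/B$ via $\fst(t)$.
\end{itemize}
This decomposition into a dependent sum is just the standard description of maps out of $\Gamma$ into an object of the form $B_!(-)$: such a map is a choice of base point in $B$ together with a map over $B$.

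Second, we use the partial right adjunction $f^* \dashv_\partial \partial f_*$ from condition (4). Its hom-set bijection holds for \emph{arbitrary} $\Gamma \in \CC/B$, not only $\RR$-maps. This is exactly why the partial adjunction is needed: $\fst(t)$ need not be in $\RR$. It turns maps $\Gamma \to \partial f_*(E^*X)$ over $B$ into maps $\fst(t)\times_B f \to E^* X$ over $E$. The pullback $\fst(t) \times_B f$ exists because $f \in \RR$. Third, since $E^* X = E \times X$ is a product, maps into it over $E$ correspond to plain maps $\fst(t)\times_B f \to X$. Composing the three bijections gives the claimed one.

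The main obstacle is naturality, especially in $\Gamma$. A map $\gamma : \Gamma' \to \Gamma$ acts on the right-hand side by sending $\fst$ to $\gamma \gg \fst$ and precomposing $\snd$ with the induced map $(\gamma\gg\fst)\times_B f \to \fst\times_B f$. We must check that each step is natural for this action. For the $B_!$ step this is clear. For the partial adjunction step, naturality in $X \in \CC^\op$ is part of the definition of a partial right adjoint, applied to $\gamma$ regarded as a morphism in $\CC/B$. Naturality in $X$ follows from naturality in $Y$ of each adjunction together with the functoriality of $E^*$.

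Once naturality is established, the universal pair follows formally by Yoneda. Set $\fstProj = \fst(\id)$ and $\sndProj = \snd(\id)$. Naturality in $\Gamma$ applied to $t = \id \circ t$ gives $\fst(t) = t \gg \fstProj$ and $\snd(t) = (t\times_B f)\gg \sndProj$. This is exactly the displayed factorization, where $t \times_B f$ is the map induced by pasting the two pullback squares.
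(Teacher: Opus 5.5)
Your proposal is correct and follows essentially the paper's proof. It uses the same chain of bijections: through $B_!$, then the partial adjunction $f^* \dashv_\partial \partial f_*$ (applied to the possibly non-$\RR$ object $\fst(t)$, exactly the point the paper stresses), then $E_! \dashv E^*$. It also obtains $\fstProj$, $\sndProj$ and the factorization $\snd(t) = (t \times_B f) \gg \sndProj$ from naturality, as the paper does.

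One small correction: your first step is not an instance of the adjunction $B_! \dashv B^*$, which describes maps \emph{out of} $B_!(-)$. It is the trivial fact that a map into the total object of something over $B$ is a map to $B$ together with a map over $B$, which your next sentence states correctly.
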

\begin{proof}
  We first define $\fstProj : P_f X \to B$
  as the underlying morphism defining the object $f_* E^* X$ in the local clan $\RR(B)$.
  Then for $t : \Gamma \to P_f X$ we can define $\fst(t) := t \gg \fstProj : \Gamma \to B$.
  Note that indeed $\fstProj = \fst (\id)$.

  By combining the (partial) adjunctions $f^* \dashv f_*$ and $E_! \dashv E^*$ we have
  \begin{align*}
    & \CC / B (\fst(t), \fstProj) \\
    = &\, \CC / B (\fst(t), f_* E^* X) \\
    \iso &\, \CC / E (f^* \fst(t), E^* X) \\
    \iso &\, \CC (E_! f^* \fst(t), X) \\
    = &\, \CC (\fst(t) \times_B f, X)
  \end{align*}
  Note that the full strength of partial adjunction, rather than merely adjunction restricted to subcategories, is used. Indeed, as $\fst(t)$ (and consequently, $f^*\fst(t)$ and $E_!f^*\fst(t)$) does not need to be in $\RR$, all three equations in the middle of the derivation require the isomorphism from partial adjunctions to be derived.
  We use this to define $\snd(t) : \fst(t) \times_B f \to X$
  as the transpose of $t : \fst(t) \to \fstProj$ in the slice $\CC / B$ along the sequence of isomorphisms above.
  Then we can define $\sndProj := \snd(\id)$,
  and verify using the naturality of the (partial) adjunction that
  \[ \snd(t) = (t \times_B f) \gg \sndProj\]
  We leave it to the reader to reconstruct $t$ from $\fst(t)$ and $\snd(t)$.
  
  To compute $\sndProj$ (and therefore $\snd$) explicitly ,
  we provide the following equation:
  \[
    \begin{tikzcd}[column sep = large]
    {E_! f^* f_* E^* X} & {E_! E^* X} \\
    {\fstProj \times_B f} & {E \times X} \\
    & X
    \arrow["{E_! \counit_{E^*X}}", from=1-1, to=1-2]
    \arrow[equals, from=1-1, to=2-1]
    \arrow[equals, from=1-2, to=2-2]
    \arrow["{\counit_X}", bend left = 60, from=1-2, to=3-2]
    \arrow["{\epsilon_X}", from=2-1, to=2-2]
    \arrow["\sndProj"', from=2-1, to=3-2]
    \arrow["{E^* X}"', from=2-2, to=3-2]
  \end{tikzcd}
  \]
  where the
  $E^* X$ component of the counit of the adjunction $f^* \dashv f_*$ is denoted by $\epsilon_X$.  
\end{proof}

The structure of a $\pi$-clan is enough to make the composition of polynomial functors work correctly. The following definition is in a precise analogy with its counterpart in the LCC setting. 

\begin{definition}[\done{https://github.com/sinhp/HoTTLean/blob/TYPES2026/HoTTLean/ForMathlib/CategoryTheory/Polynomial.lean\#L882}]\label{def:pcomp}
  If $f : E \to B$ and $f' : E' \to B'$ are both in $\RR$
  then we can form the \emph{polynomial composition}
  $f' \pcomp f : \compDom \to P_f B'$ as follows
  \[
  \begin{tikzcd}
    {E'} & \compDom \\
    {B'} & {\fstProj \times_B f} & E \\
    & {P_fB'} & B
    \arrow["{f'}"', from=1-1, to=2-1]
    \arrow[from=1-2, to=1-1]
    \arrow["\lrcorner"{anchor=center, pos=0.125, rotate=-90}, draw=none, from=1-2, to=2-1]
    \arrow[from=1-2, to=2-2]
    \arrow["{f' \pcomp f}"{description, pos=0.2}, shift left=5, bend left = 50, from=1-2, to=3-2]
    \arrow["{\sndProj}", from=2-2, to=2-1]
    \arrow[from=2-2, to=2-3]
    \arrow[from=2-2, to=3-2]
    \arrow["\lrcorner"{anchor=center, pos=0.125}, draw=none, from=2-2, to=3-3]
    \arrow["f", from=2-3, to=3-3]
    \arrow["{\fstProj}"', from=3-2, to=3-3]
  \end{tikzcd}\]
  This is the polynomial signature for a polynomial functor
  since $\RR$ is stable under pullback (axiom (1)) and closed under composition
  (axiom (3)).
  The associated polynomial functor $P_{f' \pcomp f} : \CC \to \CC$
  is characterized by the natural isomorphism\footnote{
    This isomorphism is not needed in the remaining development
    but is mentioned for completeness. Its construction is, therefore, omitted.}
  \[ P_{f' \pcomp f} \iso P_{f'} \gg P_f\]
\end{definition}

\begin{lemma}[\done{https://github.com/sinhp/HoTTLean/blob/TYPES2026/HoTTLean/ForMathlib/CategoryTheory/Clan.lean\#L161},
\done{https://github.com/sinhp/HoTTLean/blob/TYPES2026/HoTTLean/ForMathlib/CategoryTheory/Clan.lean\#L473}]
  \label{lem:precomposition-BC}
  For any commutative square
\[\begin{tikzcd}
	{E'} & {B'} \\
	E & B
	\arrow["{{{f'}}}", from=1-1, to=1-2]
	\arrow["\phi", from=2-1, to=1-1]
	\arrow["f"', from=2-1, to=2-2]
	\arrow["\delta"', from=2-2, to=1-2]
\end{tikzcd}
\]
  If $f$ and $f'$ are both $\RR$-maps,
  then there are Beck--Chevalley (BC) natural transformations
  \[\begin{tikzcd}
	{\RR(E')} & {\RR(B')} \\
	{\RR(E)} & {\RR(B)}
	\arrow["{{f'_!}}", from=1-1, to=1-2]
	\arrow["{{\phi^*}}"', from=1-1, to=2-1]
	\arrow["{{\delta^*}}", from=1-2, to=2-2]
	\arrow["{{f_!}}"', from=2-1, to=2-2]
	\arrow["\alpha"{description}, Rightarrow, from=2-1, to=1-2]
\end{tikzcd}
  \quad
  \quad
  \begin{tikzcd}
	{\RR(E')} & {\RR(B')} \\
	{\RR(E)} & {\RR(B)}
	\arrow["{{f'_*}}", from=1-1, to=1-2]
	\arrow["{{\phi^*}}"', from=1-1, to=2-1]
	\arrow["{{\delta^*}}", from=1-2, to=2-2]
	\arrow["{{f_*}}"', from=2-1, to=2-2]
	\arrow["\beta"{description}, Rightarrow, from=1-2, to=2-1]
\end{tikzcd}\]
  Furthermore, if the square is a pullback
  then these are natural isomorphisms.
\end{lemma}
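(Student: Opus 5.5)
The plan is to construct $\alpha$ directly from the universal property of pullbacks, and to obtain $\beta$ as the mate of the canonical comparison between pullback functors. For the isomorphism claims, I would use the pasting lemma for pullbacks in the first case, and a Yoneda argument through the partial adjunctions in the second. Throughout, the commutativity $\phi \gg f' = f \gg \delta$ is the only hypothesis used for the existence of the transformations.

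For $\alpha$, take an object $z : Z \to E'$ of $\RR(E')$. Then $f_!\phi^* Z$ is the composite $E \times_{E'} Z \to E \xrightarrow{f} B$, which lies in $\RR$ by axioms (1) and (3). Its target $\delta^* f'_! Z$ is $B \times_{B'} Z$, where $Z$ maps to $B'$ via $z \gg f'$; this is again an $\RR$-map, being a pullback of an $\RR$-map. The component $\alpha_Z$ is the map into this pullback induced by the two legs $E\times_{E'}Z \to E \to B$ and $E \times_{E'} Z \to Z$. These agree in $B'$ precisely because $\phi \gg f' = f \gg \delta$. Naturality in $Z$ follows from uniqueness of maps into a pullback. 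If the square is a pullback, then pasting the pullback $E \times_{E'} Z$ with the given square exhibits $E\times_{E'}Z$ as a pullback of $z \gg f'$ along $\delta$, so $\alpha_Z$ is an isomorphism.

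For $\beta$, I would use the adjunctions $\partial f'^* \dashv \partial f'_*$ and $\partial f^* \dashv \partial f_*$ between local clans, which exist since $f, f' \in \RR$. Under these adjunctions, a transformation $\delta^* f'_* \Rightarrow f_* \phi^*$ corresponds to a transformation $f^*\delta^* f'_* \Rightarrow \phi^*$. I would take the latter to be the composite
\[ f^*\delta^* f'_* \iso \phi^* f'^* f'_* \xrightarrow{\ \phi^* \counit\ } \phi^* .\]
The first isomorphism is the canonical comparison of iterated pullbacks along the two equal composites $f \gg \delta = \phi \gg f'$. All objects stay in the local clans, since pullbacks of $\RR$-maps are $\RR$-maps. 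Naturality is then automatic from the mate construction.

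To prove that $\beta$ is invertible when the square is a pullback, I would use Yoneda in $\CC/B$, testing against arbitrary objects $w : W \to B$. This is where the full strength of partial adjoints is needed, exactly as in the proof of \Cref{prop:poly-up}. On one side,
\[\CC/B(W, \delta^* f'_* Z) \iso \CC/B'(\delta_! W, f'_*Z) \iso \CC/E'(f'^*\delta_!W, Z),\]
using the partial adjunctions $\delta_! \dashv_\partial \partial\delta^*$ and $f'^* \dashv_\partial \partial f'_*$. On the other side,
\[\CC/B(W, f_*\phi^*Z) \iso \CC/E(f^*W, \phi^*Z) \iso \CC/E'(\phi_! f^* W, Z).\]
Pasting with the pullback square gives $\phi_! f^* W \iso f'^* \delta_! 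W$ naturally in $W$. Hence the two hom-functors are naturally isomorphic, and so $\delta^*f'_*Z \iso f_*\phi^*Z$.

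The main obstacle is checking that the isomorphism produced by this Yoneda argument is actually $\beta_Z$, rather than merely some isomorphism. I would handle this by unwinding both constructions at $W = \delta^* f'_* Z$ and the identity. Both then reduce to the counit of $f'^* \dashv f'_*$ precomposed with the pasting comparison, and agreement follows from the triangle identities and naturality of the partial adjunctions.
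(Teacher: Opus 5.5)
Your proposal is correct, but it takes a different route from the paper's.

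\textbf{How the paper argues.} It defines both $\alpha$ and $\beta$ uniformly, as the left and right mates of the canonical isomorphism $\phi^* f'^* \iso f^*\delta^*$. It then obtains invertibility by transport. The functors $f_!$, $f^*$, $f_*$ commute, up to canonical isomorphism, with the Yoneda embeddings $\RR(X) \to \Psh(\CC)/y(X)$. Hence $\alpha$ and $\beta$ are restrictions of the corresponding mates in the locally cartesian closed category $\Psh(\CC)$. There the general conjugate calculus gives invertibility for pullback squares, since $y$ preserves pullbacks.

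\textbf{How you argue.} You stay inside $\CC$ throughout.
\begin{itemize}
\item For $\alpha$, you use the canonical comparison into $B\times_{B'}Z$ together with pullback pasting. You never check that this agrees with the mate, but the statement only asks for some BC transformation, and it does in fact agree.
\item For $\beta$, you run a representability argument in $\CC/B$ by chaining the partial adjunctions.
\end{itemize}

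\textbf{How the two relate.} Your argument is essentially the paper's with $\Psh(\CC)$ unfolded. Testing against arbitrary $W \to B$ is the same as testing against representables over $y(B)$. Verifying that $f_*$ commutes with $y$ requires exactly the naturality of the partial adjunctions in unrestricted objects that you invoke; for instance, $\delta_! W$ need not lie in $\RR$.

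\textbf{What each route buys.} The paper's route treats $\alpha$ and $\beta$ uniformly and reuses existing results on mates in LCC categories. The bookkeeping is absorbed into the claim that the functors ``commute with Yoneda,'' which suits the formalization. Yours is self-contained and avoids $\Psh(\CC)$, in line with the paper's stated aim of working directly in $\CC$. It also isolates precisely where partial adjunctions, rather than adjunctions restricted to the local clans, are needed. The cost is the explicit check that the Yoneda-produced isomorphism is $\beta_Z$ itself. You rightly flag this, and it goes through by transposing the identity at $W = \delta^* f'_* Z$.
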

\begin{proof}
  These are respectively
  defined as the left and right \emph{mates}
  (see
  \href{https://math.iisc.ac.in/~gadgil/PfsProgs25doc/Mathlib/CategoryTheory/Adjunction/Mates.html}{Mathlib}
  and \cite{hazratpour2024})
  of the following natural isomorphisms (which are inverses of one another)
\[
\begin{tikzcd}
	{\RR(E')} & {\RR(B')} \\
	{\RR(E)} & {\RR(B)}
	\arrow["{{{{\phi^*}}}}"', from=1-1, to=2-1]
	\arrow["{{{{f'^*}}}}"', from=1-2, to=1-1]
	\arrow["\Rightarrow"{marking, allow upside down}, draw=none, from=1-2, to=2-1]
	\arrow["{{{{\delta^*}}}}", from=1-2, to=2-2]
	\arrow["{{{{f^*}}}}"', from=2-1, to=2-2]
\end{tikzcd}
\quad
\quad
\begin{tikzcd}
	{\RR(E')} & {\RR(B')} \\
	{\RR(E)} & {\RR(B)}
	\arrow["{{{{\phi^*}}}}"', from=1-1, to=2-1]
	\arrow["{{{{f'^*}}}}"', from=1-2, to=1-1]
	\arrow["{{{{\delta^*}}}}", from=1-2, to=2-2]
	\arrow["\Rightarrow"{marking, allow upside down}, draw=none, from=2-1, to=1-2]
	\arrow["{{{{f^*}}}}"', from=2-1, to=2-2]
\end{tikzcd}
\]
The usual approach to such a proof in an LCC category relies on the theory of \emph{conjugate}, as in \cite{hazratpour2024}. In our case, we work with mate calculus instead.


For a morphism $f$ in a $\pi$-clan $\RR$,
one can verify that
$f_!, f_* : \RR(E) \to \RR(B)$
and $f^* : \RR(B) \to \RR(E)$ all commute (up to canonical natural isomorphism)
with Yoneda embeddings
$y : \RR(X) \to \Psh(\CC) / y(X)$. For example, the diagram for $f_*$ is 
\[
\begin{tikzcd}
    {\mathcal R}(E)\ar[r,"{f_*}"]\ar[d,hook,"y"] &  {\mathcal R}(B) \ar[d,hook,"y"]\\
    \Psh(\mathcal C)/y(E) \ar[r,"{f_*}"]& \Psh(\mathcal C)/y(B)
\end{tikzcd}
\]

It follows that $\alpha$ and $\beta$
are the restrictions of the corresponding mates
$\tilde \alpha$ and $\tilde \beta$ in $\Psh(\CC)$.
\[\begin{tikzcd}
	{\Psh(\CC) /y(E')} & {\Psh(\CC) / y(B')} \\
	{\Psh(\CC) / y(E)} & {\Psh(\CC) / y(B)}
	\arrow["{{y(f')_!}}", from=1-1, to=1-2]
	\arrow["{{y(\phi)^*}}"', from=1-1, to=2-1]
	\arrow["{{y(\delta)^*}}", from=1-2, to=2-2]
	\arrow["{{y(f)_!}}"', from=2-1, to=2-2]
	\arrow["\tilde \alpha"{description}, Rightarrow, from=2-1, to=1-2]
\end{tikzcd}
  \quad
  \quad
  \begin{tikzcd}
	{\Psh(\CC) /y(E')} & {\Psh(\CC) / y(B')} \\
	{\Psh(\CC) / y(E)} & {\Psh(\CC) / y(B)}
	\arrow["{{y(f')_*}}", from=1-1, to=1-2]
	\arrow["{{y(\phi)^*}}"', from=1-1, to=2-1]
	\arrow["{{y(\delta)^*}}", from=1-2, to=2-2]
	\arrow["{{y(f)_*}}"', from=2-1, to=2-2]
	\arrow["\tilde \beta"{description}, Rightarrow, from=1-2, to=2-1]
\end{tikzcd}\]
Since $\Psh(\CC)$ is LCC, the general theory of the calculus \cite{hazratpour2024} of conjugates applies, which gives that 
$\tilde \alpha$ and $\tilde \beta$ are isomorphisms
when the commutative square $y(\phi) \gg y(f') = y(f) \gg y(\delta)$ is a pullback.
\end{proof}

\begin{proposition} \label{prop:vertical-nat-trans-computation}
  For any triangle
\[\begin{tikzcd}
	{E} && E' \\
	& B
	\arrow["\rho", from=1-1, to=1-3]
	\arrow["{f}"', from=1-1, to=2-2]
	\arrow["f'", from=1-3, to=2-2]
\end{tikzcd}\]
  If $f$ and $f'$ are $\RR$-maps,
  there is a natural transformation\footnote{
    The definition of $v$ is formalized in HoTTLean (\done{https://github.com/sinhp/HoTTLean/blob/TYPES2026/HoTTLean/ForMathlib/CategoryTheory/Polynomial.lean\#L465}),
    but the computation rules for its components are not.
  }
  $v : P_{f'} \to P_{f}$ given by the whiskering diagram
\[\begin{tikzcd}
	\CC & {\RR(E')} & {\RR(B)} \\
	& {\RR(E)} & {\RR(B)} & \CC
	\arrow["{E'^*}", from=1-1, to=1-2]
	\arrow[""{name=0, anchor=center, inner sep=0}, "{E^*}"', from=1-1, to=2-2]
	\arrow["{f'_*}", from=1-2, to=1-3]
	\arrow["{\rho^*}", from=1-2, to=2-2]
	\arrow["\Rightarrow"{marking, allow upside down}, draw=none, from=1-3, to=2-2]
	\arrow["{\id^*}"', equals, from=1-3, to=2-3]
	\arrow[""{name=1, anchor=center, inner sep=0}, "{B_!}", from=1-3, to=2-4]
	\arrow["{f_*}"', from=2-2, to=2-3]
	\arrow["{B_!}"', from=2-3, to=2-4]
	\arrow["\iso"{marking, allow upside down}, draw=none, from=0, to=1-2]    
	\arrow["{=}"{marking, allow upside down}, draw=none, from=1, to=2-3]
\end{tikzcd}\]
  where the middle cell is given by BC
  (\Cref{lem:precomposition-BC}).

  Furthermore,
  on components $v_X$ can be computed in relation to the universal property
  in \Cref{prop:poly-up} as follows.
  For a map $t : \Gamma \to P_{f'} X$,
  the projections satisfy
  \[\fst (t \gg v_X) = \fst (t)
  \quad
  \quad
  \snd(t \gg v_X) = (\fst(t) \times_B \rho) \gg \snd(t)
  \]
  where $\fst(t) \times_B \rho$ is depicted in the following diagram
  \[\begin{tikzcd}
	& {\fst(t) \times_B f} & E \\
	X & {\fst(t) \times_B f'} & {E'} \\
	& \Gamma & B
	\arrow[from=1-2, to=1-3]
	\arrow["{\snd(t \gg v_X)}"', from=1-2, to=2-1]
	\arrow["{\fst(t) \times_B \rho}", from=1-2, to=2-2]
	\arrow["\rho", from=1-3, to=2-3]
	\arrow["{\snd(t)}", from=2-2, to=2-1]
	\arrow[from=2-2, to=2-3]
	\arrow[from=2-2, to=3-2]
	\arrow["\lrcorner"{anchor=center, pos=0.125}, draw=none, from=2-2, to=3-3]
	\arrow["{f'}", from=2-3, to=3-3]
	\arrow["{\fst(t)}"', from=3-2, to=3-3]
\end{tikzcd}\]
\end{proposition}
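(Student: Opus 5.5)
The construction of $v$ is essentially already given by the whiskering diagram, so I would first check that each cell makes sense. The triangle $\rho \gg f' = f$ is a commutative square with $\delta = \id_B$, and both $f, f'$ are $\RR$-maps, so \Cref{lem:precomposition-BC} supplies the Beck--Chevalley transformation $\beta : f'_* \Rightarrow \rho^* \gg f_*$, viewing $\id^*$ as the identity on $\RR(B)$. The left triangle is the canonical isomorphism $E'^* \gg \rho^* \iso E^*$ coming from pasting pullbacks, since $\rho \gg E' = E$ as maps to $1$. The right triangle is an equality. Whiskering and pasting these gives $v : P_{f'} \Rightarrow P_f$, and naturality is automatic.

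For the computation of components, I would work through the universal property of \Cref{prop:poly-up}. First I would show $\fst(t \gg v_X) = \fst(t)$. The map $v_X$ is $B_!$ applied to a morphism in $\RR(B)$, so it lies over $B$, i.e.\ $v_X \gg \fstProj_f = \fstProj_{f'}$. Hence $\fst(t \gg v_X) = t \gg v_X \gg \fstProj_f = t \gg \fstProj_{f'} = \fst(t)$.

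For the second projection, naturality in \Cref{prop:poly-up} gives $\snd(t \gg v_X) = (t \times_B f) \gg \snd(v_X)$. In the same way, $\snd(t) = (t \times_B f') \gg \sndProj'$. So it suffices to treat the universal case $t = \id_{P_{f'}X}$ and prove
\[ \snd(v_X) = (\fstProj_{f'} \times_B \rho) \gg \sndProj'. \]
The general statement then follows by pasting pullbacks. The needed fact is that $(t \times_B f) \gg (\fstProj_{f'} \times_B \rho) = (\fst(t) \times_B \rho) \gg (t \times_B f')$, both sides being induced maps into the pullback $\fstProj_{f'} \times_B f'$.

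The universal case is the main obstacle, since it requires unfolding the mate $\beta$. By definition, $\snd(v_X)$ is the transpose of $v_X$ under the chain $\CC/B(-, f_*E^*X) \iso \CC/E(f^*-, E^*X) \iso \CC(E_! f^* -, X)$. Transposing a composite $g \gg f_*(\text{stuff})$ across $f^* \dashv f_*$ gives $f^* g \gg \epsilon$. So I would write $\beta$ explicitly as a mate:
\[ f'_* \xrightarrow{\eta} f_* f^* f'_* \iso f_* \rho^* f'^* f'_* \xrightarrow{f_* \rho^* \epsilon'} f_* \rho^*. \]
Transposing it across $f^* \dashv f_*$ then yields $f^* f'_* \iso \rho^* f'^* f'_* \xrightarrow{\rho^*\epsilon'} \rho^*$, using a triangle identity. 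Next I would post-compose with $E_!$ and the counit of $E_! \dashv E^*$, as in the explicit formula for $\sndProj$ at the end of the proof of \Cref{prop:poly-up}. The factor $\rho^*$ becomes precomposition with the pullback map $\fstProj_{f'} \times_B \rho$, and $\epsilon'$ followed by the counit gives $\sndProj'$.

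The delicate bookkeeping is the coherence isomorphisms $f^* \iso \rho^* f'^*$ and $E'^*\rho^* \iso E^*$. Both should be identities on underlying objects once pullbacks are identified via the canonical comparison maps. Because of that, I would verify the equation by postcomposing with the pullback projections rather than tracking the natural isomorphisms abstractly.
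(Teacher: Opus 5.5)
Your proposal is correct and follows the paper's proof essentially step for step. You unfold $v_X = B_!(\nu_X)$ via the explicit mate formula, and you reduce general $t$ to the universal identity $(v_X \times_B f) \gg \sndProj = (\fstProj' \times_B \rho) \gg \sndProj'$ by the same pullback-pasting argument. You then transpose across $f^* \dashv f_*$ using a triangle identity, and your step of turning $\rho^*$ into precomposition with the pullback projection is what the paper does by transposing across $\rho_! \dashv \rho^*$ into $\CC / E'$. One small slip: the transpose of $g \gg f_*(h)$ is $f^* g \gg \epsilon \gg h$, not $f^* g \gg \epsilon$.
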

\begin{proof}
  Let us write $\fstProj' = f_* E^* X$ for the polynomial functor applied to $X$
  as an object in the local clan $\RR(B)$ (as opposed to an object in $\CC$).
  By unfolding the defintions of $v$ and the BC natural transformations,
  one can explicitly write out the component at $X$ as $v_X = B_! (\nu_X)$ where
  $\nu_X : \fstProj' \to \fstProj$ is the following composition in the local clan $\RR(B)$
  \begin{equation} \label{eq:vertical-nat-trans-computation-lemma-0}
  \begin{tikzcd}[column sep = large]
    {\fstProj'} && \fstProj \\
    {f_* f^*\fstProj'} & {f_*\rho^* f'^* \fstProj'} & {f_* \rho^* E'^* X}
    \arrow["{\nu_X}", from=1-1, to=1-3]
    \arrow["{\unit_{\fstProj'}}"', from=1-1, to=2-1]
    \arrow["{f_* n_{\fstProj'}}"', from=2-1, to=2-2]
    \arrow["\iso"{description}, shift left=3, draw=none, from=2-1, to=2-2]
    \arrow["{f_* \rho^* \counit_{E'^* X}}"', from=2-2, to=2-3]
    \arrow["{f_* m_X^{-1}}"', from=2-3, to=1-3]
    \arrow["\iso"{description}, shift left=3, draw=none, from=2-3, to=1-3]
  \end{tikzcd}
  \end{equation}
  Here $n : f^* \iso \rho^* f'^*$ and $m : E^* \iso \rho^* E'^*$
  are the canonical natural isomorphisms due to commutativity,
  $\unit : \id_{\RR(B)} \to f_* f^*$ is the unit of the pullback-pushforward adjunction for $f$,
  and $\counit : f'^* f'_* \to \id_{\RR(E')}$ is the counit of the pullback-pushforward adjunction for $f'$.

  By definition of $v_X = B_! (\nu_X)$ being a morphism in the local clan $\RR(X)$,
  the following triangle commutes
  \begin{equation} \label{eq:fstProj}
  \begin{tikzcd}
    {P_{f'} X} & {P_f X} \\
    & B
    \arrow["{v_X}", from=1-1, to=1-2]
    \arrow["{\fstProj'}"', from=1-1, to=2-2]
    \arrow["\fstProj", from=1-2, to=2-2]
  \end{tikzcd}
  \end{equation}
  We would also like to establish the following equation relating
  $\sndProj : \fstProj \times_B f$,
  and $\sndProj' : \fstProj' \times_B f'$.
  \begin{equation} \label{eq:sndProj}
  \begin{tikzcd}
    {\fstProj' \times_B f} & {\fstProj \times_B f} \\
    {\fstProj' \times_B f'} & X
    \arrow["{v_X \times_B f}", from=1-1, to=1-2]
    \arrow["{\fstProj' \times_B \rho}"', from=1-1, to=2-1]
    \arrow["\sndProj", from=1-2, to=2-2]
    \arrow["{\sndProj'}"', from=2-1, to=2-2]
  \end{tikzcd}
  \end{equation}
  which requires a careful proof.
  Before proving \Cref{eq:sndProj},
  let us use these two equations to show the desired result
  about $\fst(t \gg v_X)$ and $\snd(t \gg v_X)$.
  Using \Cref{eq:fstProj},
  \[ \fst(t \gg v_X) = t \gg v_X \gg \fstProj = t \gg \fstProj' = \fst(t) \]
  Using \Cref{eq:sndProj}, \Cref{prop:poly-up}, and the following diagram
  \[
  \begin{tikzcd}
	{\fst(t) \times_B f} & {\fstProj' \times_B f} & E \\
	{\fst(t) \times_B f'} & {\fstProj' \times_B f'} & {E'} \\
	\Gamma & {P_{f'} X} & B
	\arrow["{t \times_B f}", from=1-1, to=1-2]
	\arrow["{\fst(t) \times_B \rho}"', from=1-1, to=2-1]
	\arrow[from=1-2, to=1-3]
	\arrow["{\fstProj' \times_B \rho}"', from=1-2, to=2-2]
	\arrow["\rho"', from=1-3, to=2-3]
	\arrow["f", bend left, from=1-3, to=3-3]
	\arrow["{t \times_B f'}"', from=2-1, to=2-2]
	\arrow[from=2-1, to=3-1]
	\arrow[from=2-2, to=2-3]
	\arrow[from=2-2, to=3-2]
	\arrow["{f'}"', from=2-3, to=3-3]
	\arrow["t", from=3-1, to=3-2]
	\arrow["{\fst(t)}"', bend right, from=3-1, to=3-3]
	\arrow["{\fstProj'}", from=3-2, to=3-3]
  \end{tikzcd}
  \]
  we can conclude
  \begin{align*}
    & \snd(t \gg v_X) \\
    = &\, ((t \gg v_X) \times_B f) \gg \sndProj \\
    = &\, (t \times_B f) \gg (v_X \times_B f) \gg \sndProj \\
    = &\, (t \times_B f) \gg (\fstProj' \times_B \rho) \gg \sndProj' \\
    = &\, (\fst(t) \times_B \rho) \gg (t \times_B f') \gg \sndProj' \\
    = &\, (\fst(t) \times_B \rho) \gg \snd(t)
  \end{align*}

  In order to prove \Cref{eq:sndProj},
  let us also consider the following
  \begin{equation}\label{eq:vertical-nat-trans-computation-lemma-1}
  \begin{tikzcd}
    {\fstProj' \times_B f} & {\fstProj \times_B f} & {E \times X} \\
    {\fstProj' \times_B f'} && {E' \times X}
    \arrow["{v_X \times_B f}", from=1-1, to=1-2]
    \arrow["{\fstProj' \times_B \rho}"', from=1-1, to=2-1]
    \arrow["{\epsilon_X}", from=1-2, to=1-3]
    \arrow["{\rho \times X}", from=1-3, to=2-3]
    \arrow["{\epsilon'_X}"', from=2-1, to=2-3]
  \end{tikzcd}
  \end{equation}
  Recalling the formula $\sndProj = \epsilon_X \gg E^* X$
  given in the proof of \Cref{prop:poly-up},
  and assuming \Cref{eq:vertical-nat-trans-computation-lemma-1},
  we can calculate
  \begin{align*}
    & (v_X \times_B f) \gg \sndProj \\
    = \, & (v_X \times_B f) \gg \epsilon_X \gg E^* X \\
    = \, & (v_X \times_B f) \gg \epsilon_X \gg (\rho \times X) \gg E'^* X \\
    = \, & (\fstProj' \times_B \rho) \gg \epsilon'_X \gg E'^* X \\
    = \, & (\fstProj' \times_B \rho) \gg \sndProj'
  \end{align*}
  It remains to prove \Cref{eq:vertical-nat-trans-computation-lemma-1}.
  Starting with \Cref{eq:vertical-nat-trans-computation-lemma-0}
  in $\CC / B$,
  let us we reverse the direction of the isomorphism $f_* m_X$,
  and recognize ${\unit_{\fstProj'}} \gg {f_* n_{\fstProj'}}$
  as the transpose of
  $n_{\fstProj'} : f^* \fstProj' \to \rho^* f'^* \fstProj'$ in $\CC / E$
  under the partial adjunction $f^* \dashv f_*$
  \[
  \begin{tikzcd}[column sep = large]
    & {\fstProj'} & \fstProj & \fstProj \\
    {f_* f^*\fstProj'} & {f_*\rho^* f'^* \fstProj'} && {f_* \rho^* E'^* X}
    \arrow["{{{\nu_X}}}", from=1-2, to=1-3]
    \arrow["{{{\unit_{\fstProj'}}}}"', from=1-2, to=2-1]
    \arrow["{{\tilde{n}_{\fstProj'}}}", from=1-2, to=2-2]
    \arrow[equals, from=1-3, to=1-4]
    \arrow["{{{f_* m_X}}}", from=1-4, to=2-4]
    \arrow["{{{f_* n_{\fstProj'}}}}"', from=2-1, to=2-2]
    \arrow["{{{f_* \rho^* \counit_{E'^* X}}}}"', from=2-2, to=2-4]
  \end{tikzcd}\]
  Now taking the transpose of the diagram under the partial adjunction $f^* \dashv f_*$
  we obtain the following diagram in $\CC / E$
  \[
  \begin{tikzcd}[column sep = large]
    {f^* \fstProj'} & {f^*\fstProj} & {E^* X} \\
    {\rho^* f'^* \fstProj'} && {\rho^* E'^* X}
    \arrow["{f^* \nu_X}", from=1-1, to=1-2]
    \arrow["{{n}_{\fstProj'}}"', from=1-1, to=2-1]
    \arrow["{\counit_{E^*X}}", from=1-2, to=1-3]
    \arrow["{m_X}", from=1-3, to=2-3]
    \arrow["{\rho^* \counit_{E'^* X}}"', from=2-1, to=2-3]
  \end{tikzcd}
  \]
  Next, taking the transpose of the diagram under
  the partial adjunction $\rho_! \vdash \rho^*$
  (where $\rho_! : \CC / E \to \CC / E'$ and $\rho^* : \RR(E') \to \RR(E)$),
  we obtain the following diagram in $\CC / E'$
  \[
  \begin{tikzcd}[column sep = large]
    {\rho_! f^* \fstProj'} & {\rho_! f^*\fstProj} & {\rho_!E^* X} \\
    {f'^* \fstProj'} && {E'^* X}
    \arrow["{{\rho_! f^* \nu_X}}", from=1-1, to=1-2]
    \arrow["{\hat{n}_{\fstProj'}}"', from=1-1, to=2-1]
    \arrow["{{\rho_! \counit_{E^*X}}}", from=1-2, to=1-3]
    \arrow["{\hat{m}_X}", from=1-3, to=2-3]
    \arrow["{{\rho^* \counit_{E'^* X}}}"', from=2-1, to=2-3]
  \end{tikzcd}
  \]
  When we treat this as a diagram in $\CC$,
  this is precisely the required equation \Cref{eq:vertical-nat-trans-computation-lemma-1}.
\end{proof}

The natural isomorphism defined in the following lemma
is called the ``distributivity law''
in \cite{gambino2013}.
\begin{lemma} \label{lem:pushforward-along-pullback-morphism-action}
  Let $g : Z \to Y$ and $f : Y \to X$ be $\RR$-maps in a $\pi$-clan $(\CC,\RR)$.
  Consider the diagram
  \[\begin{tikzcd}
	Z & {f \times_X q} & {f_* Z} \\
	& Y & X
	\arrow["g"', from=1-1, to=2-2]
	\arrow["{{\epsilon}}"', from=1-2, to=1-1]
	\arrow["{{f'}}", from=1-2, to=1-3]
	\arrow[from=1-2, to=2-2]
	\arrow["\lrcorner"{anchor=center, pos=0.125}, draw=none, from=1-2, to=2-3]
	\arrow["{{q \, := \, f_* g}}", from=1-3, to=2-3]
	\arrow["f"', from=2-2, to=2-3]
  \end{tikzcd}\]
  where $\epsilon : f \times_X q \to Z$
  is the counit of the adjunction $f^* : \RR(X) \dashv \RR(Y) : f_*$.
  Then there is a canonical natural isomorphism
  $g_! \gg f_* \iso \epsilon^* \gg f'_* \gg q_!$
  \begin{equation} \label{eq:pushforward-along-pullback-morphism-action-2}
    \begin{tikzcd}
      {\RR(Z)} & {\RR (f \times_X q)} & {\RR (f_* Z) } \\
      & {\RR (Y)} & {\RR (X)}
      \arrow["{{\epsilon^*}}", from=1-1, to=1-2]
      \arrow["{{g_!}}"', from=1-1, to=2-2]
      \arrow["{{{f'_*}}}", from=1-2, to=1-3]
      \arrow["{{q_!}}", from=1-3, to=2-3]
      \arrow["{{{f}_*}}"', from=2-2, to=2-3]
    \end{tikzcd}
  \end{equation}
\end{lemma}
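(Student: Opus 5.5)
We prove the distributivity law by a Yoneda argument. We exhibit, for each $\RR$-object $W \to X$ of $\RR(X)$ (indeed for each object of $\CC/X$, using the partial adjunctions), a bijection
\[ \CC/X(W, f_*(g_! A)) \iso \CC/X(W, q_!\, f'_*\, \epsilon^* A) \]
natural in $W$ and $A \in \RR(Z)$. Since both sides are representable and the objects lie in the full subcategory $\RR(X)$, this gives the natural isomorphism. Both composites land in $\RR(X)$: the right side because $\epsilon^*$ is pullback of an $\RR$-map, $f'$ is an $\RR$-map (a pullback of $f$), and $q_!$ preserves $\RR$ since $q = f_* g \in \RR$ by axiom (4) and $\RR$ is closed under composition.

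\textbf{Computing the left side.} By the partial adjunction $f^* \dashv_\partial \partial f_*$, maps $W \to f_*(g_! A)$ over $X$ correspond to maps $f^* W \to g_! A$ over $Y$. Such a map is a map $u : f^*W \to Z$ over $Y$ together with a lift of $u$ through $A \to Z$.

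\textbf{Computing the right side.} A map $W \to q_!(f'_* \epsilon^* A)$ over $X$ is first a map $w : W \to f_* Z$ over $X$, together with a map $W \to f'_*\epsilon^* A$ over $f_*Z$. The first datum $w$ corresponds, by the same partial adjunction, to $u = (w \times_X f) \gg \epsilon : f^*W \to Z$ over $Y$. The second datum corresponds, via $f'^* \dashv_\partial \partial f'_*$, to a map $f'^* w \to \epsilon^* A$ over $f\times_X q$. Now $f'^*w$ is exactly $f^*W$, since pulling back along $w$ and then $f'$ equals pulling back along $f$. So the second datum amounts to a lift of $(f^* w)\gg\epsilon = u$ through $A \to Z$. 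Hence both sides consist of pairs $(u, \text{lift of } u)$, giving the bijection.

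\textbf{Naturality and the main obstacle.} The main obstacle is checking naturality in $W$ and $A$ rigorously. The pasting identifications (pullback pasting, the transpose formula $u = (w\times_X f)\gg\epsilon$) are only canonical up to isomorphism and must be compatible. To handle this I would first try the approach used for \Cref{lem:precomposition-BC}: embed via Yoneda $y : \RR(-) \to \Psh(\CC)/y(-)$, which commutes with $f_*$, $f^*$ and $g_!$ up to canonical isomorphism. The distributivity law then follows from the known LCC distributivity isomorphism of \cite{gambino2013} in $\Psh(\CC)$, and the full faithfulness of $y$ on $\RR(X)$ reflects it to an isomorphism in $\RR(X)$. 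For this we must note that $y$ also carries the counit $\epsilon$ to the presheaf counit, so that the diagram in \Cref{eq:pushforward-along-pullback-morphism-action-2} is sent to the presheaf one.
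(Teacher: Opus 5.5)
Your proof is correct, and it supplies something the paper does not: the paper gives no argument for this lemma and simply defers to \cite[Proposition 2.4.15]{joyal2017}.

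Your first route is the standard hom-set (``type-theoretic axiom of choice'') proof of distributivity, and it carries over verbatim to partial adjunctions. On both sides, a map out of $W$ amounts to a map $u : f^*W \to Z$ over $Y$ together with a lift $\tilde u : f^*W \to A$ of $u$. You correctly identify the two points that need care. First, $w : W \to f_*Z$ need not be an $\RR$-map, so you genuinely need the partial adjunction $f'^* \dashv_\partial \partial f'_*$ with arbitrary domain, not the restricted adjunction on $\RR(f_*Z)$. Second, $f'^*w \iso f^*W$ over $f \times_X q$, with structure map $f^*w$.

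Your worry about naturality is overstated, though. Each step is a bijection natural in $W$ and $A$: adjunction bijections are natural in both variables, and the pasting isomorphism is the canonical comparison of pullbacks. Under these identifications the lift $\tilde u$ passes through unchanged. So precomposition with $h : W' \to W$ acts as $\tilde u \mapsto f^*h \gg \tilde u$, and postcomposition with $\phi : A \to A'$ acts as $\tilde u \mapsto \tilde u \gg \phi$, on both sides. Yoneda in $\CC/X$ then finishes without any detour.

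The presheaf fallback is also sound. It is the same device the paper uses in \Cref{lem:precomposition-BC}, and it reduces the claim to the LCC distributivity law of \cite{gambino2013}. It gets you the coherence bookkeeping for free, but it costs a check that $y(f_*V) \iso y(f)_* y(V)$ compatibly with the counit, which itself comes from the partial adjunction on representables. The direct route stays inside $\CC$ and uses only the $\pi$-clan axioms. That fits the paper's stated preference for avoiding $\Psh(\CC)$ and its universe-level overhead in the formalization.
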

\begin{proof}
  We refer to \cite[Proposition 2.4.15.]{joyal2017}.
\end{proof}

We can use this isomorphism
  to analyze the action of $f_*$
  on an $\RR$-map $h : W \to Z$ between objects $W$ and $Z$ in $\RR(Y)$.
  Let us write $H$ for the object in the local clan $\RR(Z)$
  with underlying morphism $h$,
  $1_Z$ for the terminal object of $\RR(Z)$
  with underlying morphism $\id_Z$,
  and $\underline{H} : H \to 1_Z$ for the map such that $g_! \underline H = h$.
  Then 
  \begin{equation} \label{eq:pushforward-along-pullback-morphism-action-1}
  \begin{tikzcd}
    {f_*W} & {f_* (g_! H)} & {q_! (f'_* (\epsilon^* H))} \\
    {f_* Z} & {f_*(g_! 1_Z)} & {q_! (f'_* (\epsilon^* 1_Z))} & {q_! 1_{f_* Z}}
    \arrow[equals, from=1-1, to=1-2]
    \arrow["{{f_* h}}"', from=1-1, to=2-1]
    \arrow["\iso"{marking, allow upside down}, draw=none, from=1-2, to=1-3]
    \arrow["{{f_*(g_! \underline H)}}"', from=1-2, to=2-2]
    \arrow["{{q_! (f'_* (\epsilon^* \underline H))}}"', from=1-3, to=2-3]
    \arrow["{q_!(\underline {f'_*(\epsilon^* H)})}"{description}, from=1-3, to=2-4]
    \arrow[equals, from=2-1, to=2-2]
    \arrow["\iso"{description}, draw=none, from=2-2, to=2-3]
    \arrow["\iso"{description}, draw=none, from=2-3, to=2-4]
  \end{tikzcd}
\end{equation}

Thus, we can record the following result.

\begin{corollary} \label{cor:pushforward-stability}
  Under the assumptions and notation of
  \Cref{lem:pushforward-along-pullback-morphism-action},
  suppose $h : W \to Z$ is an $\RR$-map
  between objects $W$ and $Z$ in $\RR(Y)$.
  For a class of maps $\HH$ that is stable under
  pre- and post-composition with isomorphisms,
  we have $f_* h$ is an $\HH$-map if and only if  
  the object $f'_* (\epsilon^* h)$ in $\RR(f_* Z)$
  has an underlying map in $\HH$.
\end{corollary}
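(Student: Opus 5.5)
The plan is to read the corollary off the commutative diagram \Cref{eq:pushforward-along-pullback-morphism-action-1}, which exhibits $f_* h$ and $q_!(\underline{f'_*(\epsilon^* H)})$ as isomorphic arrows. First I will make that diagram precise. View $h : W \to Z$ as the object $H \in \RR(Z)$, which is legitimate since $h \in \RR$, together with the map $\underline H : H \to 1_Z$ satisfying $g_! \underline H = h$. Likewise $g_! 1_Z$ is $Z$ regarded as an object of $\RR(Y)$. Applying the natural isomorphism $g_! \gg f_* \iso \epsilon^* \gg f'_* \gg q_!$ of \Cref{lem:pushforward-along-pullback-morphism-action} to the morphism $\underline H$ and using naturality gives a commutative square in $\RR(X)$. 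Its left side is $f_*(g_! \underline H) = f_* h$, its right side is $q_!(f'_*(\epsilon^*\underline H))$, and its horizontal sides are isomorphisms.

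Next I identify the target of the right side. Pullback functors preserve terminal objects, so $\epsilon^* 1_Z \iso 1_{f\times_X q}$. The right adjoint $f'_*$ also preserves terminal objects, so $f'_* 1 \iso 1_{f_* Z}$. Because $1_{f_*Z}$ is terminal, the composite $q_!(f'_*(\epsilon^*\underline H))$ followed by these isomorphisms equals $q_!(\underline{f'_*(\epsilon^* H)})$, the unique map to the terminal object. Finally, $q_!$ sends the map from an object to $1_{f_*Z}$ to the underlying morphism of that object. So the right-hand arrow is, up to isomorphism on both ends, exactly the underlying map of the object $f'_*(\epsilon^* H)$ of $\RR(f_*Z)$.

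Passing to $\CC$ along the forgetful functors, we obtain isomorphisms $u$ and $w$ with $f_* h \gg w = u \gg k$, where $k$ is the underlying map of $f'_*(\epsilon^* h)$. Since $\HH$ is stable under pre- and post-composition with isomorphisms, $f_* h \in \HH$ if and only if $k \in \HH$, in both directions.

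I expect the main obstacle to be bookkeeping rather than any conceptual step. One must check that the isomorphisms in the bottom row are compatible, so that the square genuinely commutes with the stated right-hand arrow. This holds automatically because every map into a terminal object is unique. One must also check that the forgetful functors $\RR(-) \to \CC$ send the isomorphisms of local clans to isomorphisms in $\CC$, which is immediate since functors preserve isomorphisms.
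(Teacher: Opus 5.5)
Your proposal is correct and follows the paper's argument. The paper likewise applies the distributivity isomorphism of \Cref{lem:pushforward-along-pullback-morphism-action} to $\underline H : H \to 1_Z$ to get the square \eqref{eq:pushforward-along-pullback-morphism-action-1}, identifies its right edge with $q_!(\underline{f'_*(\epsilon^* H)})$ via $q_!(f'_*(\epsilon^* 1_Z)) \iso q_! 1_{f_*Z}$, and reads off the corollary from isomorphism-invariance of $\HH$; the one thing you add is the justification of that bottom-row isomorphism (pullback and the right adjoint $f'_*$ preserve terminal objects), which the paper leaves implicit.
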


\section{Algebraic semantics}\label{sec:StrSem}


Fix a $\pi$-clan $(\CC, \RR)$.
$\RR$ is a class of distinguished families,
where each family (up to isomorphism)
can be thought of as a series of semantic context extensions
$\Gamma . A_1 . \cdots . A_n \to \Gamma$.
This view is justified later in
\Cref{thm:elementary-universe-to-algebraic-universe} and
\Cref{cor:elementary-universe-hierarchy-to-algebraic}.

\begin{definition}[\done{https://github.com/sinhp/HoTTLean/blob/TYPES2026/HoTTLean/Model/Structured/StructuredUniverse.lean\#L20}]
  \label{def:algebraic-universe}
  An \emph{algebraic universe} in $(\CC,\RR)$ is a universe in
  $\CC$ which is also an $\RR$-map.
  We will just say a universe $t$ is $\RR$-algebraic, or algebraic when the $\pi$-clan
  is clear from the context.
\end{definition}

We replicate the type formers described in \cite{awodey2025} in this setting,
making use of our general theory of polynomial functors.

\begin{definition}[$\Unit$-types]
  Suppose $\CC$ has a terminal object $1$.
  An algebraic $\Unit$-type structure on an algebraic universe
  $\tp:\Tm\to \Ty$ consists of maps
  $\Unit : 1 \to \Tm$ and $\unit : 1 \to \Ty$, such that
  \[\begin{tikzcd}
    1 & \Tm \\
    1 & \Ty
    \arrow["\unit", from=1-1, to=1-2]
    \arrow[equals, from=1-1, to=2-1]
    \arrow["\tp", from=1-2, to=2-2]
    \arrow["\Unit"', from=2-1, to=2-2]
  \end{tikzcd}\]
\end{definition}

\begin{definition}[$\Pi$-types, \done{https://github.com/sinhp/HoTTLean/blob/TYPES2026/HoTTLean/Model/Structured/StructuredUniverse.lean\#L359}]
    An algebraic $\Pi$-type structure on an
    algebraic universe $\tp:\Tm\to \Ty$ consists of maps
    $\Pi : P_\tp \Ty \to \Ty$ and
    $\lam:P_\tp \Tm\to \Tm$, such that
    \[ \begin{tikzcd}
      {P_{\tp}{\Tm}} & {\Tm} \\
      {P_{\tp}{\Ty}} & {\Ty}
      \arrow["\lam", from=1-1, to=1-2]
      \arrow["{P_{\tp}{\tp}}"', from=1-1, to=2-1]
      \arrow["\tp", from=1-2, to=2-2]
      \arrow["\Pi"', from=2-1, to=2-2]
    \end{tikzcd} \]
    is a pullback square.
\end{definition}


\begin{definition}[$\Sigma$-types, \done{https://github.com/sinhp/HoTTLean/blob/TYPES2026/HoTTLean/Model/Structured/StructuredUniverse.lean\#L651}]
    An algebraic $\Sigma$-type structure on an
    algebraic universe $\tp:\Tm\to \Ty$ consists of maps
    $\Sigma : P_\tp \Ty \to \Ty$ and $\pair: \compDom \to \Tm$,
    such that
    \[
    \begin{tikzcd}
        \compDom \ar[d,"\tp \pcomp \tp",swap]\ar[r,"\pair"] & \Tm \ar[d,"\tp"]\\
        P_\tp\Ty \ar[r,swap,"\Sigma"]& \Ty
    \end{tikzcd}
    \]
    is a pullback square.
    Here, we are using polynomial composition $\tp \pcomp \tp$
    from \Cref{def:pcomp}.
\end{definition}

\begin{definition}[Weak pullback, \done{https://github.com/sinhp/HoTTLean/blob/TYPES2026/HoTTLean/ForMathlib/CategoryTheory/WeakPullback.lean\#L10}]
  Consider the following square
  \[
  \begin{tikzcd}
    P & X \\
    Y & Z
    \arrow["{g'}", from=1-1, to=1-2]
    \arrow["{f'}"', from=1-1, to=2-1]
    \arrow["f", from=1-2, to=2-2]
    \arrow["g"', from=2-1, to=2-2]
  \end{tikzcd}
  \]
  A \emph{weak pullback structure} $l_\bullet$
  for this square consists of,
  for each cone $(W,x,y)$,
  \[
  \begin{tikzcd}
    W \\
    & P & X \\
    & Y & Z
    \arrow["x", bend left, from=1-1, to=2-3]
    \arrow["y"', bend right, from=1-1, to=3-2]
    \arrow["{g'}", from=2-2, to=2-3]
    \arrow["{f'}"', from=2-2, to=3-2]
    \arrow["f", from=2-3, to=3-3]
    \arrow["g"', from=3-2, to=3-3]
  \end{tikzcd}
  \]
  a chosen lift $l_W : W \to P$
  satisfying $l_W \gg g' = x$ and $l_W \gg f' = y$.
  We say that the weak pullback structure is
  \emph{coherent} when for any
  cone morphism $\sigma : V \to W$,
  the chosen lifts commute $\sigma \gg l_W = l_V$. 
  \[
  \begin{tikzcd}
    V & W \\
    && P & X \\
    && Y & Z
    \arrow["\sigma", from=1-1, to=1-2]
    \arrow["{l_V}"{description, pos=0.3}, dashed, from=1-1, to=2-3]
    \arrow["{l_W}"{description}, dashed, from=1-2, to=2-3]
    \arrow[bend left, from=1-2, to=2-4]
    \arrow[bend right, from=1-2, to=3-3]
    \arrow[from=2-3, to=2-4]
    \arrow[from=2-3, to=3-3]
    \arrow["f", from=2-4, to=3-4]
    \arrow["g"', from=3-3, to=3-4]
  \end{tikzcd}
  \]
\end{definition}

\begin{lemma}[\done{https://github.com/sinhp/HoTTLean/blob/TYPES2026/HoTTLean/ForMathlib/CategoryTheory/WeakPullback.lean\#L31}] \label{lem:coherent-weak-pullback}
  When a (strong) pullback exists,
  any weak pullback structure can be replaced with a
  coherent weak pullback structure.
\end{lemma}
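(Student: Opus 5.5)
The idea is to route every lift through the universal cone and lift only once. Fix a square with $g' : P \to X$, $f' : P \to Y$, $f : X \to Z$, $g : Y \to Z$, carrying a weak pullback structure $l_\bullet$, and choose a (strong) pullback $X \times_Z Y$ with projections $\pi_X, \pi_Y$. This is itself a cone over $f$ and $g$, so the given structure supplies a single map
\[ s := l_{X \times_Z Y} : X \times_Z Y \to P, \qquad s \gg g' = \pi_X, \quad s \gg f' = \pi_Y. \]
We then define the new structure for any cone $(W,x,y)$ by
\[ l'_W := \langle x, y \rangle \gg s, \]
where $\langle x, y\rangle : W \to X \times_Z Y$ is the map given by the universal property of the pullback.

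First we check that $l'_W$ is a lift:
\[ l'_W \gg g' = \langle x,y\rangle \gg \pi_X = x, \qquad l'_W \gg f' = \langle x,y\rangle \gg \pi_Y = y. \]

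Next we check coherence. Let $\sigma : V \to W$ be a morphism of cones, so the cone on $V$ is $(\sigma \gg x, \sigma \gg y)$. By uniqueness in the universal property of the pullback,
\[ \sigma \gg \langle x, y\rangle = \langle \sigma \gg x, \sigma \gg y\rangle. \]
Post-composing with $s$ gives $\sigma \gg l'_W = l'_V$, which is exactly the coherence condition.

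No step here is a real obstacle. The only point that needs care is being precise about what a cone morphism is: it should mean that the cone on $V$ is the restriction along $\sigma$ of the cone on $W$. Uniqueness of factorization through the strong pullback then does all the work.

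We could add that $l'$ agrees with $l$ on the universal cone only up to the choice of $s$. Nothing more is required, since the lemma asks only for some coherent structure.
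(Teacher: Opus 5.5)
Your proof is correct and matches the paper's. The paper likewise sets $l'_V := s_V \gg l_S$: the canonical map into the strong pullback $S$, followed by the single chosen lift of the universal cone. Coherence then follows from the uniqueness of maps into $S$.
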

\begin{proof}
  Consider the following diagram,
  where $S$ is a (strong) pullback
  and $W$ has the structure of a weak pullback.
  \[
  \begin{tikzcd}
    S & W & X \\
    & Y & Z
    \arrow[bend left, from=1-1, to=1-3]
    \arrow[from=1-1, to=2-2]
    \arrow[from=1-2, to=1-3]
    \arrow[from=1-2, to=2-2]
    \arrow["f", from=1-3, to=2-3]
    \arrow["g"', from=2-2, to=2-3]
  \end{tikzcd}  \]
  We define a new weak pullback structure $l'_\bullet$ for $W$.
  From the universal property of (strong) pullbacks,
  $S$ has a unique weak pullback structure,
  denoted $s_\bullet$,
  which is coherent.
  For any cone $V$,
  take $l'_V := \, s_V \gg l_S : V \to W$
  by composing the lift for $S$
  with the lift for $W$.
  \[
\begin{tikzcd}
	V & S & W & X \\
	&& Y & Z
	\arrow["{s_V}"{description}, dashed, from=1-1, to=1-2]
	\arrow[bend left, from=1-1, to=1-4]
	\arrow[from=1-1, to=2-3]
	\arrow["{l_S}"{description}, dashed, from=1-2, to=1-3]
	\arrow[bend left, from=1-2, to=1-4]
	\arrow[from=1-2, to=2-3]
	\arrow[from=1-3, to=1-4]
	\arrow[from=1-3, to=2-3]
	\arrow["f", from=1-4, to=2-4]
	\arrow["g"', from=2-3, to=2-4]
\end{tikzcd}\]
  Then $l'_\bullet$ is coherent since $s_\bullet$ is.
\end{proof}

\begin{definition}[$\Id$-types] \label{def:algebraic-id-types}
  An algebraic $\Id$-type structure on an algebraic universe
  $\tp : \Tm \to \Ty$ consists of the following
  \begin{enumerate}
  \item
    Consider the following pullback provided by context extension.
    \[\begin{tikzcd}
	\Tm \\
	& {\Tm . \tp} & \Tm \\
	& \Tm & \Ty
	\arrow[dashed, "\delta", from=1-1, to=2-2]
	\arrow["{\id_\Tm}"', bend left, from=1-1, to=2-3]
	\arrow["{\id_\Tm}"', bend right, from=1-1, to=3-2]
	\arrow[from=2-2, to=2-3]
	\arrow[from=2-2, to=3-2]
	\arrow["\lrcorner"{anchor=center, pos=0.125}, draw=none, from=2-2, to=3-3]
	\arrow["\tp", from=2-3, to=3-3]
	\arrow["\tp"', from=3-2, to=3-3]
    \end{tikzcd}\]
    Here $\Tm.\tp$ is the context $X : \Ty.\,x : X.\,y : X$.
    Formation of identity types and reflexivity are
    modeled by a commutative square 
    \[
    \begin{tikzcd}
      \Tm\ar[d,"\delta", swap]\ar[r,"\refl"] & \Tm\ar[d,"\tp"] \\
      \Tm . \tp \ar[r,"\Id"] & \Ty
    \end{tikzcd}
    \]
  \item
    Then consider the context extension for $\Id$,
    and substitution $\rho := \id_\Tm . \id_\Tm . $
    \[\begin{tikzcd}
	\Tm \\
	& {\Tm . \tp . \Id} & \Tm \\
	& {\Tm . \tp} & \Ty
	\arrow["\rho"{description}, dashed, from=1-1, to=2-2]
	\arrow["\refl", bend left, from=1-1, to=2-3]
	\arrow["\delta"', bend right, from=1-1, to=3-2]
	\arrow[from=2-2, to=2-3]
	\arrow["{d_\Id}", from=2-2, to=3-2]
	\arrow["\lrcorner"{anchor=center, pos=0.125}, draw=none, from=2-2, to=3-3]
	\arrow["\tp", from=2-3, to=3-3]
	\arrow["\Id", from=3-2, to=3-3]
    \end{tikzcd}\]
    This creates a triangle $\rho \gg i = \id_\Tm$,
    \[
    \begin{tikzcd}
    \Tm\ar[dr,"\delta"]\ar[ddr,"\id_\Tm",swap] \ar[r,"\rho"]& \Tm . \tp . \Id \ar[d,"d_\Id"] \ar[dd,bend left = 60,"i"]\\
    &\Tm . \tp \ar[d,"d_\tp"] \\
     & \Tm
    \end{tikzcd}
    \]
    which induces a natural transformation
    $v : P_i\to P_{\id_\Tm} \iso \Tm \times (-)$
    by \Cref{prop:vertical-nat-trans-computation}.
    The $j$-rule,
    or $\Id$-elimination,
    is captured by requiring the following naturality square to have a weak pullback structure.
    \[\begin{tikzcd}
	{P_i\Tm} & {P_{\id_\Tm}\Tm} & {\Tm \times \Tm} \\
	{P_i\Ty} & {P_{\id_\Tm}\Ty} & {\Tm \times \Ty}
	\arrow["{v_\Tm}", from=1-1, to=1-2]
	\arrow["{P_i\tp}"', from=1-1, to=2-1]
	\arrow["\iso"{description}, draw=none, from=1-2, to=1-3]
	\arrow["{P_{\id_\Tm}\tp}", from=1-2, to=2-2]
	\arrow["{\Tm \times \tp}", from=1-3, to=2-3]
	\arrow["{v_\Ty}"', from=2-1, to=2-2]
	\arrow["\iso"{description}, draw=none, from=2-2, to=2-3]
    \end{tikzcd}\]
  \item The weak pullback structure is coherent.
  \end{enumerate} 
\end{definition}

The final condition is somewhat auxiliary
in the sense that any weak pullback structure
can be corrected into a coherent weak pullback structure
whenever a (strong) pullback exists, by \Cref{lem:coherent-weak-pullback}.
Indeed, $\Tm \times \tp$ is a pullback of $\tp$
which has all chosen (strong) pullbacks.

Unlike in \cite{awodey2025},
our identity types are defined in a Paulin-Mohring style,
like in \cite{nawrocki2026}
for the following two reasons.
Firstly (and rather trivially),
$i$ is a smaller expression than $i \gg \tp$,
resulting in smaller context extension calculations.
Secondly, it is convenient that we are taking
the polynomial functor with polynomial signature $\id_\Tm$,
which produces a product functor $\Tm \times (-)$.

\section{Translation between elementary and algebraic semantics}\label{sec:Translation}
The main contribution of the work presented in our paper is the definition of an algebraic model in a $\pi$-clan,
showing that algebraic semantics of MLTT need not assume LCC structure. 
The translation given in this section ensures the correctness of our definition.
The proof stems from the observation that any interpretation of types
and terms (from an elementary model, natural model, or CwF)
automatically produces a pullback stable class of maps $R$.
Specifically, for a universe $\tp : \Tm \to \Ty$,
one can take $R$ to be the class of pullbacks of $\tp$.
That is, all type families classified by the universe.
We have the following correspondence between
conditions on $\tp$ and conditions on $R$.

\begin{equation}\label{eq:tp-vs-R}
\begin{tabular}{c|c}
   property of $\tp$ & property of $R$\\
   \hline
   substitution  & stable under pullback \\
   $\Unit$-type  & contains all isomorphisms \\
   $\Sigma$-type & closed under composition \\
   $\Pi$-type & closed under pushforward \\
\end{tabular}
\end{equation}

Therefore, a $\pi$-preclan\footnote{
  See \Cref{def:clan}.} 
$(\CC,\RR)$ naturally arises from a universe $\tp$ with elementary
$\Unit$-, $\Sigma$-, and $\Pi$-types.
Furthermore, the universe $\tp$ is an $\RR$-map.

\subsection{Constructing an algebraic model from an elementary one}\label{sec:UnstrToStr}

For the sake of generality, we will phrase \Cref{prop:elementary-unit-to-algebraic-unit},
\Cref{prop:elementary-sigma-to-algebraic-sigma},
\Cref{prop:elementary-pi-to-algebraic-pi},
and \Cref{prop:elementary-id-to-algebraic-id}
with respect to an algebraic universe $\tp$,
i.e., with respect to a fixed $\pi$-clan.
The more delicate conditions for converting a universe into an algebraic one are stated in \Cref{cor:elementary-universe-hierarchy-to-algebraic}.

\begin{proposition} \label{prop:elementary-unit-to-algebraic-unit}
  If $\tp$ is an algebraic universe,
  elementary $\Unit$-types on $\tp$
  give rise to algebraic $\Unit$-types on $\tp$.
\end{proposition}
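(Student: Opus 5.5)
We take $\Gamma := 1$ in \Cref{def:elementary-unit}: the algebraic type former is the elementary unit type over the empty context, $\Unit := \Unit_1 : 1 \to \Ty$, and its canonical term is $\unit := \unit_1 : 1 \to \Tm$. The terminal object exists because $(\CC,\RR)$ is a $\pi$-clan. The square in the definition of algebraic $\Unit$-types commutes by condition (3) of \Cref{def:elementary-unit}, which says $\unit_1 \gg \tp = \Unit_1$. That already gives the stated structure.

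Since the algebraic type formers are meant to be pullback squares, we also check that this square is a pullback, which is the substantive part. Take a cone consisting of an object $W$, the unique map $!_W : W \to 1$, and a map $x : W \to \Tm$ with $x \gg \tp = !_W \gg \Unit_1$.

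\begin{enumerate}
\item Stability of $\Unit$ under substitution (condition (2)) gives $!_W \gg \Unit_1 = \Unit_W$. So $x$ is a term of type $\Unit_W$.
\item Uniqueness in condition (3) then forces $x = \unit_W$.
\item By \Cref{lem:elementary-unit-substitution-stable}, $\unit_W = !_W \gg \unit_1$.
\item Hence $!_W$ is a lift: it composes to $x$ along $\unit_1$ and to $!_W$ along the identity.
\item The lift is unique because any map into $1$ equals $!_W$.
\end{enumerate}

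This establishes the pullback property.

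The only point needing care is step (2): it relies on the uniqueness clause of elementary $\Unit$-types, applied in the context $W$ after rewriting the cone's type by substitution stability. Everything else is immediate from the universal property of the terminal object. The hypothesis that $\tp$ is algebraic is not used beyond fixing the setting.
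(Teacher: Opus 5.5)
Your proposal is correct and follows the paper's proof: take the elementary $\Unit$-type over the context $1$, and show the square is a pullback because the top map of any cone is forced by uniqueness to be $\unit_\Gamma = {!} \gg \unit_1$, so the unique map into $1$ is the lift. You spell out in more detail (substitution stability, the uniqueness clause, and \Cref{lem:elementary-unit-substitution-stable}) what the paper leaves implicit in its cone diagram.
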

\begin{proof}
  By taking the elementary $\Unit$-type
  on the context $1$,
  we construct a commutative square
  \[\begin{tikzcd}
    1 & \Tm \\
    1 & \Ty
    \arrow["\unit_1", from=1-1, to=1-2]
    \arrow[equals, from=1-1, to=2-1]
    \arrow["\tp", from=1-2, to=2-2]
    \arrow["\Unit_1"', from=2-1, to=2-2]
  \end{tikzcd}\]
  It is a pullback since a cone of the diagram
  must be of the form
  \[
  \begin{tikzcd}
    \Gamma & 1 & \Tm \\
    & 1 & \Ty
    \arrow["{!}", dashed, from=1-1, to=1-2]
    \arrow["{\unit_\Gamma}", bend left, from=1-1, to=1-3]
    \arrow["{!}"', from=1-1, to=2-2]
    \arrow["{\unit_1}", from=1-2, to=1-3]
    \arrow[equals, from=1-2, to=2-2]
    \arrow["\tp", from=1-3, to=2-3]
    \arrow["{\Unit_1}"', from=2-2, to=2-3]
  \end{tikzcd}
  \]
\end{proof}

\begin{proposition}[\done{https://github.com/sinhp/HoTTLean/blob/TYPES2026/HoTTLean/Model/Structured/StructuredUniverse.lean\#L635}] \label{prop:elementary-pi-to-algebraic-pi}
  If $\tp$ is an algebraic universe,
  elementary $\Pi$-types on $\tp$
  give rise to algebraic $\Pi$-types on $\tp$.
\end{proposition}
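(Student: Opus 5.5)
We construct $\Pi : P_\tp \Ty \to \Ty$ and $\lam : P_\tp \Tm \to \Tm$ by applying the elementary structure to the generic points supplied by the universal property of \Cref{prop:poly-up}. We then check the pullback condition against the bijection of \Cref{lem:substituted-lam}.

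First, note that for any $A : \Gamma \to \Ty$ the fibre product $A \times_\Ty \tp$ is, up to canonical isomorphism, the chosen context extension $\Gamma . A$. So by \Cref{prop:poly-up} a map $t : \Gamma \to P_\tp X$ is the same data as a pair $(A, B)$ with $A = \fst(t) : \Gamma \to \Ty$ and $B = \snd(t) : \Gamma . A \to X$.

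Applying this to the identity of $P_\tp \Ty$ gives a generic pair $\fstProj : P_\tp\Ty \to \Ty$ and $\sndProj : P_\tp\Ty . \fstProj \to \Ty$. We set $\Pi := \Pi_{\fstProj}\sndProj$. By stability of $\Pi_A B$ under substitution (condition (2) of \Cref{def:elementary-pi}), for any $t$ corresponding to $(A,B)$ we get $t \gg \Pi = \Pi_A B$. This uses that $t$ induces exactly $\tilde t : \Gamma . A \to P_\tp\Ty.\fstProj$ with $\tilde t \gg \sndProj = B$, which is the factorisation $\snd(t) = (t \times_B f) \gg \sndProj$ from \Cref{prop:poly-up}.

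Similarly, the identity of $P_\tp\Tm$ corresponds to a generic $A_0 : P_\tp\Tm \to \Ty$ and $b_0 : P_\tp\Tm . A_0 \to \Tm$. We set $\lam := \lam\, b_0$, which has type $\Pi_{A_0}(b_0 \gg \tp)$. For commutativity of the square, observe that $P_\tp \tp$ acts on the universal property by $(A,b) \mapsto (A, b \gg \tp)$ by naturality in $X$. Hence $P_\tp\tp \gg \Pi = \Pi_{A_0}(b_0 \gg \tp) = \lam \gg \tp$.

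For the pullback property, take a cone consisting of $t : \Gamma \to P_\tp \Ty$, corresponding to $(A,B)$, and $f : \Gamma \to \Tm$ with $f \gg \tp = t \gg \Pi = \Pi_A B$. Maps $s : \Gamma \to P_\tp\Tm$ with $s \gg P_\tp\tp = t$ correspond exactly to terms $b : \Gamma . A \to \Tm$ with $b \gg \tp = B$. By stability of $\lam$ (condition (4)), $s \gg \lam = \lam\, b$. So the lifts of the cone are exactly the $b$ with $\lam\, b = f$. By the $\beta$- and $\eta$-rules there is exactly one such $b$, namely $\unlam\, f$, which is \Cref{lem:substituted-lam} with $\sigma = \id$.

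The main obstacle is bookkeeping rather than ideas. The universal property of \Cref{prop:poly-up} uses the fibre product $\fst(t)\times_\Ty \tp$, while the elementary structure uses the chosen pullback $\Gamma.A$, and the two are only isomorphic. We must check that transporting along these canonical isomorphisms is compatible with the map $t \times_B f$ and with $\tilde t$. Only then does substitution stability of $\Pi$ and $\lam$ apply literally, which requires matching the two induced maps into the chosen pullback via its universal property. I would handle this by first proving a small lemma: under the identification $\fst(t) \times_\Ty \tp \iso \Gamma . \fst(t)$, the map $t\times_\Ty \tp$ becomes $\tilde t$. Everything else then follows by the computations above.
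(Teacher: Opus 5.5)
Your proposal is correct and is essentially the paper's argument. The paper defines $\Pi'$ and $\lam'$ through the Yoneda embedding as the natural transformations $(A,B)\mapsto \Pi_A B$ and $(A,b)\mapsto \lam\, b$ on generalized elements, with naturality coming from substitution stability; by Yoneda this is exactly your evaluation at the generic pair, and the paper checks the pullback via the unique lift $(A,\unlam f)$ using the $\beta$- and $\eta$-rules just as you do, while leaving implicit your identification of $\fst(t)\times_\Ty \tp$ with the chosen extension $\Gamma . A$ (so that $t\times_\Ty\tp$ becomes $\tilde t$).
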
 
\begin{proof}
  To define a map $\Pi' : P_\tp \Ty \to \Ty$,
  it suffices to use the Yoneda embedding
  and define a natural transformation
  $y(\Pi') : y(P_\tp \Ty) \to y(\Ty)$.
  Suppose we have a map
  $\alpha : \Gamma \to P_\tp \Ty$.
  By \Cref{prop:poly-up}, this can be viewed as a pair of maps
  $A : \Gamma \to \Ty$ and $B : \Gamma . A \to \Ty$.
  Let us denote $\alpha$ as $(A,B)$.
  Using the elementary $\Pi$-type on $\tp$,
  we define the component of $y(\Pi')$ on $\Gamma$ by
  \begin{align*}
    y(\Pi')_\Gamma : \CC(\Gamma,P_\tp \Ty) & \to \CC(\Gamma, \Ty) \\
    (A,B) & \mapsto \Pi_A B
  \end{align*}
  Naturality of $y(\Pi')$ then follows from stability of
  $\Pi_A B$ under substitution.
  
  We can define $\lam' : P_\tp \Tm \to \Tm$
  in exactly the same way.
  To show that $\lam' \gg \tp = P_\tp \tp \gg \Pi'$
  it suffices to show that the same holds after applying the Yoneda embedding
  $y(\lam') \gg y(\tp) = y(P_\tp \tp) \gg y(\Pi')$.
  Indeed, let $(A,b) : \Gamma \to \Tm$,
  which we have decomposed as a pair
  $A : \Gamma \to \Ty$ and $b : \Gamma . A \to \Tm$,
  and denote the latter's type as $B := b \gg \tp$.

  \begin{align*}
    & y(\lam' \gg \tp)_\Gamma (A,b) \\
    = \, & y(\tp)_\Gamma (y(\lam')_\Gamma (A,b)) \\
    = \, & y(\tp)_\Gamma (\lam \, b) \\
    = \, & \lam \, b \gg \tp \\
    = \, & \Pi_A B \\
    = \, & y(\Pi')_\Gamma (A,B) \\
    = \, & y(\Pi')_\Gamma (y(P_\tp \tp)_\Gamma (A,b)) \\
    = \, & y(P_\tp \tp \gg \Pi')_\Gamma (A,b) \\
  \end{align*}

  To show that the square is a pullback,
  let $(A,B) : \Gamma \to P_\tp \Ty$ and $f : \Gamma \to \Tm$
  form a cone of the diagram,
  then $(A,\unlam f) : \Gamma \to P_\tp \Tm$ is the
  unique lift into the pullback.
  \[
  \begin{tikzcd}
    \Gamma \\
    & {P_{\tp}{\Tm}} & \Tm \\
    & {P_{\tp}{\Ty}} & \Ty
    \arrow["{(A,\unlam f)}"{description}, dashed, from=1-1, to=2-2]
    \arrow["f", bend left, from=1-1, to=2-3]
    \arrow["{(A,B)}"', bend right, from=1-1, to=3-2]
    \arrow["{\lam'}"', from=2-2, to=2-3]
    \arrow["{{P_{\tp}{\tp}}}"', from=2-2, to=3-2]
    \arrow["\tp", from=2-3, to=3-3]
    \arrow["{\Pi'}"', from=3-2, to=3-3]
  \end{tikzcd}
  \]
  This is because by the $\beta$- and $\eta$-rules,
  $\unlam \, f$ is the unique map
  $b : \Gamma . A \to \Tm$ such that
  $\lam \, b = f$.
\end{proof}

\begin{proposition}[\done{https://github.com/sinhp/HoTTLean/blob/TYPES2026/HoTTLean/Model/Structured/StructuredUniverse.lean\#L924}] \label{prop:elementary-sigma-to-algebraic-sigma}
  If $\tp$ is an algebraic universe,
  elementary $\Sigma$-types on $\tp$
  give rise to algebraic $\Sigma$-types on $\tp$.
\end{proposition}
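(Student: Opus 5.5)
We follow the template of \Cref{prop:elementary-pi-to-algebraic-pi}, working through the Yoneda embedding and the universal property of \Cref{prop:poly-up}. First we define $\Sigma' : P_\tp \Ty \to \Ty$ by specifying a natural transformation $y(P_\tp\Ty) \to y(\Ty)$. A map $\Gamma \to P_\tp \Ty$ corresponds to a pair $(A,B)$ with $A : \Gamma \to \Ty$ and $B : \Gamma . A \to \Ty$. Here we identify $\fst(t)\times_\Ty \tp$ with the chosen context extension $\Gamma . A$ via the canonical isomorphism of pullbacks, and we make this identification systematically. We send $(A,B)$ to $\Sigma_A B$. Naturality in $\Gamma$ is exactly stability of $\Sigma_A B$ under substitution, because precomposing $(A,B)$ with $\sigma$ yields $(\sigma\gg A, \tilde\sigma \gg B)$ by the factorization in \Cref{prop:poly-up}.

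Next we describe generalized elements of $\compDom$. By \Cref{def:pcomp}, $\compDom$ is the pullback of $\tp$ along $\sndProj : \fstProj\times_\Ty \tp \to \Ty$, and $\fstProj \times_\Ty \tp \to P_\tp\Ty$ is a pullback of $\tp$. Hence a map $\Gamma \to \compDom$ amounts to the following data:
\begin{itemize}
\item a map $(A,B) : \Gamma \to P_\tp \Ty$;
\item a lift of it to $\fstProj\times_\Ty\tp$, which is a term $a : \Gamma \to \Tm$ with $a\gg\tp = A$ (equivalently, a section $\id_\Gamma . a$ of $\Gamma . A$);
\item a term $b : \Gamma \to \Tm$ whose type equals the composite $\Gamma \to \fstProj\times_\Ty\tp \to \Ty$ along $\sndProj$, namely $(\id_\Gamma . a) \gg B$.
\end{itemize}
Checking the last identification uses $\snd(t) = (t\times_\Ty \tp)\gg\sndProj$ from \Cref{prop:poly-up}. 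Altogether, elements of $\compDom$ at stage $\Gamma$ are quadruples $(A,B,a,b)$, naturally in $\Gamma$, and $\tp\pcomp\tp$ sends $(A,B,a,b)$ to $(A,B)$.

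With this description, $\pair' : \compDom \to \Tm$ is defined by $(A,B,a,b) \mapsto \pair(a,b)$. It is well typed by clause (3) and natural by stability of $\pair$ (clause (4)). Commutativity of the square, $\pair'\gg\tp = (\tp\pcomp\tp)\gg\Sigma'$, follows componentwise from $\pair(a,b)\gg\tp = \Sigma_A B$.

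For the pullback property, a cone consists of $(A,B) : \Gamma \to P_\tp\Ty$ and $s : \Gamma\to\Tm$ with $s \gg \tp = \Sigma_A B$. The lift is $(A,B,\fst\, s, \snd\, s)$. It is well typed by clause (5), and it recovers $s$ via the $\eta$-rule. Uniqueness follows from the $\beta$-rules: any lift $(A,B,a,b)$ with $\pair(a,b)=s$ must have $a = \fst\, s$ and $b = \snd\, s$. This is the $\sigma=\id$ case of \Cref{lem:substituted-pair}, and it could equally be phrased by quoting that lemma directly.

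The main obstacle is the middle step: establishing carefully, and naturally in $\Gamma$, the description of $\CC(\Gamma,\compDom)$ as quadruples. In particular we must check that the type constraint on $b$ really becomes $(\id_\Gamma . a)\gg B$ under the identification of $\fst\times_\Ty\tp$ with the chosen pullback $\Gamma.A$, and that this identification is compatible with substitution. Everything else is routine bookkeeping transported along Yoneda.
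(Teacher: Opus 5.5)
Your proposal is correct and matches the paper's proof essentially step for step. Both define $\Sigma'$ and $\pair'$ through the Yoneda embedding, using the decomposition of maps $\Gamma \to \compDom$ into quadruples $(A,B,a,b)$ from \Cref{prop:poly-up}, check commutativity componentwise, and obtain the pullback property from the lift $(A,B,\fst\, s,\snd\, s)$ together with the $\beta$- and $\eta$-rules; the identification of $\fst(t)\times_\Ty\tp$ with the chosen extension $\Gamma . A$ that you flag as the main obstacle is left implicit in the paper, so your account is if anything slightly more careful.
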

\begin{proof}
  We can define $\Sigma' : P_\tp \Ty \to \Ty$
  in the same way as $\Pi'$ was defined in
  \Cref{prop:elementary-pi-to-algebraic-pi}.
  
  To define a map $\pair' : \compDom \to \Tm$,
  it suffices to use the Yoneda embedding
  and define a natural transformation
  $y(\pair') : y(\compDom) \to y(\Tm)$.
  Suppose we have a map
  $\alpha : \Gamma \to \compDom$.
  By \Cref{prop:poly-up},
  we can decompose $\alpha$ as the following maps
  \[
  \begin{tikzcd}
	& \Gamma \\
	\Tm & \compDom \\
	\Ty & {\fstProj \times_\Ty \tp} & \Tm \\
	& {P_\tp \Ty} & \Ty
	\arrow["b"', dashed, from=1-2, to=2-1]
	\arrow["\alpha", from=1-2, to=2-2]
	\arrow["a"{description}, bend left, dashed, from=1-2, to=3-3]
	\arrow["{(A,B)}"{description}, bend left = 60, dashed, from=1-2, to=4-2]
	\arrow["\tp"', from=2-1, to=3-1]
	\arrow[from=2-2, to=2-1]
	\arrow["\lrcorner"{anchor=center, pos=0.125, rotate=-90}, draw=none, from=2-2, to=3-1]
	\arrow[from=2-2, to=3-2]
	\arrow["{{\sndProj}}", from=3-2, to=3-1]
	\arrow[from=3-2, to=3-3]
	\arrow[from=3-2, to=4-2]
	\arrow["\lrcorner"{anchor=center, pos=0.125}, draw=none, from=3-2, to=4-3]
	\arrow["\tp", from=3-3, to=4-3]
	\arrow["{{\fstProj}}"', from=4-2, to=4-3]
\end{tikzcd}\]
  We have that $a \gg \tp = A$ and $b \gg \tp = \id_\Gamma . a \gg B$.
  Then we define $y(\pair')_\Gamma : \alpha \mapsto \pair (a,b)$.
  Naturality of $y(\pair')$ follows from stability of $\pair$ under substitution.
  \[
  \begin{tikzcd}
      \compDom \ar[d,"\tp \pcomp \tp",swap]\ar[r,"\pair'"] & \Tm \ar[d,"\tp"]\\
      P_\tp\Ty \ar[r,swap,"\Sigma'"]& \Ty
  \end{tikzcd}
  \]
  To show that the square commutes,
  we check that for $\alpha = (A,B,a,b) : \Gamma \to \compDom$
  \begin{align*}
    & y(\pair' \gg \tp)_\Gamma (A,B,a,b) \\
    = \, & y(\tp)_\Gamma (y(\pair')_\Gamma (A,B,a,b)) \\
    = \, & y(\tp)_\Gamma (\pair (a,b)) \\
    = \, & \pair (a,b) \gg \tp \\
    = \, & \Sigma_A B \\
    = \, & y(\Sigma')_\Gamma (A,B) \\
    = \, & y(\Sigma')_\Gamma (y(\tp \pcomp \tp)_\Gamma (A,B,a,b)) \\
    = \, & y(\tp \pcomp \tp \gg \Sigma')_\Gamma (A,B,a,b) \\
  \end{align*}
  To show that the square is a pullback,
  suppose $(A,B) : \Gamma \to P_\tp \Ty$ and $p : \Gamma \to \Tm$ form a cone,
  then $(A,B, \fst \, p, \snd \, p) : \Gamma \to P_\tp \Tm$ is the
  unique lift into the pullback.
  \[
  \begin{tikzcd}
    \Gamma \\
    & \compDom & \Tm \\
    & {P_\tp \Ty} & \Ty
    \arrow["{(A,B,\fst\, p,\snd\, p)}"{description}, dashed, from=1-1, to=2-2]
    \arrow["p", bend left, from=1-1, to=2-3]
    \arrow["{(A,B)}"', bend right = 60, from=1-1, to=3-2]
    \arrow["{\pair'}", from=2-2, to=2-3]
    \arrow["{\tp \pcomp \tp}"', from=2-2, to=3-2]
    \arrow["\tp", from=2-3, to=3-3]
    \arrow["{\Sigma'}"', from=3-2, to=3-3]
  \end{tikzcd}
  \]
  If $(A,B,a,b) : \Gamma \to \compDom$,
  then $(A,B,a,b) \gg \lam' = p$
  if and only if $\pair(a,b) = p$,
  if and only if $a = \fst \, p$ and $b = \snd \, p$,
  by the $\beta$ and $\eta$-rules.
\end{proof}

\begin{proposition} \label{prop:elementary-id-to-algebraic-id}
  If $\tp$ is an algebraic universe,
  elementary $\Id$-types on $\tp$
  give rise to algebraic $\Id$-types on $\tp$.
  The weak pullback structure constructed this way
  is automatically {coherent} when the elementary elimination $j$
  is stable under substitution.
\end{proposition}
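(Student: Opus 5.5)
The plan is to follow the pattern of \Cref{prop:elementary-pi-to-algebraic-pi}. First we construct $\Id'$ and $\refl'$ via the Yoneda embedding. Then we read off the cones of the naturality square in \Cref{def:algebraic-id-types} using \Cref{prop:poly-up} and \Cref{prop:vertical-nat-trans-computation}, and provide lifts using $j$.

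\textbf{Step 1: formation and reflexivity.} A map $\Gamma \to \Tm . \tp$ is the same as a triple $(A, a_0, a_1)$ with $A : \Gamma \to \Ty$ and $a_i \gg \tp = A$. This follows by composing the two pullback squares defining $\Tm.\tp$. We define $y(\Id')_\Gamma(A,a_0,a_1) := \Id_A(a_0,a_1)$ and $y(\refl')_\Gamma(a) := \refl\, a$. Naturality is exactly the stability of $\Id$ and $\refl$ under substitution. Precomposing with $\delta$ sends $a$ to $(a \gg \tp, a, a)$, so the square $\refl' \gg \tp = \delta \gg \Id'$ commutes componentwise by clause (3) of the elementary definition.

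\textbf{Step 2: identify the cones.} By \Cref{prop:poly-up}, a map $\Gamma \to P_i \Tm$ is a pair $(a, c)$ with $a : \Gamma \to \Tm$ and $c : a \times_\Tm i \to \Tm$. Similarly, a map $\Gamma \to P_i \Ty$ is a pair $(a, C)$ with $C : a \times_\Tm i \to \Ty$.

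The key identification is that the pullback $a \times_\Tm i$ is canonically isomorphic, over $\Gamma$, to the context $\Gamma . (x:A) . \Id_A(a,x)$, where $A = a \gg \tp$. To see this, write $i = d_\Id \gg d_\tp$ and paste pullbacks:
\begin{itemize}
\item pulling $d_\tp$ back along $a$ gives $\Gamma . A$;
\item pulling $d_\Id$ back further gives the extension by $\Id'$ restricted along $\id_\Gamma \gg a$ and the variable, which equals $\Id_A(a,x)$ by stability of $\Id$.
\end{itemize}
We will transport $c$ and $C$ along this canonical isomorphism throughout.

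Under this identification, \Cref{prop:vertical-nat-trans-computation} gives
\[ v(a,c) = (a,\; (a \times_\Tm \rho) \gg c). \]
The next check is that $a \times_\Tm \rho$ corresponds to $\rho_a = \id_\Gamma . a . \refl\, a$. This holds by comparing components into the iterated pullback, using that $\refl'$ restricted along $a$ is $\refl\, a$.

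Consequently, a cone over the square amounts to the following data:
\begin{itemize}
\item $a : \Gamma \to \Tm$;
\item a motive $C : \Gamma.(x:A).\Id_A(a,x) \to \Ty$;
\item a term $c_\refl : \Gamma \to \Tm$ with $c_\refl \gg \tp = \rho_a \gg C$.
\end{itemize}
Here the $\Tm \times (-)$ components of the two legs are forced to agree, both being $a$.

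\textbf{Step 3: lifts and coherence.} We take the lift to be $l_\Gamma := (a, j(C, c_\refl)) : \Gamma \to P_i\Tm$.
\begin{itemize}
\item Applying $P_i \tp$ gives $(a, j \gg \tp) = (a, C)$.
\item Applying $v_\Tm$ gives $(a, \rho_a \gg j) = (a, c_\refl)$.
\end{itemize}
Both equalities hold by clause (5). For coherence, let $\sigma : \Delta \to \Gamma$ be a cone morphism. By naturality of the bijection in \Cref{prop:poly-up},
\[ \sigma \gg l_\Gamma = (\sigma \gg a,\; \tilde\sigma \gg j(C,c_\refl)), \]
where $\tilde\sigma$ is the induced map on iterated context extensions. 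The restricted cone is $(\sigma \gg a,\ \tilde\sigma \gg C,\ \sigma \gg c_\refl)$, whose lift is $(\sigma\gg a,\ j(\tilde\sigma \gg C, \sigma \gg c_\refl))$. These two lifts agree precisely when $j$ is stable under substitution.

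\textbf{Main obstacle.} I expect the main difficulty to be Step 2: making the identification of $a \times_\Tm i$ with the chosen context extension $\Gamma.(x:A).\Id_A(a,x)$ precise. Pullbacks in the polynomial universal property are only determined up to isomorphism, whereas $j$ is specified on chosen context extensions. One must therefore check that the canonical comparison isomorphisms are compatible with $\rho$ versus $\rho_a$, and with substitution $\tilde\sigma$. I would handle this uniformly by working in $y$-components: there every map into an iterated pullback is determined by its projections, so each equation reduces to checking equalities of projections.
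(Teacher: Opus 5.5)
Your proposal is correct and follows essentially the same route as the paper. Like the paper, you define $\Id'$ and $\refl'$ componentwise via the Yoneda embedding, with naturality coming from stability. You then use \Cref{prop:poly-up} and \Cref{prop:vertical-nat-trans-computation} to identify a cone with a motive $C$ on $\Gamma.(x:A).\Id_A(a,x)$ together with a term $c_\refl$ satisfying $c_\refl \gg \tp = \rho_a \gg C$, take the lift $(a, j(C, c_\refl))$, and reduce coherence to stability of $j$ under substitution. Your explicit handling of the comparison isomorphism between $a \times_\Tm i$ and the chosen context extension, including its compatibility with $\rho$ versus $\rho_a$ and with substitution, is something the paper glosses by simply identifying the two objects; it adds rigor without changing the argument.
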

\begin{proof}
\begin{enumerate}
  \item
    To define a map $\Id' : \Tm . \tp \to \Ty$,
    it suffices to use the Yoneda embedding
    and define a natural transformation
    $y(\Id') : y(\Tm . \tp) \to y(\Ty)$.
    Suppose we have a map
    $\alpha : \Gamma \to \Tm . \tp$.
    This can be viewed as a pair of maps
    $a_0, a_1 : \Gamma \to \Tm$
    such that $a_0 \gg \tp = a_1 \gg \tp$.
    Let us use $A := a_0 \gg \tp$ to denote this composition.
    Using the elementary $\Id$-type on $\tp$,
    we define $y(\Id')_\Gamma : \alpha \mapsto \Id_A (a_0,a_1)$.
    Naturality of $y(\Id')$ then follows from stability of
    $\Id_A(a_0,a_1)$ under substitution.
    
    Next, we define $\refl' : \Tm \to \Tm$ in the same manner,
    by making a natural transformation $y(\refl') : y(\Tm) \to y(\Tm)$.
    Suppose we have a map $a : \Gamma \to \Tm$.
    Using the unstrucutred $\Id$-type on $\tp$,
    we define $y(\refl')_\Gamma : a \mapsto \refl \, a$.
    Then naturality of $y(\refl')$ follows from stability of $\refl a$
    under substitution.
    
    To show that $\refl' \gg \tp = \delta \gg \Id'$,
    it suffices to show that this is
    true after applying the Yoneda embedding
    \[
    \begin{tikzcd}
      \Tm\ar[d,"\delta", swap]\ar[r,"\refl'"] & \Tm\ar[d,"\tp"] \\
      \Tm . \tp \ar[r,"\Id'"] & \Ty
    \end{tikzcd}
    \quad \rightsquigarrow
    \begin{tikzcd}
      y(\Tm)\ar[d,"y(\delta)", swap]\ar[r,"y(\refl')"] & y(\Tm)
      \ar[d,"y(\tp)"] \\
      y(\Tm . \tp) \ar[r,"y(\Id')"] & y(\Ty)
    \end{tikzcd}
    \]
    Indeed, suppose $a : \Gamma \to \Tm$.
    Denote $A := a \gg \tp$.
    Then 
    \[ 
      y(\tp)_\Gamma (y(\refl')_\Gamma (a))
      = y(\tp)_\Gamma (\refl \, a)
      = \refl \, a \gg \tp
      = \Id_A (a,a)
      = y(\Id')_\Gamma (a \gg \delta)
      = y(\Id')_\Gamma (y(\delta)_\Gamma (a))
    \]
  \item
    Suppose we have a cone
    \[
  \begin{tikzcd}
	\Gamma \\
	& {P_i\Tm} & {\Tm \times \Tm} \\
	& {P_i\Ty} & {\Tm \times \Ty}
	\arrow["{(a,c_\refl)}", from=1-1, to=2-3]
	\arrow["{(a,C)}"', from=1-1, to=3-2]
	\arrow["{{v_\Tm}}", from=2-2, to=2-3]
	\arrow["{{P_i\tp}}", from=2-2, to=3-2]
	\arrow["{{\Tm \times \tp}}", from=2-3, to=3-3]
	\arrow["{{v_\Ty}}"', from=3-2, to=3-3]
  \end{tikzcd}
  \]
  By the universal property of polynomial functors \Cref{prop:poly-up}
  and the computation in
  \Cref{prop:vertical-nat-trans-computation},
  we can decompose $(a,c_\refl)$ and $(a,C)$ as three maps
  $a : \Gamma \to \Tm$ and $c_\refl : \Gamma \to \Tm$
  and $C : a \times_\Tm i \to \Ty$ such that
  \[ c_\refl \gg \tp = (a \times_\Tm \rho) \gg C\]
  We want to construct a (coherent) lift $(a,c) : \Gamma \to P_i \Tm$,
  which, again by \Cref{prop:poly-up} and
  \Cref{prop:vertical-nat-trans-computation}
  we can do by combining two maps
  $a : \Gamma \to \Tm$ and $c : a \times_\Tm i \to \Tm$ satisfying
  \[ c \gg \tp = C
  \quad \text{ and } \quad
  a \times_\Tm \rho \gg c = c_\refl \]
  Let us rephrase the problem by
  replacing our notation for pullbacks with context extensions.
  The pullback $a \times_\Tm i$ is $\Gamma . (x : A) . \Id_A (a,x)$
  and $a \times_\Tm \rho$ is the substitution $\rho_a$
  in the following diagram
    \[\begin{tikzcd}
	\Gamma & \Tm \\
	{\Gamma . (x : A) . \Id_A(a,x)} & {\Tm . \tp . \Id} \\
	{\Gamma.(x : A)} & {\Tm . \tp} \\
	\Gamma & \Tm
	\arrow["a", from=1-1, to=1-2]
	\arrow["{\rho_a}"', from=1-1, to=2-1]
	\arrow["\lrcorner"{anchor=center, pos=0.125}, draw=none, from=1-1, to=2-2]
	\arrow["\rho", from=1-2, to=2-2]
	\arrow[from=2-1, to=2-2]
	\arrow["{d_{\Id_A(a,x)}}"', from=2-1, to=3-1]
	\arrow["\lrcorner"{anchor=center, pos=0.125}, draw=none, from=2-1, to=3-2]
	\arrow["{d_\Id}", from=2-2, to=3-2]
	\arrow[from=3-1, to=3-2]
	\arrow["{d_A}"', from=3-1, to=4-1]
	\arrow["\lrcorner"{anchor=center, pos=0.125}, draw=none, from=3-1, to=4-2]
	\arrow["{d_\tp}", from=3-2, to=4-2]
	\arrow["a"', from=4-1, to=4-2]
    \end{tikzcd}\]
    Thus $C : \Gamma .(x : A) . \Id_A(a,x) \to \Ty$
    is a motive and
    \[ c_\refl \gg \tp = \rho_a \gg C\]
    The elementary $\Id$-elimination gives us a term
    $j (C, c_\refl) : \Gamma . (x : A) . \Id_A(a,x) \to \Tm$
    satisfying $j \gg \tp = C$ and
    $\rho_a \gg j (C, c_\refl) = c_\refl$.
    Thus we simply conclude by taking $c := j(C, c_\refl)$.
    
    The lift is automatically coherent if $j$ is stable under
    substitution.
    As is the case in general,
    if it were not coherent we could replace it with a coherent
    lift using the (strong) pullback by \Cref{lem:coherent-weak-pullback}.
  \end{enumerate}
\end{proof}

So far,
we have only converted elementary type formers on
a universe to algebraic type formers.
The following theorem provides sufficient conditions
to convert an elementary \emph{universe} into an algebraic one. 




\begin{definition}
  \label{def:principal-class}
  Let $t : Y \to X$ be a morphism in $\CC$.
  We will call the class of all pullbacks of $f$
  \[ \RR_t := \{ \, f : E \to B \; | \; \text{ there exists \, }
  \begin{tikzcd}
    E & Y \\
    B & X
    \arrow[from=1-1, to=1-2]
    \arrow["f"', from=1-1, to=2-1]
    \arrow["\lrcorner"{anchor=center, pos=0.125}, draw=none, from=1-1, to=2-2]
    \arrow["t", from=1-2, to=2-2]
    \arrow[from=2-1, to=2-2]
  \end{tikzcd} \}\]
  the \emph{principal class} generated by $f$.
  We will switch out the word ``class'' for ``preclan''
  (or ``clan'', and so on) when appropriate.
\end{definition}

\begin{theorem}\label{thm:elementary-universe-to-algebraic-universe}
  Suppose $\tp : \Tm \to \Ty$ is a universe 
  with elementary $\Unit$-, $\Sigma$-, and $\Pi$-types.
  The principal class $\RR_\tp$ (\Cref{def:principal-class}) generated by $\tp$
  forms a $\pi$-preclan $(\CC,\RR_\tp)$.
  Since $\tp$ is a pullback of itself along the identity $\id : \Ty \to \Ty$,
  $\tp$ is in $\RR_\tp$.
\end{theorem}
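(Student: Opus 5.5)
Our plan is to verify the four $\pi$-preclan axioms of \Cref{def:clan} for $\RR_\tp$ one at a time, following the dictionary in \Cref{eq:tp-vs-R}. Throughout we use that every $f \in \RR_\tp$ with classifying map $A : B \to \Ty$ is isomorphic over $B$ to the chosen context extension $d_A : B.A \to B$. We also use that $\RR_\tp$ is closed under pre-composition with isomorphisms, since a pullback square stays a pullback after composing with an isomorphism. Hence it suffices to check each axiom on chosen context extensions $d_A$.

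\emph{Axiom (1).} Let $f \in \RR_\tp$ be classified by $A$, and let $\sigma : \Delta \to B$. By pasting pullbacks, the chosen extension $d_{\sigma\gg A} : \Delta.(\sigma\gg A) \to \Delta$, together with $\tilde\sigma$, is a pullback of $d_A$ along $\sigma$, and it is in $\RR_\tp$ by construction.

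\emph{Axiom (2).} Let $\phi : E \to B$ be an isomorphism. We show that $d_{\Unit_B} : B.\Unit_B \to B$ is an isomorphism. Its section is $\id_B.\unit_B$. For the other composite, we check that $d_{\Unit_B} \gg (\id_B.\unit_B) = \id$ by comparing the two components into the pullback. The $\Tm$-component is $d_{\Unit_B}\gg\unit_B = \unit_{B.\Unit_B}$ by \Cref{lem:elementary-unit-substitution-stable}. This equals $\var_{\Unit_B}$ by uniqueness of $\unit$, because $\var_{\Unit_B} \gg \tp = d_{\Unit_B} \gg \Unit_B = \Unit_{B.\Unit_B}$. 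Thus $\phi$ is isomorphic over $B$ to $d_{\Unit_B}$ and so lies in $\RR_\tp$.

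\emph{Axiom (3).} For a composite $B.A.C \to B.A \to B$, we show that the map
\[B.A.C \to B.\Sigma_A C,\qquad \text{given by } d \gg d . \pair(\var,\var),\]
is an isomorphism over $B$. Its inverse is built from $\fst$ and $\snd$ applied to $\var_{\Sigma_A C}$. The fact that the two maps are mutually inverse comes from the $\beta$- and $\eta$-rules together with stability of $\pair$, $\fst$ and $\snd$. The cleanest route is \Cref{lem:substituted-pair}: it says that maps $\Delta\to B.\Sigma_A C$ over $\sigma$ correspond naturally to pairs $(a,b)$, that is, to maps $\Delta \to B.A.C$ over $\sigma$. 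By Yoneda this gives the isomorphism.

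\emph{Axiom (4).} This is the main step. For $d_A : B.A \to B$, we must construct $\partial (d_A)_*$ on $\RR_\tp(B.A)$. Every object there is isomorphic to some $d_C : B.A.C \to B.A$, and we set
\[\partial(d_A)_*(d_C) := d_{\Pi_A C} : B.\Pi_A C \to B.\]
The partial adjunction must give a bijection, natural in an arbitrary $g : \Gamma \to B$ (not required to be in $\RR_\tp$), between maps $\Gamma \to B.\Pi_A C$ over $B$ and maps $g^{*}(B.A) \to B.A.C$ over $B.A$. The first set is the set of terms $f$ with $f\gg\tp = g\gg\Pi_A C$. The second set, since $g^*(B.A)\cong \Gamma.(g\gg A)$ via $\tilde g$, is the set of terms $b$ with $b\gg \tp = \tilde g \gg C$. \Cref{lem:substituted-lam} gives exactly this bijection via $\lam/\unlam$. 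Naturality in $\Gamma$ follows from stability of $\lam$ (and of $\unlam$). Naturality in the $\RR_\tp$-object requires extending $\partial(d_A)_*$ to morphisms; by Yoneda this extension is forced by the bijection, and is then functorial automatically.

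We expect the main obstacle to be the bookkeeping with isomorphisms. Objects of $\RR_\tp(B.A)$ are only isomorphic to chosen extensions, and the pullback $g^*$ must be identified with chosen extensions. We will handle this by defining $\partial(d_A)_*$ on a chosen representative and transporting along these isomorphisms.

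Finally, $\tp \in \RR_\tp$ because $\tp$ is its own pullback along $\id_\Ty$.
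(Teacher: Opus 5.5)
Your proposal is correct and follows essentially the paper's proof. You use the same reduction to chosen context extensions, $\Unit_B$ for isomorphisms, $\Sigma_A C$ via \Cref{lem:substituted-pair} for composition, and $\Pi_A C$ via \Cref{lem:substituted-lam} for pushforward. The differences are only cosmetic: in axiom (3) you obtain the isomorphism by a Yoneda argument from the natural bijection, where the paper checks $p \gg p^{-1}$ and $p^{-1} \gg p$ componentwise; and in axiom (4) you spell out the functoriality of $\partial(d_A)_*$, which the paper leaves implicit.
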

\begin{proof}
  Every pullback $f : E \to B$ of $\tp$ is isomorphic to a context extension
  $d_A : \Gamma . A \to \Gamma$,
  where $\Gamma := B$ and $A : \Gamma \to \Ty$ is the given map in the pullback square
  $B \to \Ty$.
  Thus, without loss of generality we will assume such maps are context extensions
  whenever convenient.

  \begin{enumerate}
  \item
    The map $\tp$ has chosen pullbacks.
    Thus by pullback pasting, 
    pullbacks of all $\RR$-maps along all morphisms exist and are $\RR$-maps.
  \item
    Let $f : E \to B$ be an isomorphism.
    Then the elementary type former $\Unit_B : B \to \Ty$ classifies $f$.
    To show this,
    it suffices to show that $d_{\Unit_B} : B . \Unit_B \to B$ is an isomorphism,
    with the substitution $u := \id_B . \unit_B : B \to B . \Unit_B$
    as its inverse.
    \[
    \begin{tikzcd}
    B \\
    & {B . \Unit_B} & \Tm \\
    & B & \Ty
    \arrow["{u}"{description}, dashed, from=1-1, to=2-2]
    \arrow["{\unit_B}", bend left, from=1-1, to=2-3]
    \arrow["{\id_B}"', bend right, from=1-1, to=3-2]
    \arrow["{\var_{\Unit_B}}",from=2-2, to=2-3]
    \arrow["{d_{\Unit_B}}"{description}, from=2-2, to=3-2]
    \arrow["\lrcorner"{anchor=center, pos=0.125}, draw=none, from=2-2, to=3-3]
    \arrow["\tp", from=2-3, to=3-3]
    \arrow["{\Unit_B}"', from=3-2, to=3-3]
    \end{tikzcd}
    \]
    This immediately satisfies $u \gg d_{\Unit_B} = \id_B$.
    Conversely, by uniqueness of maps into the pullback,
    to show that $d_{\Unit_B} \gg u = \id_{B . \Unit_B}$
    it suffices to note that
    \[ d_{\Unit_B} \gg u \gg d_{\Unit_B} = d_{\Unit_B} \gg \id_B = \id_{B . \Unit_B} \gg d_{\Unit_B} \]
    and 
    \[ d_{\Unit_B} \gg u \gg \var_{\Unit_B} = \id_{B . \Unit_B} \gg \var_{\Unit_B} \]
    The second equation holding because both sides are the only term of type
    $\Unit_{B . \Unit_B}$.
    \[ d_{\Unit_B} \gg u \gg \var_{\Unit_B} \gg \tp = d_{\Unit_B} \gg \unit_B \gg \tp =
    d_{\Unit_B} \gg \Unit_{B . \Unit_B} = \Unit_{B . \Unit_B}\]
    and 
    \[ \id_{B . \Unit_B} \gg \var_{\Unit_B} \gg \tp = \var_{\Unit_B} \gg \tp = d_{\Unit_B} \gg \Unit_{B . \Unit_B} = \Unit_{B . \Unit_B}\]
    \item
      Suppose $d_B : \Gamma . A . B \to \Gamma . A$ and $d_A : \Gamma . A \to \Gamma$
      are two composable context extensions.
      We want to show that $d_B \gg d_A$ is also in $\RR$.
      It suffices to show that it is classified by the map $\Sigma_A B : \Gamma \to \Ty$,
      i.e. construct an isomorphism $p : \Gamma . A . B \to \Gamma . \Sigma_A B$
      such that $p \gg d_{\Sigma_A B} = d_B \gg d_A$.
      Consider the map $p$ given by
      \[
      \begin{tikzcd}
        {\Gamma . A . B} & {\Gamma . \Sigma_A B} & \Tm \\
        {\Gamma . A} & \Gamma & \Ty
        \arrow["p"', dashed, from=1-1, to=1-2]
        \arrow["{\pair(d_B \gg \var_A , \var_B)}", bend left, from=1-1, to=1-3]
        \arrow["{d_B}"', from=1-1, to=2-1]
        \arrow[from=1-2, to=1-3]
        \arrow["{d_{\Sigma_A B}}", from=1-2, to=2-2]
        \arrow["\lrcorner"{anchor=center, pos=0.125}, draw=none, from=1-2, to=2-3]
        \arrow["\tp", from=1-3, to=2-3]
        \arrow["{d_A}"', from=2-1, to=2-2]
        \arrow["{\Sigma_A B}"', from=2-2, to=2-3]
      \end{tikzcd}\]
      Using \Cref{lem:substituted-pair},
      to check the commutativity of the outer square,
      it suffices to check two simpler squares commute:
      \[
      \begin{tikzcd}[column sep = large]
        {\Gamma . A . B} & \Tm &&& {\Gamma . A . B} & \Tm \\
        \Gamma & \Ty &&& {\Gamma . A} & \Ty
        \arrow["{d_B \gg \var_A}", from=1-1, to=1-2]
        \arrow["{d_B \gg d_A}"', from=1-1, to=2-1]
        \arrow["\tp", from=1-2, to=2-2]
        \arrow["{\var_B}", from=1-5, to=1-6]
        \arrow["{{(d_B \gg d_A) . (d_B \gg \var_A) = d_B}}"', from=1-5, to=2-5]
        \arrow["\tp", from=1-6, to=2-6]
        \arrow["A"', from=2-1, to=2-2]
        \arrow["B"', from=2-5, to=2-6]
      \end{tikzcd}
      \]
      Next, our candidate for the inverse of $p$ is
      given by the following.
      By \Cref{lem:substituted-pair},
      the square
      \[\begin{tikzcd}
        {\Gamma . \Sigma_A B} & \Tm \\
        \Gamma & \Ty
        \arrow["{\var_{\Sigma_A B}}", from=1-1, to=1-2]
        \arrow["{d_{\Sigma_A B}}"', from=1-1, to=2-1]
        \arrow["\tp", from=1-2, to=2-2]
        \arrow["{\Sigma_A B}"', from=2-1, to=2-2]
      \end{tikzcd}\]
      produces two squares
      \[
      \begin{tikzcd}[column sep = large]
        {\Gamma . \Sigma_A B} & \Tm && {\Gamma . \Sigma_A B} & {\Gamma . A . B} & \Tm \\
        \Gamma & \Ty &&& {\Gamma . A} & \Ty
        \arrow["{\fst(\var_{\Sigma_A B})}", from=1-1, to=1-2]
        \arrow["{d_{\Sigma_A B}}"', from=1-1, to=2-1]
        \arrow["\tp", from=1-2, to=2-2]
        \arrow["p^{-1}", dashed, from=1-4, to=1-5]
        \arrow["{\snd(\var_{\Sigma_A B})}", bend left, from=1-4, to=1-6]
        \arrow["{d_{\Sigma_A B} . \fst(\var_{\Sigma_A B})}"', from=1-4, to=2-5]
        \arrow[from=1-5, to=1-6]
        \arrow["{d_B}", from=1-5, to=2-5]
        \arrow["\tp", from=1-6, to=2-6]
        \arrow["A"', from=2-1, to=2-2]
        \arrow["B"', from=2-5, to=2-6]
      \end{tikzcd}
      \]
      By the universal property of pullbacks,
      to show that $p \gg p^{-1} = \id_{\Gamma . A . B}$ it suffices to check
      the following three equations
      \begin{align*}
        & p \gg p^{-1} \gg d_B \gg d_A && p \gg p^{-1} \gg d_B \gg \var_A && p \gg p^{-1} \gg \var_B \\
        = & \, p \gg (d . \fst(\var)) \gg d_A &
        = & \, p \gg (d . \fst(\var)) \gg var_A &
        = & \, p \gg \snd(\var) \\
        = & \, p \gg d &
        = & \, p \gg \fst(\var) &
        = & \, \snd(p \gg \var) \\
        = & \, d_B \gg d_A &
        = & \, \fst(p \gg \var) &
        = & \, \snd(\pair(d_B \gg \var_A, \var_B)) \\
         & \,  &
        = & \, \fst(\pair(d_B \gg \var_A, \var_B)) &
        = & \, \var_B \\
         & \,  &
        = & \, d_B \gg \var_A &
         & \, 
      \end{align*}
      Similarly,
      to show that $p^{-1} \gg p = \id_{\Gamma . \Sigma_A B}$ it suffices to check the
      following two equations
      \begin{align*}
        & p^{-1} \gg p \gg d_{\Sigma_A B} && p^{-1} \gg p \gg \var_{\Sigma_A B} \\
        = & \, p^{-1} \gg d_B \gg d_A &
        = & \, p^{-1} \gg \pair(d_B \gg \var_A, \var_B) \\
        = & \, (d_{\Sigma_A B} . \fst(\var)) \gg d_A &
        = & \, \pair(p^{-1} \gg d_B \gg \var_A, p^{-1} \gg \var_B) \\
        = & \, d_{\Sigma_A B} &
        = & \, \pair((d . \fst(\var_{\Sigma_A B})) \gg \var_A, \snd(\var_{\Sigma_A B})) \\
         & \,  &
        = & \, \pair(\fst(\var_{\Sigma_A B}), \snd(\var_{\Sigma_A B})) \\
         & \,  &
        = & \, \var_{\Sigma_A B}
      \end{align*}
    \item
      Consider again two composable context extensions
      $d_B : \Gamma . A . B \to \Gamma . A$ and $d_A : \Gamma . A \to \Gamma$.
      It will suffice to show that $d_{\Pi_A B} : \Gamma . \Pi_A B \to \Gamma$
      is a pushforward ${d_A}_* d_B$,
      i.e. for any $\sigma : \Delta \to \Gamma$ there is a natural bijection 
      \[ l : {\CC / \Gamma . A} \, (d_A^* \sigma, d_B) \iso {\CC / \Gamma} \, (\sigma, d_{\Pi_A B}) \]
      As before, let us denote $\tilde \sigma := d_A^* \sigma$.
      Suppose $b : \tilde \sigma \to d_B$ in $\CC / \Gamma . A$.
      \[
      \begin{tikzcd}[column sep = large]
        & {\Gamma . A . B} \\
        {\Delta . (\sigma \gg A)} & {\Gamma . A} && {\Delta . (\sigma \gg A)} & \Tm \\
        \Delta & \Gamma && {\Gamma . A} & \Ty
        \arrow["{d_B}", from=1-2, to=2-2]
        \arrow["b", from=2-1, to=1-2]
        \arrow["{\tilde \sigma}"', from=2-1, to=2-2]
        \arrow["{d_{\sigma \gg A}}"', from=2-1, to=3-1]
        \arrow["{d_A}", from=2-2, to=3-2]
        \arrow["{b \gg \var_B}", from=2-4, to=2-5]
        \arrow["{\tilde \sigma}"', from=2-4, to=3-4]
        \arrow["\tp", from=2-5, to=3-5]
        \arrow["\sigma"', from=3-1, to=3-2]
        \arrow["B"', from=3-4, to=3-5]
      \end{tikzcd}
      \]
      By \Cref{lem:substituted-lam},
      this results in a term of the substituted $\Pi$-type,
      which gives us our desired map
      $l(b) : \sigma \to d_{\Pi_A B}$ in ${\CC / \Gamma}$.
      \[
      \begin{tikzcd}
        \Delta & {\Gamma. \Pi_A B} & \Tm \\
        & \Gamma & \Ty
        \arrow["{l(b)}"{description}, dashed, from=1-1, to=1-2]
        \arrow["{\lam (b \gg \var_B)}", bend left, from=1-1, to=1-3]
        \arrow["\sigma"', from=1-1, to=2-2]
        \arrow[from=1-2, to=1-3]
        \arrow["{d_{\Pi_A B}}", from=1-2, to=2-2]
        \arrow["\tp", from=1-3, to=2-3]
        \arrow["{\Pi_A B}"', from=2-2, to=2-3]
      \end{tikzcd}
      \]
      Conversely,
      given $f : \sigma \to d_{\Pi_A B}$,
      we similarly obtain
      \[
      \begin{tikzcd}
        {\Delta . (\sigma \gg A)} & {\Gamma . A . B} & \Tm \\
        & {\Gamma . A} & \Ty
        \arrow["{l^{-1}(b)}", dashed, from=1-1, to=1-2]
        \arrow["{\unlam (f \gg \var_{\Pi_A B})}", bend left, from=1-1, to=1-3]
        \arrow["{\tilde \sigma}"', from=1-1, to=2-2]
        \arrow[from=1-2, to=1-3]
        \arrow[from=1-2, to=2-2]
        \arrow["\tp", from=1-3, to=2-3]
        \arrow["B"', from=2-2, to=2-3]
      \end{tikzcd}\]
      It follows from the universal property of pullbacks
      and \Cref{lem:substituted-lam} that $l$ and
      $l^{-1}$ are inverses of each other.

      Furthermore $l$ is natural in $\sigma \in \CC / \Gamma$:
      for any $\tau : \Xi \to \Delta$ we have
      $l(\tilde \tau \gg b) = \tau \gg l(b)$.
      We check this using the universal property of pullbacks
      \[
      l(\tilde \tau \gg b) \gg d_{\Pi_A B}
      = \tau \gg \sigma
      = \tau \gg l(b) \gg d_{\Pi_A B}
      \]
      and
      
      \[
      l(\tilde \tau \gg b) \gg \var_{\Pi_A B}
      = \lam (\tilde \tau \gg b \gg \var_B)
      = \tau \gg \lam (b \gg \var_B)
      = \tau \gg l(b) \gg \var_{\Pi_A B}
      \]

  \end{enumerate}
\end{proof}
Each type former corresponds independently
to an axiom for $\pi$-preclans (see \Cref{eq:tp-vs-R}),
and we could have stated this theorem more generally
to emphasize those correspondences.
The failure of this $\pi$-preclan to be a full $\pi$-clan
comes down to a \textsf{Type : Type} issue that we would like to circumvent,
which would arise if $\Ty \to 1$ were a pullback of $\tp$.
The following theorem resolves this by
requiring an infinite hierarchy of universes.

\begin{corollary}\label{cor:elementary-universe-hierarchy-to-algebraic}
  Suppose $\CC$ has a terminal object
  and we have a sequence of (non-algebraic) universe lifts
  indexed by the naturals
  \[
  \begin{tikzcd}
    {\tp_0} & {\tp_1} & {\tp_2} & \cdots
    \arrow["{l_0^1}", from=1-1, to=1-2]
    \arrow["{l_1^2}", from=1-2, to=1-3]
    \arrow[from=1-3, to=1-4]
  \end{tikzcd}
  \]
  Additionally,
  suppose each $\tp_n$ has
  elementary $\Unit$-, $\Sigma$-, and $\Pi$-types.
  We use $(\CC,\RR_n)$ to denote the principal $\pi$-preclan
  generated by $\tp_n$
  (using \Cref{thm:elementary-universe-to-algebraic-universe}).
  Then the union of them all
  \[ \RR := \bigcup_{n \in \N} \RR_n \]
  forms a $\pi$-clan $(\RR(1), \RR)$,
  when we restrict to the subcategory of $\RR$-objects.
  Each universe $\tp_n$
  is $(\RR(1), \RR)$-algebraic.
  Furthermore, each algebraic universe $\tp_n$ has
  {algebraic} $\Unit$-, $\Sigma$-, and $\Pi$-types.
\end{corollary}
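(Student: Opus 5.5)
The plan is to first show that the classes $\RR_n$ form an increasing chain, $\RR_n \subseteq \RR_{n+1}$. Then I will check the $\pi$-clan axioms of \Cref{def:clan} for the union $\RR$ by reducing each axiom to a single $\RR_n$ and invoking \Cref{thm:elementary-universe-to-algebraic-universe}.

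The chain property comes from the morphism of universes $l_n^{n+1} : \tp_n \to \tp_{n+1}$. Its defining square is a pullback, so by pullback pasting any pullback of $\tp_n$ along $A : B \to \Ty_n$ is also a pullback of $\tp_{n+1}$ along $A \gg l_\Ty$. With the chain in hand, the axioms follow one at a time.
\begin{enumerate}
  \item Pullback stability holds because each $\RR_n$ is pullback stable.
  \item Isomorphisms already lie in $\RR_0$.
  \item For composition, given composable $f \in \RR_n$ and $g \in \RR_m$, both lie in $\RR_{\max(n,m)}$, which is closed under composition.
  \item For pushforward, given $\RR$-maps $g : Z \to Y$ and $f : Y \to X$, I again move both into a common $\RR_k$. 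The pushforward constructed in \Cref{thm:elementary-universe-to-algebraic-universe} is a pushforward in $\CC$ against all $\sigma \in \CC/X$, not merely against $\RR_k$-maps. Hence it serves as the partial right adjoint for $\RR$ as well, and it lies in $\RR_k \subseteq \RR$.
\end{enumerate}

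Next I restrict to $\RR(1)$ and must check that everything stays inside it. The subcategory $\RR(1)$ is full, and it contains the domains of $\RR$-maps whose codomain is in $\RR(1)$, by composition. Pullbacks of $\RR$-maps along maps in $\RR(1)$ land in $\RR(1)$, and so do pushforwards. The terminal object is in $\RR(1)$ because $\id_1$ is an isomorphism. Axiom (5) then holds by construction.

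The key remaining point is that each $\Ty_n$ and $\Tm_n$ is an $\RR$-object. This is where the universe lift is used. The second pullback square exhibits $\Ty_n \to 1$ as the pullback of $\tp_{n+1}$ along $U_n$, so $\Ty_n \to 1$ lies in $\RR_{n+1}$. Then $\Tm_n \to 1$ is the composite $\tp_n \gg (\Ty_n \to 1)$, which lies in $\RR$. Consequently $\tp_n$ is an $\RR$-map between objects of $\RR(1)$. Moreover, the chosen pullbacks $\Gamma . A$ for $\Gamma \in \RR(1)$ stay in $\RR(1)$, so $\tp_n$ is a universe in $\RR(1)$ and is therefore $(\RR(1),\RR)$-algebraic.

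Finally, the elementary $\Unit$-, $\Sigma$- and $\Pi$-structures on $\tp_n$ restrict to the full subcategory $\RR(1)$, since all the contexts involved stay inside it. \Cref{prop:elementary-unit-to-algebraic-unit}, \Cref{prop:elementary-sigma-to-algebraic-sigma} and \Cref{prop:elementary-pi-to-algebraic-pi} then give the algebraic structures. The main obstacle I expect is the pushforward axiom (4): I must make sure the partial adjunction in \Cref{def:clan} is tested against all of $\CC/X$ restricted to $\RR(1)/X$, and that the bijection from \Cref{thm:elementary-universe-to-algebraic-universe}, which is stated only for context extensions, transfers along the isomorphisms identifying arbitrary $\RR_k$-maps with context extensions. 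I would handle this by noting that partial right adjoints are invariant under isomorphism of the objects $d_B$ and $d_A$ in the slice.
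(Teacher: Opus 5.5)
Your proposal is correct and follows essentially the same route as the paper. You nest $\RR_n \subseteq \RR_{n+1}$ via the lift's pullback square, check the $\pi$-preclan axioms in a common $\RR_{\max(m,n)}$, and use the $\mathsf{asTm}$ pullback plus closure under composition to make $\Ty_n$ and $\Tm_n$ into $\RR$-objects before invoking \Cref{prop:elementary-unit-to-algebraic-unit}, \Cref{prop:elementary-sigma-to-algebraic-sigma} and \Cref{prop:elementary-pi-to-algebraic-pi}. Your extra remarks spell out steps the paper dismisses as ``analogous'' or ``in general'': that the pushforward from \Cref{thm:elementary-universe-to-algebraic-universe} represents against all of $\CC/X$ and so serves for the union, and that $\RR(1)$ is closed under pullbacks, pushforwards and context extensions.
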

\begin{proof}
  Since universe lifts $l_n^{n+1}$ provide pullback squares
  for lifting types and terms,
  \[\begin{tikzcd}
    {\Tm_n} & {\Tm_{n+1}} \\
    {\Ty_n} & {\Ty_{n+1}}
    \arrow[from=1-1, to=1-2]
    \arrow["{{\tp_n}}"', from=1-1, to=2-1]
    \arrow["\lrcorner"{anchor=center, pos=0.125}, draw=none, from=1-1, to=2-2]
    \arrow["{{\tp_{n+1}}}", from=1-2, to=2-2]
    \arrow[from=2-1, to=2-2]
  \end{tikzcd}\]
  we have that the preclans are nested
  \[
    \RR_0 \subseteq \RR_1 \subseteq \cdots \subseteq \RR
  \]

  In general, restricting to the subcategory of $\RR$-objects
  converts a preclan $(\CC, \RR)$ into a clan $(\RR(1),\RR)$.
  We check that $(\CC, \RR)$ is an $\pi$-preclan:
  \begin{enumerate}
    \item
      Pullbacks of $\RR$-maps exist and are in $\RR$ since
      this holds for each $\RR_n$.
    \item
      All isomorphisms are in $\RR_0 \subseteq \RR$.
    \item
      If $g : Z \to Y$ and $f : Y \to X$ are in $\RR$,
      then there will be two naturals such that $g \in \RR_n$
      and $f \in \RR_m$
      and so $g$ and $f$ are both in $\RR_{\max(m,n)}$.
      We conclude by noting $\RR_{\max(m,n)} \subseteq \RR$ is closed under composition.
    \item Analogous to (3).
  \end{enumerate}
  
  We conclude the proof by applying 
  \Cref{prop:elementary-unit-to-algebraic-unit},
  \Cref{prop:elementary-sigma-to-algebraic-sigma}
  and \Cref{prop:elementary-pi-to-algebraic-pi}.
  Note that the theory of algebraic $\Sigma$ and $\Pi$-types
  requires applying polynomial functors to $\tp_n : \Tm_n \to \Ty_n$,
  so one must check that $\Tm_n$ and $\Ty_n$ are $\RR$-objects
  in order to apply \Cref{prop:elementary-sigma-to-algebraic-sigma}
  and \Cref{prop:elementary-pi-to-algebraic-pi}.
  Indeed, the universe lift $l_n^{n+1}$ provides a classifier for $\Ty_n$.
  \[
    \begin{tikzcd}
    {\Ty_n} & {\Tm_{n+1}} \\
    1 & {\Ty_{n+1}}
    \arrow["{{\mathsf{asTm}}}", from=1-1, to=1-2]
    \arrow[from=1-1, to=2-1]
    \arrow["\lrcorner"{anchor=center, pos=0.125}, draw=none, from=1-1, to=2-2]
    \arrow["{\tp_{n+1}}", from=1-2, to=2-2]
    \arrow["{{{U_n}}}"', from=2-1, to=2-2]
  \end{tikzcd}
  \]
  Hence $\Ty_n$ is an $\RR_{n+1}$-object.
  Since $\tp_n$ is an $\RR_{n+1}$-map and $\RR_{n+1}$
  is closed under composition,
  we also have $\Tm_n$ is an $\RR_{n+1}$-object.
  One can similarly check these conditions for \Cref{prop:elementary-id-to-algebraic-id}.
\end{proof}

\subsection{Constructing an elementary model from an algebraic one}

Since the proofs of the following theorems are very similar to their converses
in the previous section, we only proving a sketch.

\begin{proposition} \label{prop:algebraic-unit-to-elementary-unit}
  If $\tp$ is an algebraic universe,
  algebraic $\Unit$-types on $\tp$
  give rise to elementary $\Unit$-types on $\tp$.
\end{proposition}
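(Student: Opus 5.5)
We define the elementary structure by precomposing the algebraic data with the unique map to the terminal object. Given the algebraic $\Unit$-type structure, a pullback square with $\unit : 1 \to \Tm$ on top, $\Unit : 1 \to \Ty$ on the bottom, the identity on the left and $\tp$ on the right, we set, for each context $\Gamma$ with unique map $!_\Gamma : \Gamma \to 1$,
\[
  \Unit_\Gamma := \, !_\Gamma \gg \Unit : \Gamma \to \Ty
  \quad\text{and}\quad
  \unit_\Gamma := \, !_\Gamma \gg \unit : \Gamma \to \Tm .
\]
(The paper's notation names the map into $\Tm$ as $\Unit$ and the map into $\Ty$ as $\unit$; we read it with the roles matching \Cref{prop:elementary-unit-to-algebraic-unit}, so that $\unit$ lands in $\Tm$.) We then verify the three conditions of \Cref{def:elementary-unit} in order.

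Condition (1) holds by construction. Condition (2), stability under substitution, follows from uniqueness of maps into the terminal object: for $\sigma : \Delta \to \Gamma$ we have $\sigma \gg !_\Gamma = !_\Delta$, hence $\sigma \gg \Unit_\Gamma = \Unit_\Delta$. For condition (3), the typing equation $\unit_\Gamma \gg \tp = \Unit_\Gamma$ follows from commutativity of the square after precomposing with $!_\Gamma$.

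The only substantive step is the uniqueness part of (3). Suppose $u : \Gamma \to \Tm$ satisfies $u \gg \tp = \Unit_\Gamma = \, !_\Gamma \gg \Unit$. Then $(\Gamma, u, !_\Gamma)$ is a cone over the cospan $\tp$, $\Unit$. The square is a pullback whose apex is $1$, so the cone factors uniquely through $1$; the only map $\Gamma \to 1$ is $!_\Gamma$, so $u = \, !_\Gamma \gg \unit = \unit_\Gamma$. This is essentially the converse of the argument in \Cref{prop:elementary-unit-to-algebraic-unit}.

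There is no real obstacle. The only care needed is to note that stability of $\unit$ is not part of the definition, since it is automatic from \Cref{lem:elementary-unit-substitution-stable} (and also holds directly by the same terminal-object argument). In addition, $\tp$ being algebraic is not used; only the terminal object and the pullback property matter.
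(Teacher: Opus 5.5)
Your proposal is correct and takes the same approach as the paper, which defines the elementary structure by precomposing the algebraic $\Unit$ and $\unit$ with the unique map $\Gamma \to 1$. You additionally spell out the checks the paper leaves implicit, including uniqueness of $\unit_\Gamma$ via the pullback square with apex $1$, and you correctly follow the diagram ($\unit : 1 \to \Tm$) rather than the swapped names in the definition's text.
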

\begin{proof}
  We can define the elementary $\Unit$-type by
  precomposing the algebraic $\Unit$-type with
  the unique map to the terminal object.
\end{proof}

\begin{proposition}[\done{https://github.com/sinhp/HoTTLean/blob/TYPES2026/HoTTLean/Model/Structured/StructuredUniverse.lean\#L802}] \label{thm:algebraic-sigma-to-elementary-sigma}
  If $\tp$ is an algebraic universe,
  algebraic $\Sigma$-types on $\tp$
  give rise to elementary $\Sigma$-types on $\tp$.
\end{proposition}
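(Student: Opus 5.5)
The plan is to reverse the construction in \Cref{prop:elementary-sigma-to-algebraic-sigma}, using the universal property of polynomial functors (\Cref{prop:poly-up}) to read off the elementary operations by precomposition.

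\textbf{Formation and pairing.} Given $A : \Gamma \to \Ty$ and $B : \Gamma . A \to \Ty$, \Cref{prop:poly-up} applied to $P_\tp$ yields a map $(A,B) : \Gamma \to P_\tp \Ty$; here we identify the pullback $A \times_\Ty \tp$ with the chosen $\Gamma . A$. Define $\Sigma_A B := (A,B) \gg \Sigma$. Next, given $a$ and $b$ with $a \gg \tp = A$ and $b \gg \tp = \id_\Gamma . a \gg B$, we build a map $(A,B,a,b) : \Gamma \to \compDom$. First, $(A,B)$ together with $a$ gives a map $\Gamma \to \fstProj \times_\Ty \tp$. By the factorization in \Cref{prop:poly-up}, its composite with $\sndProj$ is $\id_\Gamma . a \gg B$. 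The condition on $b$ then gives the second pullback leg, landing in $\compDom$. Set $\pair(a,b) := (A,B,a,b) \gg \pair$. Commutativity of the algebraic square gives $\pair(a,b) \gg \tp = (A,B) \gg \Sigma = \Sigma_A B$, since $(A,B,a,b) \gg (\tp \pcomp \tp) = (A,B)$.

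\textbf{Projections and rules.} For $s$ with $s \gg \tp = \Sigma_A B$, the pair $(A,B)$ and $s$ forms a cone over the pullback square. This gives a unique lift $\Gamma \to \compDom$, which decomposes as $(A,B,\fst\, s,\snd\, s)$. The typing equations for $\fst\, s$ and $\snd\, s$ come out of this decomposition exactly as in the forward direction. The $\beta$-rules follow from uniqueness of the lift for the cone $((A,B),\pair(a,b))$: the map $(A,B,a,b)$ is such a lift. The $\eta$-rule is the defining property of the lift. In other words, the pullback condition says precisely that $(a,b)\mapsto \pair(a,b)$ is a bijection onto terms of $\Sigma_A B$.

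\textbf{Stability.} Stability of $\Sigma_A B$ and $\pair$ requires the naturality in $\Gamma$ of the bijection in \Cref{prop:poly-up}. Concretely, we need
\[\sigma \gg (A,B) = (\sigma \gg A, \tilde\sigma \gg B),\]
and similarly for the four-component maps into $\compDom$. I expect this to be the main obstacle. Naturality is stated abstractly for $\fst(t) \times_B f$, so we must check that the induced map $\sigma \times_\Ty \tp$ agrees with $\tilde\sigma$ under the identification with the chosen context extensions. I would verify this by the universal property of the chosen pullback $\Gamma . A$: both maps have the same composites with $d_A$ and $\var_A$. For $\compDom$ the same check applies to the pullback along $\sndProj$, which reduces to $\tilde\sigma$ intertwining $\id_\Delta . (\sigma \gg a)$ and $\id_\Gamma . a$. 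Once this is done, stability follows by associativity of composition, and stability of $\fst$ and $\snd$ is automatic by the earlier lemma.
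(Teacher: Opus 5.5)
Your proposal is correct and follows the same route as the paper. You obtain $\Sigma_A B$ and $\pair(a,b)$ by precomposing the algebraic $\Sigma$ and $\pair$ with the maps $(A,B) : \Gamma \to P_\tp \Ty$ and $(A,B,a,b) : \Gamma \to \compDom$ supplied by \Cref{prop:poly-up}, and you read off $\fst$, $\snd$ and the $\beta$/$\eta$-rules from the pullback square, mirroring \Cref{prop:elementary-sigma-to-algebraic-sigma}. The paper only sketches the formation step, so your check that $\sigma \times_\Ty \tp$ agrees with $\tilde\sigma$ under the identification with chosen context extensions (which yields stability through naturality of the bijection) fills in what that sketch leaves implicit.
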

\begin{proof}
  For a pair $A : \Gamma \to \Ty$ and $B : \Gamma . A \to \Ty$,
  we can define the elementary $\Sigma$-type by
  composing $(A,B) : \Gamma \to P_\tp \Ty$
  with the algebraic $\Sigma$-type $P_\tp \Ty \to \Ty$.
\end{proof}

\begin{proposition}[\done{https://github.com/sinhp/HoTTLean/blob/TYPES2026/HoTTLean/Model/Structured/StructuredUniverse.lean\#L556}] \label{thm:algebraic-pi-to-elementary-pi}
  If $\tp$ is an algebraic universe,
  algebraic $\Pi$-types on $\tp$
  give rise to elementary $\Pi$-types on $\tp$.
\end{proposition}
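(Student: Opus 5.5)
We define the elementary $\Pi$-type structure by composing with the algebraic data, using the universal property of \Cref{prop:poly-up} to move between maps into $P_\tp X$ and pairs. Given $A : \Gamma \to \Ty$ and $B : \Gamma . A \to \Ty$, the pair $(A,B)$ determines a map $(A,B) : \Gamma \to P_\tp \Ty$. Here we identify $A \times_\Ty \tp$ with the chosen pullback $\Gamma . A$, which is harmless up to the canonical isomorphism. We set
\[ \Pi_A B := (A,B) \gg \Pi . \]
Similarly, for $b : \Gamma . A \to \Tm$ with $b \gg \tp = B$, we set
\[ \lam \, b := (A,b) \gg \lam_{\mathrm{alg}} , \]
where $\lam_{\mathrm{alg}} : P_\tp \Tm \to \Tm$ denotes the algebraic structure map.

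For typing, naturality of the bijection in \Cref{prop:poly-up} in the variable $X$ gives $(A,b) \gg P_\tp \tp = (A, b \gg \tp) = (A,B)$. Commutativity of the pullback square then yields
\[ \lam\, b \gg \tp = (A,B) \gg \Pi = \Pi_A B . \]

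Stability under substitution comes from naturality of the bijection in $\Gamma$. For $\sigma : \Delta \to \Gamma$, precomposition satisfies $\sigma \gg (A,B) = (\sigma \gg A, \tilde\sigma \gg B)$. Indeed, $\fst(\sigma \gg (A,B)) = \sigma \gg A$, and $\snd$ is precomposed with the induced map $\sigma \times_\Ty \tp$, which under the identification with chosen pullbacks is exactly $\tilde\sigma$. This gives both
\[ \sigma \gg \Pi_A B = \Pi_{\sigma\gg A}(\tilde\sigma \gg B) \quad\text{and}\quad \sigma \gg \lam\, b = \lam(\tilde\sigma \gg b) . \]

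For $\unlam$, take $f : \Gamma \to \Tm$ with $f \gg \tp = \Pi_A B$. Then $(f,(A,B))$ is a cone over the pullback square, so it induces a unique $g : \Gamma \to P_\tp \Tm$ with $g \gg \lam_{\mathrm{alg}} = f$ and $g \gg P_\tp\tp = (A,B)$. Decompose $g$ via \Cref{prop:poly-up}. Its first component is $g \gg \fstProj = A$, since $P_\tp \tp$ commutes with $\fstProj$ (it is $X_!$ of a morphism in $\RR(\Ty)$). So $g = (A, b')$ for some $b' : \Gamma . A \to \Tm$, and we define $\unlam f := b'$. The equation $g \gg P_\tp \tp = (A,B)$ gives $b' \gg \tp = B$. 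The $\beta$- and $\eta$-rules follow from uniqueness of lifts into the pullback:
\begin{itemize}
\item $\lam(\unlam f) = f$ holds by construction.
\item $\unlam(\lam\, b) = b$ holds because $(A,b)$ is itself a lift of the cone $(\lam\, b, (A,B))$.
\end{itemize}

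The main obstacle is the bookkeeping in the substitution step. We must check carefully that the map $\sigma \times_\Ty \tp$ produced by naturality of \Cref{prop:poly-up} agrees with $\tilde\sigma$ after transporting along the canonical isomorphisms between the abstract pullback $\fst(t)\times_\Ty \tp$ and the chosen context extensions. We would handle this by fixing the pullback in \Cref{prop:poly-up} to be the chosen one $\Gamma . A$, so that both maps are determined by the same two projections $d_{\sigma\gg A}\gg\sigma$ and $\var_{\sigma\gg A}$. The rest is routine.
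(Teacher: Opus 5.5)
Your proposal is correct and follows the same route as the paper. The paper defines $\Pi_A B$ (and, implicitly, $\lam$) by precomposing the algebraic structure maps with the map $(A,B) : \Gamma \to P_\tp \Ty$ given by \Cref{prop:poly-up}, and it only sketches the rest as the converse of \Cref{prop:elementary-pi-to-algebraic-pi}. Your arguments — stability via naturality in $\Gamma$, and $\unlam$ with the $\beta$- and $\eta$-rules via uniqueness of lifts into the pullback square — supply exactly the details that sketch leaves implicit.
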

\begin{proof}
  For a pair $A : \Gamma \to \Ty$ and $B : \Gamma . A \to \Ty$,
  we can define the elementary $\Pi$-type by
  composing $(A,B) : \Gamma \to P_\tp \Ty$
  with the algebraic $\Pi$-type $P_\tp \Ty \to \Ty$.
\end{proof}

\begin{proposition} \label{prop:algebraic-id-to-elementary-id}
  If $\tp$ is an algebraic universe,
  algebraic $\Id$-types on $\tp$
  give rise to elementary $\Id$-types on $\tp$.
  If the weak pullback structure provided is coherent,
  then $j$ is stable under substitution.
\end{proposition}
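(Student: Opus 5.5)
The plan is to reverse the construction in \Cref{prop:elementary-id-to-algebraic-id}, precomposing the algebraic data with maps classified by the universal properties of context extension and of polynomial functors (\Cref{prop:poly-up}).

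\emph{Formation and introduction.} Given $A : \Gamma \to \Ty$ and terms $a_0, a_1 : \Gamma \to \Tm$ with $a_i \gg \tp = A$, the pullback $\Tm . \tp$ (the chosen context extension of $\tp$ along $\tp$) yields a map $\langle a_0, a_1 \rangle : \Gamma \to \Tm . \tp$. We set
\[ \Id_A(a_0,a_1) := \langle a_0,a_1\rangle \gg \Id \quad\text{and}\quad \refl\, a := a \gg \refl. \]
Stability under substitution holds because $\sigma \gg \langle a_0,a_1\rangle = \langle \sigma \gg a_0, \sigma \gg a_1\rangle$, which follows from uniqueness of maps into a pullback. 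Typing of $\refl$ follows from $\langle a,a\rangle = a \gg \delta$ together with the commutative square $\refl \gg \tp = \delta \gg \Id$.

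\emph{Elimination.} Fix $a$, a motive $C : \Gamma .(x:A).\Id_A(a,x) \to \Ty$, and $c_\refl$ with $c_\refl \gg \tp = \rho_a \gg C$. The key identification is the one already used in \Cref{prop:elementary-id-to-algebraic-id}: the stacked pullback diagram exhibits $\Gamma . (x:A) . \Id_A(a,x)$ as a pullback $a \times_\Tm i$, and $\rho_a$ as $a \times_\Tm \rho$. One should check that $\Id_A(a,x)$ is precisely $\Gamma.(x:A) \to \Tm.\tp \xrightarrow{\Id} \Ty$, so that the pasting lemma applies; the chosen context extensions then differ from the pullback used in \Cref{prop:poly-up} by a canonical isomorphism, which we transport along.

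By \Cref{prop:poly-up}, the pair $(a, C)$ defines a map $\Gamma \to P_i\Ty$, and $(a, c_\refl)$ defines a map $\Gamma \to \Tm \times \Tm$. By the computation rules of \Cref{prop:vertical-nat-trans-computation}, namely that $\fst$ is preserved and $\snd$ is precomposed with $a \times_\Tm \rho$, these two maps form a cone over the naturality square for $v$, precisely because $c_\refl \gg \tp = \rho_a \gg C$. The weak pullback structure supplies a lift $l_\Gamma : \Gamma \to P_i\Tm$. Decomposing it via \Cref{prop:poly-up} gives $\fst(l_\Gamma) = a$ and a map $c : a\times_\Tm i \to \Tm$ satisfying $c \gg \tp = C$ (from the left leg, since $\snd$ is natural in $X$) and $\rho_a \gg c = c_\refl$ (from the top leg, via \Cref{prop:vertical-nat-trans-computation}). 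We define $j(C,c_\refl) := c$.

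\emph{Stability.} Assume the weak pullback structure is coherent. For $\sigma : \Delta \to \Gamma$, the substituted data $(\sigma \gg a, \tilde\sigma \gg C, \sigma \gg c_\refl)$ corresponds, by naturality of the bijection in \Cref{prop:poly-up} in $\Gamma$, to the cone precomposed with $\sigma$. Hence $\sigma$ is a cone morphism, and coherence gives $\sigma \gg l_\Gamma = l_\Delta$. Unpacking with \Cref{prop:poly-up} once more yields $\tilde\sigma \gg j(C,c_\refl) = j(\tilde\sigma \gg C, \sigma \gg c_\refl)$, where $\tilde\sigma$ is the induced map $\Delta.(x:\sigma\gg A).\Id \to \Gamma.(x:A).\Id$.

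The main obstacle is this bookkeeping: verifying that $\tilde\sigma$ coincides, under the identifications with chosen pullbacks, with the map $\sigma \times_\Tm i$ arising from naturality in \Cref{prop:poly-up}. This again reduces to uniqueness of maps into pullbacks, but it requires care with the canonical isomorphisms introduced in the elimination step.
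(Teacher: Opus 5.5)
Your proposal is correct and follows essentially the same route as the paper. You define $\Id$ and $\refl$ by precomposition, identify $\Gamma.(x:A).\Id_A(a,x)$ and $\rho_a$ with $a\times_\Tm i$ and $a\times_\Tm\rho$, build the cone from \Cref{prop:poly-up} and \Cref{prop:vertical-nat-trans-computation}, and read $j$ off the weak-pullback lift, just as the paper does; your cone-morphism argument for stability of $j$ under coherence spells out what the paper only asserts.
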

\begin{proof}
Fix $A:\Gamma\to \Ty$ and $a: \Gamma\to \Tm$ such that
$a \gg \tp = A$.
\begin{enumerate}
    \item
      For any $b : \Gamma \to \Tm$ such that $b \gg A = \tp$,
      Define $\Id_A(a,b): \Gamma\to \Ty$ as the composition:
      \[
      \begin{tikzcd}
        \Gamma \ar[r,"a.b"]& \Tm . \tp \ar[r,"\Id"] & \Ty_1
      \end{tikzcd}
      \]
    \item
      Stability of $\Id_A(a,b)$ under substitution
      follows from uniqueness of maps into pullbacks
      \[ \Id_{\sigma \gg A}(\sigma \gg a, \sigma \gg b)
      = ((\sigma \gg a) . (\sigma \gg b)) \gg \Id
      = \sigma \gg (a . b) \gg \Id
      \]
    \item
      The reflexive path constructor $\refl_a$ is the composition
    \[
    \begin{tikzcd}
        \Gamma \ar[r,"{a}"]& \Tm\ar[r,"\refl"] & \Tm
    \end{tikzcd}
    \]
      The term $\refl_a$ has the correct type by the commutativity of 
    \[
    \begin{tikzcd}
        & \Tm\ar[r,"\refl"]\ar[d,"\delta"] & \Tm\ar[d,"\tp"] \\
        \Gamma \ar[ur,"{a}"] \ar[r,"{a\times_\Tm a}", swap]& \Tm\times \Tm\ar[r,"\Id", swap] & \Ty
    \end{tikzcd}
    \]
    \item
      Stability of $\refl_a$ under substitution is obvious.
    \item
      Suppose $C : \Gamma .(x : A) . \Id_A(a,x) \to \Ty$
      is a motive with $c_\refl : \Gamma \to \Tm$ satisfying
      \[ c_\refl \gg \tp = \rho_a \gg C\]
      We can first recognize the elementary
      gadgets as pullbacks of their algebraic counterparts.
    \[\begin{tikzcd}
	\Gamma & \Tm \\
	{\Gamma . (x : A) . \Id(a,x)} & {\Tm . \tp . \Id} \\
	{\Gamma.(x : A)} & {\Tm . \tp} \\
	\Gamma & \Tm
	\arrow["a", from=1-1, to=1-2]
	\arrow["{\rho_a}"', from=1-1, to=2-1]
	\arrow["\lrcorner"{anchor=center, pos=0.125}, draw=none, from=1-1, to=2-2]
	\arrow["\rho", from=1-2, to=2-2]
	\arrow[from=2-1, to=2-2]
	\arrow["{d_{\Id_A(a,x)}}"', from=2-1, to=3-1]
	\arrow["\lrcorner"{anchor=center, pos=0.125}, draw=none, from=2-1, to=3-2]
	\arrow["{d_\Id}", from=2-2, to=3-2]
	\arrow[from=3-1, to=3-2]
	\arrow["{d_A}"', from=3-1, to=4-1]
	\arrow["\lrcorner"{anchor=center, pos=0.125}, draw=none, from=3-1, to=4-2]
	\arrow["{d_\tp}", from=3-2, to=4-2]
	\arrow["a"', from=4-1, to=4-2]
    \end{tikzcd}\]
      By the universal property of $P_i X$ in \Cref{prop:poly-up},
      a pair of maps $a : \Gamma \to \Tm$
      and $\gamma : \Gamma . (x : A) . \Id_A (a,x) \to X$
      \[\begin{tikzcd}
	X & {\Gamma . (x : A) . \Id(a,x)} & {\Tm . \tp . \Id} \\
	& {\Gamma.(x : A)} & {\Tm . \tp} \\
	& \Gamma & \Tm
	\arrow["\gamma"', from=1-2, to=1-1]
	\arrow[from=1-2, to=1-3]
	\arrow["{d_{\Id_A(a,x)}}"', from=1-2, to=2-2]
	\arrow["\lrcorner"{anchor=center, pos=0.125}, draw=none, from=1-2, to=2-3]
	\arrow["{d_\Id}", from=1-3, to=2-3]
	\arrow["i", bend left = 60, from=1-3, to=3-3]
	\arrow[from=2-2, to=2-3]
	\arrow["{d_A}"', from=2-2, to=3-2]
	\arrow["\lrcorner"{anchor=center, pos=0.125}, draw=none, from=2-2, to=3-3]
	\arrow["{d_\tp}", from=2-3, to=3-3]
	\arrow["a"', from=3-2, to=3-3]
\end{tikzcd}\]
      is equivalent to a morphism
      $(a, \gamma) : \Gamma \to P_i X$.
      Then, using the computation of the
      natural transformation $v$ in
      \Cref{prop:vertical-nat-trans-computation} we
      make a cone for the weak pullback diagram
    \[\begin{tikzcd}
	\Gamma & {\Tm \times \Tm} \\
	{P_i\Ty} & {\Tm \times \Ty}
	\arrow["{(a, c_\refl)}", from=1-1, to=1-2]
	\arrow["{(a,C)}"', from=1-1, to=2-1]
	\arrow["{(a, \rho_a\gg C)}"{description}, from=1-1, to=2-2]
	\arrow["{{\Tm \times \tp}}", from=1-2, to=2-2]
	\arrow["{{v_\Ty}}"', from=2-1, to=2-2]
    \end{tikzcd}\]
      Producing a (coherent) lift (see 
      \Cref{def:algebraic-id-types} part (3))
      \[(a,j) : \Gamma \to P_i \Tm\]
      by the weak pullback property.
      By \Cref{prop:vertical-nat-trans-computation},
      we obtain
      \[j : \Gamma . (x : A) . \Id_A (a,x) \to \Tm\]
      satisfying $j \gg \tp = C$ and
      $\id_\Gamma . a . \refl \, a \gg j (C, c_\refl) = c_\refl$.
  \item 
    If the weak pullback structure is coherent then
    stability of $j$ under substitution follows.
    Again,
    we can always correct the weak pullback structure to
    a coherent one by \Cref{lem:coherent-weak-pullback}.
\end{enumerate}
\end{proof}

\subsection{Extracting algebraic type formers from a \foreignlanguage{greek}{π}-clan}

Yet another kind of translation is provided in \cite{awodey2018},
where a natural model $\pi : \DD_1 \to \DD_0$
is constructed from a $\pi$-clan
(or a \emph{closed, stable class of maps} in \cite{awodey2018}).
This construction, referred to as \emph{strictification},
constructs a left adjoint of the inclusion functor from the category of pseudofunctors to that of strict functors. 
It converts the pseudofunctor characterized by a $\pi$-clan $(\CC,\RR)$
\[\RR(-) : \CC^\op \to \Cat\]
into an equivalent (strict) {functor} $\CC^\op \to \Cat$.
In \cite[Proposition 28]{awodey2018},
the constructed natural model is shown to have
$\Sigma$- and $\Pi$-types.
With the further assumption of the $\pi$-clan being a ``$\pi$-tribe'' \cite{joyal2017}
(or ``factorizing'' in \cite{awodey2018}),
the constructed natural model is shown to also have $\Id$-types \cite[Proposition 30]{awodey2018}.
To ensure stability of type formers under substitution,
both proofs rely on the full structure of $\pi$-clan,
including pushforwards.

A similar construction is used in \cite[Theorem 2.3.4.]{kapulkin2021},
where a strict model of MLTT in simplicial sets is constructed by
combining a universe with certain closure conditions
on the maps classified by the universe.

To find an analog of \cite[Proposition 28]{awodey2018} and
\cite[Theorem 2.3.4.]{kapulkin2021} in our setting,
we look for sufficient preclan conditions for 
having the structure of algebraic type formers on a universe.
This can also be thought of as the converse of
\Cref{thm:elementary-universe-to-algebraic-universe}.
Unlike \cite[Proposition 28]{awodey2018},
\Cref{prop:principal-pi-clan-to-algebraic}
will not carry with it a ``strictification'' result:
we assume a universe $\tp$ as a hypothesis,
which already provides a strict interpretation of types.

\begin{proposition}
  \label{prop:principal-pi-clan-to-algebraic}
  Suppose $(\CC,\RR)$ is a $\pi$-clan and $\tp : \Tm \to \Ty$
  is a $(\CC,\RR)$-algebraic universe
  (\Cref{def:algebraic-universe}).
  Let us use $\RR_\tp$ to denote the principal class
  generated by $\tp$ (\Cref{def:principal-class}).
  \begin{itemize}
    \item If $\CC$ has a terminal object and $\RR_\tp$ contains all
      isomorphisms, then there exists\footnote{
        Unfortunately,
        the condition of having a pullback square in the 
        definition of $\RR_\tp$ is merely propositional;
        one could rephrase the theorem more constructively.
      } an algebraic $\Unit$-type structure on $\tp$
      (as a $(\CC,\RR)$-algebraic universe).
    \item If $\RR_\tp$ is closed under composition,
      then there exists an algebraic $\Sigma$-type structure on $\tp$.
    \item If $\RR_\tp$ is closed under pushforward,
      then there exists an algebraic $\Pi$-type structure on $\tp$.
  \end{itemize}
  Combined, if $\RR_\tp$ is a $\pi$-preclan, then $\tp$ models $\Unit$-,
  $\Sigma$-, and $\Pi$-types.
\end{proposition}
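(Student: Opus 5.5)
The plan is to reduce each bullet to exhibiting a single pullback square, by finding a particular map in $\RR_\tp$ and choosing a classifying square for it. In each case the relevant map is a universal family living over the domain of the desired type former.

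\textbf{$\Unit$-types.} The identity $\id_1 : 1 \to 1$ is an isomorphism, so it lies in $\RR_\tp$. Choose a pullback square exhibiting $\id_1$ as a pullback of $\tp$ along some $\Unit : 1 \to \Ty$, with top map $\unit : 1 \to \Tm$. This square, with left edge $\id_1$, is literally the square required in the definition of algebraic $\Unit$-types. The only subtlety is that the definition asks merely for commutativity, which this square certainly has.

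\textbf{$\Sigma$-types.} The polynomial composition $\tp \pcomp \tp : \compDom \to P_\tp \Ty$ of \Cref{def:pcomp} is a composite of two maps. The first, $\compDom \to \fstProj \times_\Ty \tp$, is a pullback of $\tp$ along $\sndProj$. The second, $\fstProj \times_\Ty \tp \to P_\tp \Ty$, is a pullback of $\tp$ along $\fstProj$. Both maps therefore lie in $\RR_\tp$, and closure of $\RR_\tp$ under composition puts $\tp \pcomp \tp$ in $\RR_\tp$. A chosen pullback square exhibiting this gives maps $\Sigma : P_\tp\Ty \to \Ty$ and $\pair : \compDom \to \Tm$ forming exactly the required pullback.

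\textbf{$\Pi$-types.} This is the main step: I must show that $P_\tp \tp : P_\tp \Tm \to P_\tp \Ty$ lies in $\RR_\tp$.

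First, $P_\tp \tp$ is $B_!$ applied to $\tp_* (\Tm^*\tp)$, with $B = \Ty$. Here $\Tm^* \tp$ denotes the map $\Tm\times\Tm \to \Tm\times\Ty$ in $\RR(\Tm)$. Its image under the pushforward is computed, up to isomorphism, by \Cref{lem:pushforward-along-pullback-morphism-action} and \Cref{cor:pushforward-stability}. I apply these with $f = \tp$, $g : \Tm\times\Ty \to \Tm$ the projection, and $h = \Tm\times\tp$, taking $\HH := \RR_\tp$, which is stable under composition with isomorphisms. The corollary then says $\tp_*(h)$ is an $\RR_\tp$-map if and only if $f'_*(\epsilon^* h)$ is one, where $f' : \tp\times_\Ty q \to \tp_*Z$ is the pullback of $\tp$ along $q$.

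Now I check the hypotheses. The map $\epsilon^* h$ is a pullback of $h$, and $h$ is a pullback of $\tp$, so $\epsilon^* h \in \RR_\tp$. The map $f'$ is also in $\RR_\tp$. Closure of $\RR_\tp$ under pushforward, interpreted as: pushing forward an $\RR_\tp$-map along an $\RR_\tp$-map yields an $\RR_\tp$-map, then gives $f'_*(\epsilon^* h) \in \RR_\tp$.

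A technical point is that the pushforward in $\RR_\tp$ and the one in $\RR$ must agree up to isomorphism. This holds because both satisfy the universal property against all of $\CC/X$, so they are uniquely determined up to isomorphism. Finally, $B_!$ does not change the underlying map, so $P_\tp\tp \in \RR_\tp$. A chosen pullback square then provides $\Pi$ and $\lam$.

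Combining the three bullets gives the last sentence of the statement.
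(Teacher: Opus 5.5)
Your proposal is correct and follows the paper's proof step for step. You use $\id_1$ for $\Unit$, and for $\Sigma$ the two pullbacks of $\tp$ composing to $\tp \pcomp \tp$. For $\Pi$ you apply \Cref{cor:pushforward-stability} with $\HH := \RR_\tp$ to $\epsilon^* h$ and $f'$. The differences are minor: you specialize to $h = \Tm \times \tp$ rather than showing that $\tp_*$ preserves $\RR_\tp$-maps in general, and you spell out that the pushforwards computed in $\RR_\tp$ and in $\RR$ agree up to isomorphism, which the paper leaves implicit.
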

\begin{proof}
  ($\Unit$-types.) The case of $\Unit$-types follows from the definition of $\RR_\tp$,
  and the assumption applied to the identity map on the terminal object.

  ($\Sigma$-types.) Suppose that $\RR_\tp$ is closed under composition.
  Consider the diagram defining the polynomial composition
  $\tp \pcomp \tp$ (\Cref{def:pcomp}).
    \[
    \begin{tikzcd}
    \Tm & \compDom \\
    \Ty & \bullet & \Tm \\
    & {P_\tp \Ty} & \Ty
    \arrow["\tp"', from=1-1, to=2-1]
    \arrow[from=1-2, to=1-1]
    \arrow["\lrcorner"{anchor=center, pos=0.125, rotate=-90}, draw=none, from=1-2, to=2-1]
    \arrow["{t_2}"', from=1-2, to=2-2]
    \arrow["{{\tp \pcomp \tp}}"{description, pos=0.2}, shift left=5, bend left = 50, from=1-2, to=3-2]
    \arrow["{{\sndProj}}", from=2-2, to=2-1]
    \arrow[from=2-2, to=2-3]
    \arrow["{t_1}"', from=2-2, to=3-2]
    \arrow["\lrcorner"{anchor=center, pos=0.125}, draw=none, from=2-2, to=3-3]
    \arrow["\tp", from=2-3, to=3-3]
    \arrow["{{\fstProj}}"', from=3-2, to=3-3]
  \end{tikzcd}\]
  By the definition of the principal class,
  the pullbacks $t_1$ and $t_2$ are in $\RR_\tp$.
  Hence their composition $\tp \pcomp \tp$ is also in $\RR_\tp$.
  Thus there exist two maps $\pair: \compDom \to \Tm$ and
  $\Sigma : P_\tp \Ty \to \Ty$,
  forming a pullback square
  \[
  \begin{tikzcd}
      \compDom \ar[d,"\tp \pcomp \tp",swap]\ar[r,"\pair"] & \Tm \ar[d,"\tp"]\\
      P_\tp\Ty \ar[r,swap,"\Sigma"]& \Ty
  \end{tikzcd}
  \]

  ($\Pi$-types.) Suppose $\RR_\tp$ is closed under pushforward.
  Like with the previous case,
  it suffices to show that the polynomial application
  $P_\tp \tp : P_\tp \Tm \to P_\tp \Ty$ is in $\RR_\tp$.
  We unfold the definition of the polynomial functor
  \[ P_\tp X = \Ty_! (\tp_* (\Tm^* X))\]
  Firstly, $h := \Tm^* \tp : \Tm^* \Tm \to \Tm^* \Ty$ in $\RR(\Tm)$
  has an $\RR_\tp$-map as its underlying map in $\CC$,
  since $\RR_\tp$ is stable under pullback.
  Since
  \[P_\tp \tp = \Ty_! (\tp_* h) \]
  is the underlying map of $\tp_* h$,
  it suffices to show in general that $\tp_* : \RR(\Tm) \to \RR(\Ty)$
  preserves $\RR_\tp$-maps between objects in $\RR(\Tm)$.
  Let $g : Z \to \Tm$ be an $\RR$-map,
  and let $h : W \to Z$ be an $\RR_\tp$-map.
  Write $q := \tp_* g$.
  We obtain a diagram like \Cref{lem:pushforward-along-pullback-morphism-action}.
  \[
  \begin{tikzcd}
    Z & {\tp \times_X q} & {\tp_* Z} \\
    & \Tm & \Ty
    \arrow["g"', from=1-1, to=2-2]
    \arrow["{{\epsilon}}"', from=1-2, to=1-1]
    \arrow["{{f'}}", from=1-2, to=1-3]
    \arrow[from=1-2, to=2-2]
    \arrow["\lrcorner"{anchor=center, pos=0.125}, draw=none, from=1-2, to=2-3]
    \arrow["q", from=1-3, to=2-3]
    \arrow["\tp"', from=2-2, to=2-3]
  \end{tikzcd}
  \]
  We apply
  \Cref{cor:pushforward-stability}.
  Firstly, $h : W \to Z$ is an $\RR$-map
  between objects $W$ and $Z$ in $\RR(Y)$,
  since $h \in \RR_\tp \subseteq \RR$ and
  $\RR$ is closed under composition.
  Furthermore, $\RR_\tp$ is stable under pullback,
  so it is stable under
  pre- and post-composition with isomorphisms.
  Hence,
  we have $\tp_* h$ is an $\RR_\tp$-map if and only if  
  the object $f'_* (\epsilon^* h)$ in $\RR(\tp_* Z)$
  has an underlying map in $\RR_\tp$.
  Indeed, $\RR_\tp$ is stable under pullback so
  $\epsilon^* h$ is in $\RR_\tp(\tp \times_X q)$.
  The pullback $f'$ is in $\RR_\tp$
  and $\RR_\tp$ is closed under pushforward,
  thus $f'_* (\epsilon^* h)$ in $\RR(\tp_* Z)$.

\end{proof}

\begin{example}
    
Compared with the current formalized construction, \Cref{prop:principal-pi-clan-to-algebraic}
is closer in presentation to models of 
homotopy type theory (HoTT)
appearing in the literature.
For example, this is the approach to defining the simplicial model
of HoTT in \cite[Theorem 2.3.4]{kapulkin2021},
and the cubical model of HoTT in \cite{awodey2023}.
In both cases a universe is constructed,
the universe classifies a certain class of maps 
(fibrations with small fibers),
and that class of maps is proven to be closed under certain operations.

\end{example}

We take great care in \Cref{prop:principal-pi-clan-to-algebraic}
to not assume too many conditions on $\RR_\tp$.
If $(\CC,\RR_\tp)$ were a clan,
then it would suffer from \textsf{Type : Type} issues.
To address this,
while also being able to express the polynomial $P_\tp \Ty$,
we assumed the presence of a larger $\pi$-clan $(\CC,\RR)$,
with $\RR_\tp \subseteq \RR$.

\begin{example}
  We apply \Cref{prop:principal-pi-clan-to-algebraic}
  to the groupoid model of MLTT \cite{hofmann1995}.
  Here $\CC = \Grpd$ is the category of large groupoids
  (fixing a notion of size) and
  $\RR$ is the class of split isofibrations in $\Grpd$.
  We take $\tp : \Tm \to \Ty$ to be the classifier of small 
  split isofibrations\footnote{
    Explicitly, 
    the classifier can be built by applying the
    Grothendieck construction to the
    inclusions $\mathsf{Core(grpd)} \to \Grpd \to \Cat$.
    Here $\mathsf{Core(grpd)}$ is the core of the
    category of small groupoids.
    A similar construction in $\Cat$ can be found in \cite{hess2007}.
  },
  i.e. split isofibrations with small fibers.
  With a careful treatment of size,
  one can show that the class of small
  split isofibrations $\RR_\tp$
  contains all isomorphisms, is closed under composition,
  and is closed under pushforward.

  In the HoTTLean project \cite{hua2025}, 
  the groupoid model of MLTT \cite{hofmann1995}
  is constructed by defining universes classifying small 
  split isofibrations,
  as described above.
  However, strict type formers on the universe
  are constructed in terms of elementary semantics.
  Although universes are straightforward to define, 
  defining $\Sigma$- and $\Pi$-type structures on
  those universes by hand leads to working with many
  equalities between types,
  which is best avoided if possible
  \cite{zulipthread2023}.
  \Cref{prop:principal-pi-clan-to-algebraic}
  suggests a different approach to defining the groupoid model
  that may help address these problems.
  In future work, we plan to experiment with this different approach.
\end{example}

\section{Conclusion}\label{sec:Concl}
We have presented an approach to defining semantics of MLTT
in $\pi$-clans, called algebraic semantics,
generalizing the LCC analog in the literature.
The key step of this approach is to make polynomial functors available in this setting,
as they are central to defining algebraic type formers.
Once the general theory of polynomial functors in $\pi$-clans is established,
the rest of our approach is analogous to its LCC counterpart,
as described in \cite{awodey2025}.

Furthermore, we validated the correctness of our definition by proving that it models the type formers in the expected way. 
This was achieved by proving that algebraic semantics are 
inter-translatable with an elementary presentation of a strict MLTT
semantics called elementary semantics.
The efficacy of our approach was witnessed by its application in HoTTLean.
In this library,
we defined elementary and algebraic semantics,
formalizing translations between the two, so as to secure the correctness of the elementary definition that we used to define the groupoid model.

\section{Related Work}
\subsection{Polynomial functors}
Weber \cite{weber2015} developed polynomial functors in
the general setting of a category with pullbacks and exponentiable maps.
Aberl\'e and Spivak \cite{aberle2025} study the theory of 
polynomial functors and their application for modeling type 
formers of MLTT in a univalent setting,
with results formalized in Agda.
Our polynomial functor library is based on the Lean library Poly
\cite{2024hazratpour_poly},
which formalizes polynomial functors in Weber's setting.
Uemura \cite{uemura2022} characterized
exponentiable maps as models of a universal one,
and characterizes polynomial endofunctors
in terms of endofunctors equipped with certain natural transformations.

\subsection{Clans}
Much of the theory that underlies our development of polynomial
functors in $\pi$-clans is given by
Taylor \cite{taylor1987} and Joyal \cite{joyal2017}.
The key difference between these clan-based descriptions of MLTT 
semantics and ours is our use of a universe to ensure strict coherence.
More recently,
Frey \cite{frey2025} proved a Gabriel--Ulmer style duality theorem for clans,
rounding out the picture of clans as an internal language with
functorial semantics and theory-model duality.

\subsection{Semantics of MLTT}
MLTT semantics are presented in various styles,
including
display map categories \cite{taylor1987} (or clans),
comprehension categories \cite{jacobs1993},
categories with families \cite{dybjer1997},
and natural models \cite{awodey2018}.
Some of these presentations are compared in \cite{ahrens2018}, in a univalent setting,
but without considering type formers.
A formalization of these equivalences can be found in
the UniMath library \cite{2024grayson}.
Our work also draws from 
Uemura's categories with representable maps \cite{uemura2023},
which provides an analysis of natural models
via Lawvere's \emph{functorial semantics} \cite{lawvere1963}.

\section{Future work}\label{sec:Future}
We have set out the basic theory of polynomial functors in $\pi$-clans
with only the goal of formulating the algebraic semantics of type theory.
The full picture of this theory is of general interest
and deserves further exploration;
a detailed treatment will be provided in the PhD thesis of the
first author.


Whereas our work is already a generalization from LCC categories,
it has the potential of being generalized further. 
Using an idea similar to \cite{weber2015},
where polynomial signatures are taken to be exponential morphisms
in finite limit categories, 
instead of taking
a polynomial signature to be any morphism in a $\pi$-clan, 
one can possibly take it to be a morphism that is
``exponentiable relative to a preclan''.

Uemura \cite{uemura2022} characterized polynomial functors
and exponentiability using the language of
\emph{generalized algebraic theories} in finite limit categories. 
This could potentially be reformulated in terms of preclans.
Furthermore,
we would like to reformulate Uemura's 
analysis of natural models \cite{uemura2023}
in the style of Lawvere's functorial semantics
\cite{lawvere1963},
producing a counterpart in terms of our algebraic models.


\bibliography{main.bib}
\bibliographystyle{alpha}
\end{document}